\title{Part II, Free Actions of Compact \\ Groups on C\Star Algebras}
\author{Kay Schwieger \and Stefan Wagner}	% see also after \begin{document}
\date{}
	\newlist{equivalence}{enumerate}{1}
	\setlist[equivalence]{label=\textnormal{(}\alph*\textnormal{)}}
\theoremstyle{plain}
	\newtheorem{thm}{Theorem}[section]
	\newtheorem{lemma}[thm]{Lemma}
	\newtheorem{cor}[thm]{Corollary}
\theoremstyle{definition}
	\newtheorem{defn}[thm]{Definition}
	\newtheorem{rmk}[thm]{Remark}
	\newtheorem{expl}[thm]{Example}
\theoremstyle{remark}
\newcommand*{\N}{\mathbb N}		% natural numbers
\newcommand*{\Z}{\mathbb Z}		% integers
\newcommand*{\C}{\mathbb C}		% complex numbers
\newcommand*{\one}{\mathbbm 1}		% unit of an algebra
\newcommand*{\tensor}{\otimes}		% tensor product
\DeclarePairedDelimiter{\scal}{\langle}{\rangle}	% scalar product
\DeclarePairedDelimiter{\norm}{\lVert}{\rVert} 	% norm
\DeclareMathOperator{\Tr}{Tr}		% unnormalized trace
\DeclareMathOperator{\id}{id}		% identity operator
\DeclareMathOperator{\SU}{SU}
\newcommand{\cf}{\mbox{cf.}\xspace}			% confer 
\newcommand{\eg}{\mbox{e.\,g.}\xspace}			% exempli gratia
\newcommand*{\ie}{\mbox{i.\,e.}\xspace}			% id est
\newcommand*{\ndash}{\nobreakdash-}
\newcommand*{\hilb}{\mathfrak}		% style for Hilbert spaces
\newcommand*{\alg}{\mathcal}		% style for algebras
\newcommand*{\End}{\mathcal L}		% bounded operators on a Hilbert space
\newcommand*{\Star}{$^*$\ndash}
\newcommand*{\aA}{\alg A}		% standard C*-algebra
\newcommand*{\aB}{\alg B}
\DeclareMathOperator{\Ad}{Ad}		% adjunction
\newcommand*{\lscal}[2]{\scal*{#2}_{#1}}
\newcommand*{\Cont}{C}
\begin{document}

\author{
	Kay Schwieger \thanks{
		University of Helsinki, 
		\href{mailto:kay.schwieger@helsinki.fi}{\nolinkurl{kay.schwieger@helsinki.fi}}
	} \and 
	Stefan Wagner \thanks{
		Blekinge Tekniska H\"ogskola,
		\href{mailto:stefan.wagner@bth.se}{\nolinkurl{stefan.wagner@bth.se}}
	}
}
\maketitle

\begin{abstract}
	\noindent
	We study a simple class of free actions of non-Abelian groups on unital C\Star al\-ge\-bras, namely cleft actions. These are characterized by the fact that the associated noncommutative vector bundles are trivial. In particular, we provide a complete classification theory for these actions and describe its relations to classical principal bundles.

	\vspace*{0,5cm}
	
	\noindent
	Keyword: Weakly cleft action, C$^*$-algebra, factor system, cocycle action

	\noindent
	MSC2010:
	46L85, 37B05 (primary),
	55R10, 16D70 (secondary).
\end{abstract}
%\listoffixmes
%\listofchanges

\section{Introduction}

In this presentation we investigate a special class of group actions on unital C\Star algebras. The experience with group actions on topological spaces shows that free actions are easier to understand and to classify. In this context a free action of a group $G$ on a space $P$ is typically regarded as a topological principal $G$-bundle over the quotient $P/G$. For instance, locally trivial bundles are, up to equivalence, characterized by the \v{C}ech cohomology $H^1(X,G)$. It is therefore reasonable to expect that free actions on noncommutative spaces are easier to understand and classify, too. 

In the first part of this series \cite{SchWa15} we investigated free actions of compact Abelian groups on unital C\Star algebras. This classification relies on the fact that the corresponding isotypic components are Morita self-equivalence over the fixed point algebra. For non-Abelian compact groups the bimodule structure is more subtle.  For this reason the current article concentrates on a simple class of free actions of non-Abelian groups, namely cleft actions. Regarded as noncommutative principal bundles, these actions are characterized by the fact that all associated noncommutative vector bundles are trivial. In particular, for such actions all Chern classes (\cf \cite{Bre04}) vanish. Although this property looks very limiting, in fact, many noncommutative phenomena already show up here. Therefore, cleft actions may be used as toy models of noncommutative principal bundles, \eg for Chern-Simmons actions (see \cite{Wi89,Pfa14}).

The earliest classification result for free actions of compact, but not necessarily Abelian groups goes back to Wassermann~\cite{Wass88}. He showed that free and ergodic actions, \ie, actions with full multiplicity and trivial fixed point space, are, up to equivalence, characterized by unitary 2-cocycles on the dual of the group. 
For finite groups Davydov~\cite{Da01} presented an alternative classification using classical group cohomology. 
%Unfortunately, finding these 2-cocycles or the corresponding free and ergodic actions is a highly nontrivial task.  
Although free and ergodic actions only correspond to principal bundles over the singleton base space, these result yet show that noncommutative geometry admits more interesting examples than the classical theory.

The prototypes of free and ergodic actions are the quantum 2-tori $\aA_\theta$, $\theta \in \mathbb T$, equipped with the gauge action of $\mathbb T^2$. Varying $\theta \in \mathbb T$, these are in fact all non-equivalent free ergodic actions of the 2-torus $\mathbb T^2$. Non-ergodic examples can then be obtained by taking continuous bundles of ergodic actions. A~prominent example of this type is the gauge action of the 2-torus $\mathbb T^2$ on the Heisenberg group C\Star algebra $\aA$ (see \cite{EchNeOy09}). The algebra $\aA$ can be written as C\Star-bundle over~$\mathbb T$, where for each $\theta \in \mathbb T$ the fiber is the noncommutative 2-tori $\aA_\theta$.  The action of $\mathbb T^2$ on the algebra $\aA$ is the fiberwise gauge action. Echterhoff, Nest, and Oyono-Oyono \cite{EchNeOy09} proposed such bundles as noncommutative principal torus bundles. 
Concerning non-Abelian groups, the classification results of Wassermann can easily be extended to bundles of free ergodic actions. Such bundle actions are up to equivalence determined by the continuous family of unitary 2-cocycles corresponding to the ergodic actions in each fiber. 

Beyond the concept of bundles of ergodic actions there are multiple directions to introduce further noncommutativity. Any C\Star algebra obtained by forming a bundle of ergodic actions over a compact base space $X$ always contains $\Cont(X)$ in its center. In order to explore noncommutative principal bundles over a noncommutative base space, this requirement must be abolished. Without this restriction new examples are immediately available even for a classical base space. For instance, given a coaction $\gamma$ of $G$ on $X$, the crossed product $\Cont(X) \rtimes_\gamma \hat G$ with the dual action of $G$ is cleft (\cf Example  \ref{expl:twisted actions}) but $\Cont(X)$ is central only if the coaction is inner, \ie, if it is trivial up to a 1-cocycle (\cf \mbox{Example}~\ref{expl:SU_2}).

A second direction to explore is to not only consider actions of classical groups but of quantum groups. An algebraic approach to cleft actions of quantum groups (alias Hopf algebras) is already established in the theory of Hopf-Galois extensions (see \eg \cite{YuMi86, Schau04}). There cleft actions are free actions with a convolution invertible cleaving map. Doi \cite{Doi89} has shown that cleft actions can be written as a twisted crossed product and provided a classification for these crossed products. Also the work of Vaes and Vainerman \cite{VaVa03} should be mentioned, who studied cleft actions of locally compact Hopf von Neumann algebras. 
An essential part of our presentation will lift the algebraic constructions to the C\Star algebraic framework. In this article we restricts ourselves to classical compact groups for sake of a simple presentation and because most essential problems are already present in this restricted context. 

The article is structured as follows. After this introduction and some preliminaries, we recall the basic decomposition of a C\Star dynamical system into (generalized) isotypic components in Section~\ref{sec:decomp}. In Section~\ref{sec:cleft} we introduce cleft C\Star dynamical systems and a weaker notion, called \emph{weakly cleft}, and discuss their relations. Section~\ref{sec:factor_sys} presents the characterization of weakly cleft systems in terms of factor systems, alias cocycle actions. Moreover, we discuss some relations between the type of the dynamical system and the form of its factor system. In~particular, we classify all cleft topological (classical) principal bundles. Finally, in Section~\ref{sec:construction} we show that factor systems and weakly cleft C\Star dynamical systems are, up to equivalence, indeed in 1-1-correspondence by explicitly constructing the dynamical \mbox{system}.

\section{Preliminaries and Notations}
\label{sec:Prel}

Once and for the rest of the paper we fix a compact group $G$. All integrals over $G$ are taken with respect to the Haar probability measure. By a representation of $G$ we always mean a finite-dimensional unitary representation. For a representation $(\pi,V)$ we write $d_\pi$ for its dimension, for the dual representation we write $(\bar\pi, \bar V)$. 
The set of equivalence classes of irreducible representations will be denoted by $\hat G$. All our constructions behave naturally with respect to intertwiners and hence, for sake of brevity, we do not distinguish between a representation of $G$ and its equivalence class. 

Let $\aA$ be a unital C\Star algebra. For the unit of $\aA$ we write $\one_\aA$ or simply $\one$. For an element $u \in \aA$ we denote by $\Ad[u]:\aA \to \aA$ the map $x \mapsto u x u^*$.  All tensor products of C\Star algebras are taken with respect to the minimal tensor product. We will frequently deal with multiple tensor products of unital C\Star algebras $\aA$, $\aB$, and $\alg C$. If there is no ambiguity, we regard $\aA$, $\aB$, and $\alg C$ as subalgebras of $\aA \tensor \aB \tensor \alg C$ and extend maps on $\aA$, $\aB$, or $\alg C$ canonically by tensoring with the identity map. For sake of clarity we may occasionally use the leg numbering notation, \eg , for $x \in \aA \tensor \alg C$ we write $x_{13}$ to denote the corresponding element in \mbox{$\aA \tensor \aB \tensor \alg C$}. %, and for a \Star homomorphism $\varphi$ on $\aA \tensor \alg C$ we denote by $\varphi^{(13)}$ its extension to $\aA \tensor \aB \tensor \alg C$ that acts trivially on $\one_{\alg A} \tensor \aB \tensor \one_{\alg C}$.

Our main focus in this paper will be on C\Star dynamical systems, by which we mean triples $(\aA, G, \alpha)$ consisting of a unital C\Star algebra $\aA$ together with a group of \Star automorphisms $\alpha_g:\aA \to \aA$, $g \in G$, such that for each $x \in \aA$ the map $g \mapsto \alpha_g(x)$ is continuous. We typically write $\aB = \aA^G$ for the fixed point algebra of the dynamical system and we denote by $P_0$ the associated the conditional expectation $P_0(x) := \int_G \alpha_g(x) \; dg$, $x \in \aA$. 
More general, for an irreducible representation $\pi \in \hat G$ we denote by $P_\pi:\aA \to \aA$ the $G$-equivariant projection onto the isotypic component $A(\pi):=P_\pi(\aA)$, which is given by
\begin{equation*}
	P_\pi(x) := d_\pi \int_G \Tr(\pi_g^*) \, \alpha_g(x) \; dg, 
	\qquad \qquad
	x \in \aA.
\end{equation*}
Two C\Star dynamical system $(\aA, G, \alpha)$ and $(\aA', G, \alpha')$ are called \emph{equivalent} if there is an isomorphism $\varphi:\aA \to \aA'$ with $\varphi \circ \alpha_g = \alpha'_g \circ \varphi$ for all $g \in G$. 

For the necessary background on modules of C\Star algebras we recommend \cite{Blackadar}, here we only briefly recall some relevant definition. For a C\Star algebra $\aB$ a right pre-Hilbert $\aB$-module is a right $\aB$-module $\hilb H$ equipped with a sesquilinear map $\scal{\cdot, \cdot}_\aB: \hilb H \times \hilb H \to \aB$ that satisfies the usual axioms of a definite inner product with $\aB$-linearity in the second component.%
\footnote{
	In the literature the notion is usually relaxed even further to pre-C\Star algebras and non-definite inner products. But we do not need this more general framework.
} 
We may define a norm on $\hilb H$ by putting $\norm{x}_{\hilb H} = \norm{\scal{x,x}_\aB}^{1/2}$. If $\hilb H$ is complete with respect to this norm then $\hilb H$ is called a right Hilbert $\aB$-module. A~linear operator \mbox{$T:\hilb H \to \hilb H$} on a right Hilbert $\aB$-module $\hilb H$ is called adjointable if there is an operator $T^\star:\hilb H \to \hilb H$ satisfying $\scal{Tx,y}_\aB = \scal{x, T^\star y}_\aB$ for all $x,y \in \hilb H$. Adjointable operators are automatically bounded but the converse does not hold. The set $\End(\hilb H)$ of all adjointable operators on a right Hilbert $\aB$-module is a C\Star algebra. 
A~correspondence over $\aB$, or a right Hilbert $\aB$-bimodule, is a $\aB$-bimodule $\hilb H$ equipped with a $\aB$-valued inner product $\scal{\cdot, \cdot}_\aB$ which turns it into a right Hilbert $\aB$-module such that the left action of $\aB$ on $\hilb H$ is via adjointable operators. For two correspondences $\hilb H$, $\hilb K$ over the same algebra $\aB$ we denote by $\hilb H \tensor_\aB \hilb K$ their tensor product, which is again a correspondence over $\aB$. The elementary tensors \mbox{$x \tensor y$} ($x \in \hilb H$, $y \in \hilb K$) are total in $\hilb H \tensor_\aB \hilb K$. The inner product on $\hilb H \tensor_\aB \hilb K$ is given by
\begin{equation*}
	\scal{x_1 \tensor y_1, \; x_2 \tensor y_2}_\aB = \scal{y_1, \; \scal{x_1,x_2}_\aB \,. \, y_2}_\aB
\end{equation*}
for all $x_1, x_2 \in \hilb  H_1$ and $y_1, y_2 \in \hilb K$.

\section{Decomposition of C$^*$-Dynamical Systems}
\label{sec:decomp}

As a background for later discussions we first would like to recall the general decomposition of C\Star dynamical systems $(\aA, G, \alpha)$. In analogy to the GNS-construction the conditional expectation $P_0$ onto the fixed point space $\aB := \aA^G$ allows to equip $\aA$ with the definite $\aB$-valued inner product
\begin{equation*}
	\scal{x,y}_\aB := P_0(x^* y) = \int_G \alpha_g(x^*y) \; dg
\end{equation*}
for $x,y \in \aA$. We write $L^2(\aA)$ for the right Hilbert $\aB$-module obtained by taking the completion of $\aA$ with respect to the corresponding norm. The C\Star algebra $\aA$ admits a faithful representation on $L^2(\aA)$ given by
\begin{equation*}
	\lambda: \aA \to \End \bigl( L^2(\aA) \bigr), 
	\quad
	\lambda(x)y := x \cdot y.
\end{equation*}
This allows to identify $\aA$ with the subalgebra $\lambda(\aA) \subseteq \End \bigl( L^2(\aA) \bigr)$ and we implicitly do so unless confusion arise. For each $g \in G$ we have a unitary operator on $L^2(\aA)$ given by $U_g x := \alpha_g(x)$ for $x \in \aA \subseteq L^2(\aA)$. The map $g \mapsto U_g$ is a strongly continuous representation of $G$ that implements the automorphisms $\alpha_g$, $g \in G$, in the sense that 
\begin{equation*}
	\lambda \bigl( \alpha_g(x) \bigr) = U_g \, \lambda(x) \, U_g^\star,
	\qquad \qquad x \in \aA.
\end{equation*}

As every representation of $G$, the algebra $\aA$ can be decomposed into its isotypic components with respect to the actions. More precisely, the sum of the isotypic components is direct and $\sum_{\pi \in \hat G}^{\text{alg}} A(\pi)$ is a dense \Star subalgebra of~$\aA$. Moreover, the isotypic components are mutually orthogonal, closed, linear subspaces of $L^2(\aA)$ with
\begin{equation*}
	L^2(\aA) = \overline{\bigoplus_{\pi \in \hat G}} A(\pi).
\end{equation*}
One aspect of this presentation will be the multiplication of $\aA$. It is worth noting that the multiplication is determined by the action of $A(\pi) \subseteq \aA$ on $A(\rho) \subseteq L^2(\aA)$ for all pairs of irreducible representations $\pi$ and $\rho$.

Instead of dealing with isotypic components, it will be more convenient to consider, for a representation $\pi$, the \emph{generalized isotypic component}
\begin{equation*}
	A_2(\pi) := \bigl\{ x \in \aA \tensor \End(V) \;\big|\; \pi_g \cdot \alpha_g(x) = x \quad \forall g\in G \bigr\}.
\end{equation*}
Obviously, $A_2(\pi)$ is a $\aB$-bimodule for the usual left and right multiplication. In addition, we may equip $A_2(\pi)$ with the $\aB$-valued inner product
\begin{equation*}
	\scal{x,y}_\aB := \tfrac{1}{d_\pi} (\id_\aA \tensor \Tr)(x^* y)
\end{equation*}
for $x,y \in A_2(\pi)$. Then the space $A_2(\pi)$ is a correspondence over $\aB$ (see \eg \cite{CoYa13} for completeness of the norm). If $\pi$ is irreducible, the map $x \mapsto (\id_\aA \tensor \Tr)(x)$ gives an isomorphism of correspondences from $A_2(\pi)$ to the dual isotypic component $A(\bar\pi)$ with inverse given by $y \mapsto d_\pi \int_G \alpha_g(y) \tensor \pi_g \, dg$. In the following we will frequently use this \mbox{identification}.

The multiplication between isotypic components is well captured by family of maps
\begin{gather*}
	m_{\pi, \rho}: A_2(\pi) \tensor_\aB A_2(\rho) \longrightarrow A_2(\pi \tensor \rho) \subseteq \aA \tensor \End(V) \tensor \End(W)
	\\
	m_{\pi, \rho}(x \tensor y) := x_{12} \cdot y_{13}
\end{gather*}
for pairs of representations $(\pi, V)$ and $(\rho, W)$ of~$G$. For an irreducible representation \mbox{$\sigma \in \hat G$} and a representation $\pi$ of $\hat G$ let us denote by \mbox{$P_{\sigma, \pi}: \End(V_\pi) \to \End(V_\sigma)$} the map $P_{\sigma, \pi}(x) := \sum_{k=1} v_k^* x v_k^{}$ with an orthonormal basis of intertwiners $v_1, \dots, v_m:V_\sigma \to V_\pi$. The map $P_{\sigma, \pi}$ does not depend on the choice of intertwiners. Then for irreducible representations $\pi, \rho \in \hat G$ and elements \mbox{$x \in A_2(\pi)$} and $y \in A_2(\rho)$ the operator $\lambda(x)$ takes the form
\begin{equation}
	\label{eq:right_mult}
	\lambda(x) y = \bigoplus_{\sigma \in \hat G} P_{\sigma, \pi \tensor \rho} \bigl( m_{\pi, \rho}(x \tensor y) \bigr).
\end{equation}
Since the sum of the isotypic components is norm dense in $\aA$, the C\Star algebra $\lambda(\aA)$ is generated by the set of these operators $\lambda(x)$ with \mbox{$x \in A_2(\pi)$}, $\pi \in \hat G$. In~particular, the multiplication of $\aA$ can be recovered from the maps $m_{\pi, \rho}$ for $\pi, \rho \in \hat G$ in this way.

\section{Cleft and Weakly Cleft C$^*$-Dynamical Systems}
\label{sec:cleft}

In principle, C\Star dynamical systems $(\aA, G, \alpha)$ with a given fixed point algebra $\aB$ can be classified in a functorial way in terms of the module structure of the generalized isotypic components
%\begin{equation*}
%	A_2(\pi) = \{ x \in \aA \tensor \End(V) \;|\; \pi_g \cdot \alpha_g(x) = x \qquad \forall g \in G \}
%\end{equation*}
and their multiplicative relation (see \cite{Ne13}). In this presentation we will focus on the class of cleft actions. 

\begin{defn}
	A C\Star dynamical system $(\aA, G, \alpha)$ is called \emph{cleft} if for every irreducible representation $(\pi,V)$ of $G$ the set $A_2(\pi) \subseteq \aA \tensor \End(V)$ contains a unitary element.
\end{defn}

%\begin{rmk}
%	By a polar decomposition argument, the set $A_2(\pi)$ contains a unitary element if and only if it contains an invertible argument.  
%\end{rmk}

Cleft C\Star dynamical systems are precisely the so called semidual actions discussed in \cite{Wass89}. For such a C\Star dynamical system $(\aA, G, \alpha)$ it follows along the same lines as in \cite[Thm.\,10]{Wass89} that the crossed product $\aA \rtimes G$ is isomorphic to $\aA^G \tensor \mathbb K$, generalizing Green's Theorem (\cf \cite[Cor.\,15]{Green77} and \cite{EchNeOy09}). In the algebraic theory of actions of Hopf algebras these actions are up to equivalence given by twisted crossed products (\cf \cite{YuMi86,Doi89}, see also \cite{VaVa03}).  
Most arguments in our discussion rely on the following weaker hypothesis only and establishing the results in a slightly wider framework has some technical advantages later on. 

\begin{defn}
	A C\Star dynamical system $(\aA, G, \alpha)$ is called \emph{weakly cleft} if for every irreducible representation $(\pi,V)$ of $G$ the set $A_2(\pi) \subseteq \aA \tensor \End(V)$ contains an element $s$ such that
	\begin{align*}
		s^*s &= \one
		&
		&\text{and}
		&
		s s^* x &= x 
	\end{align*}
	for all $x \in A_2(\pi)$. We call such an element $s$ a \emph{non-degenerate isometry}.
\end{defn}
Unfortunately, we cannot present an examples of a weakly cleft but not cleft action. In fact, in simple examples our results show that weakly cleft dynamical systems are automatically cleft (see Lemma~\ref{lem:Abelian=>cleft} and~\ref{cor:comm=>free}). We do not yet know whether this holds in general. Most proofs of this article only rely on the weakly cleft assumption but can be simplified for cleft systems.
The advantage of dealing with weakly cleft actions is that this property can be characterized by the right Hilbert module structure of the (generalized) isotypic components. More precisely, the following lemma shows that the C\Star system $(\aA, G, \alpha)$ is weakly cleft if and only if each isotypic component $A(\pi)$, $\pi \in \hat G$, is a free right Hilbert $\aB$-module of rank $d_\pi^2$. In the ergodic case, $\aB = \C\one$, this is the same as saying that the Hilbert space $A(\pi)$ has its maximal dimension $d_\pi^2$, \ie, $A(\pi)$ has full \mbox{multiplicity}. 

\pagebreak[3]
\begin{lemma}
	\label{lem:cleft_isom}
	For an element $s \in A_2(\pi)$ the following statements are equivalent:
	\begin{equivalence}
	\item	\label{en:n.d. isometry}
		$s$ is a non-degenerated isometry.
	\item	\label{en:isomorphism}
		The map $\varphi:\aB \tensor \End(V_\pi) \to A_2(\pi)$, $\varphi(x) := s x$ is an isomorphism of right Hilbert $\aB$-modules.
	\end{equivalence}
\end{lemma}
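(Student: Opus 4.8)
\emph{Strategy and setup.} I would prove the equivalence by direct computation, everything resting on the reformulation of the defining relation of the generalized isotypic component: an element $x \in \aA \tensor \End(V_\pi)$ lies in $A_2(\pi)$ if and only if $(\alpha_g \tensor \id)(x) = (\one \tensor \pi_g^*)\,x$ for all $g \in G$. Since $\End(V_\pi)$ is finite-dimensional, the fixed-point algebra of $\alpha \tensor \id$ on $\aA \tensor \End(V_\pi)$ is precisely $\aB \tensor \End(V_\pi)$, which I regard as a right Hilbert $\aB$-module with inner product $\scal{x,y}_\aB = \tfrac1{d_\pi}(\id \tensor \Tr)(x^*y)$, making it free of rank $d_\pi^2$. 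Before treating the two implications I would record two facts that need no hypothesis on $s$: (i) if $s \in A_2(\pi)$ and $y \in \aB \tensor \End(V_\pi)$, then $sy \in A_2(\pi)$, obtained by applying $\one \tensor \pi_g$ to $(\alpha_g \tensor \id)(sy) = (\alpha_g \tensor \id)(s)\,(\alpha_g\tensor\id)(y)$ and using that $y$ is fixed while $(\one\tensor\pi_g)(\alpha_g\tensor\id)(s) = s$ --- so $\varphi$ is a well-defined right $\aB$-module map; and (ii) if $x, y \in A_2(\pi)$, then $x^*y \in \aB \tensor \End(V_\pi)$, since $(\alpha_g\tensor\id)(x^*y) = x^*(\one\tensor\pi_g)(\one\tensor\pi_g^*)y = x^*y$. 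In particular $s^*x \in \aB\tensor\End(V_\pi)$ for every $x\in A_2(\pi)$, and $s^*s \in \aB\tensor\End(V_\pi)$.

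\emph{From \ref{en:n.d. isometry} to \ref{en:isomorphism}.} Using $s^*s = \one$ I would compute $\scal{\varphi(x),\varphi(y)}_\aB = \tfrac1{d_\pi}(\id\tensor\Tr)(x^*s^*s\,y) = \scal{x,y}_\aB$, so $\varphi$ is inner-product preserving, hence isometric and injective. For surjectivity, given $x\in A_2(\pi)$, fact (ii) places $s^*x$ in the domain of $\varphi$, and non-degeneracy gives $\varphi(s^*x) = ss^*x = x$. A bijective inner-product preserving module map is automatically adjointable with adjoint equal to its inverse, hence a unitary, which is the asserted isomorphism of right Hilbert $\aB$-modules.

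\emph{From \ref{en:isomorphism} to \ref{en:n.d. isometry}.} An isomorphism of Hilbert modules preserves inner products, so $\tfrac1{d_\pi}(\id\tensor\Tr)\bigl(x^*(s^*s-\one)y\bigr) = 0$ for all $x,y\in\aB\tensor\End(V_\pi)$. Setting $c := s^*s - \one$ (self-adjoint, and in $\aB\tensor\End(V_\pi)$ by (ii)) and taking $x = c$, $y = \one$ gives $(\id\tensor\tfrac1{d_\pi}\Tr)(c^*c) = 0$; since the conditional expectation $\id\tensor\tfrac1{d_\pi}\Tr \colon \aB\tensor\End(V_\pi)\to\aB$ is faithful (a positive matrix over $\aB$ whose normalized trace-diagonal vanishes is zero), we get $c = 0$, i.e.\ $s^*s = \one$. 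Finally, surjectivity of $\varphi$ writes any $x\in A_2(\pi)$ as $x = sy$ with $y\in\aB\tensor\End(V_\pi)$, whence $ss^*x = s(s^*s)y = sy = x$; thus $s$ is a non-degenerate isometry.

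\emph{Expected obstacle.} None of this is deep; the only point requiring care is the bookkeeping in (i) and (ii) --- keeping the factors $\one\tensor\pi_g$ and $\one\tensor\pi_g^*$ on the correct sides so that they cancel --- together with the (standard) use of finite-dimensionality of $\End(V_\pi)$ to identify $(\aA\tensor\End(V_\pi))^{\alpha\tensor\id}$ with $\aB\tensor\End(V_\pi)$. Everything after that is routine Hilbert-module algebra.
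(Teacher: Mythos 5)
Your proof is correct and follows essentially the same route as the paper's: isometry plus non-degeneracy yields the inverse $y \mapsto s^*y$ in one direction, and preservation of the inner product by $\varphi$ forces $s^*s = \one$ in the other. The only cosmetic difference is in the converse, where the paper first shows $p = s^*s$ is a projection and then invokes injectivity of $\varphi$ at $\varphi(p) = \varphi(\one)$, whereas you conclude $s^*s = \one$ directly from faithfulness of $\id \tensor \tfrac{1}{d_\pi}\Tr$; both arguments are fine, and your explicit checks that $sy \in A_2(\pi)$ and $s^*x \in \aB \tensor \End(V_\pi)$ merely make precise what the paper leaves implicit.
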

\begin{proof}
	For one implication notice that for an isometry $s$ the map $\varphi(x) = sx$ is an isometry for the right inner product. If $s$ is non-degenerate, $\varphi$ admits the inverse $\varphi^{-1}(y) = s^*y$. Together this proves the implication from \ref{en:n.d. isometry} to \ref{en:isomorphism}. 
	For the converse implication, suppose that $\varphi$ is an isomorphism. Then the selfadjoint element $p := s^*s \in \aB \tensor \End(V)$ is a projection, since it satisfies
	\begin{equation*}
		\scal{p,x}_\aB
		= \scal{\varphi(p), \varphi(x)}_\aB
		= \tfrac{1}{d_\pi} (\id \tensor \Tr)(s^*ss^* sx)
		= \scal{\smash{p^2},x}_\aB
	\end{equation*}
	for all $x \in \aB \tensor \End(V)$. Injectivity of $\varphi$ then implies that $s$ is in fact an isometry because $\varphi(p) = ss^*s = s = \varphi(\one)$.
	It follows that the inverse of $\varphi$ is given by $\varphi^{-1}(x) = s^*x$ and hence we have $x = \varphi\bigl( \varphi^{-1}(x) \bigr) = ss^* x$ for all $x \in A_2(\pi)$. 
\end{proof}

To distinguish cleft and weakly cleft, let us introduce a third property, which is of great independent interest. Let $(\aA, G,\alpha)$ be a C\Star dynamical system. For a representation $(\pi, V)$ of $G$ we write $A_2(\pi) A_2(\pi)^*$ for the linear subspace generated by products $xy^*$ of elements $x,y\in A_2(\pi)$, and we put
\begin{equation*}
	C(\pi) := \{ x \in \aA \tensor \End(V) \;|\; \bigl( \alpha_g \tensor \Ad[\pi_g] \bigr)(x) = x \quad \forall g \in G\}.
\end{equation*}

\begin{defn}
	A~C\Star dynamical system $(\aA, G, \alpha)$ is called \emph{free} if for all representations $\pi$ of $G$ we have $A_2(\pi) A_2(\pi)^* = C(\pi)$.
\end{defn}

Free actions have attained special interest in the literature, see, \eg, \cite{Phi87,Rieffel91,Phi09,CoYa13}. For the definition given here and the relation to noncommutative principal bundles, we refer to \cite[Section 3]{SchWa15}. It should be noted that $A_2(\pi) A_2(\pi)^*$ is always a \Star ideal in $C(\pi)$. Therefore, the dynamical system is free if and only if the closure of $A_2(\pi)A_2(\pi)^*$ contains the unit $\one_{\aA \tensor \End(V)}$.

\pagebreak[3]
\begin{lemma}
	A C\Star dynamical system is cleft if and only if it is weakly cleft and free.
	\label{lem:cleft_n_sat}
\end{lemma}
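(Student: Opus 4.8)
The plan is to prove the two implications separately; both are short. I would organize the whole argument around one idea: the non-degeneracy condition on an isometry $s \in A_2(\pi)$ is exactly what lets one pass between statements about how $s$ acts on the module $A_2(\pi)$ and the algebra identity $ss^* = \one$ inside $\aA \tensor \End(V)$.

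\emph{Cleft $\Rightarrow$ weakly cleft and free.} That cleft implies weakly cleft is immediate: a unitary $u \in A_2(\pi)$ satisfies $u^*u = \one = uu^*$, so $uu^*x = x$ for every $x$ and $u$ is a non-degenerate isometry. For freeness one must argue for an \emph{arbitrary} representation $(\pi,V)$, so I would first reduce to the irreducible case: decompose $V = \bigoplus_{k=1}^{n} V_k$ into $\pi$-invariant subspaces on which $\sigma_k := \pi|_{V_k}$ is irreducible, use cleftness to choose unitaries $u_k \in A_2(\sigma_k) \subseteq \aA \tensor \End(V_k)$, and form the block-diagonal element $u := \bigoplus_k u_k \in \aA \tensor \End(V)$. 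Then $u$ is again unitary, and since $\pi_g = \bigoplus_k (\sigma_k)_g$ is block-diagonal one gets $\pi_g \, \alpha_g(u) = \bigoplus_k (\sigma_k)_g \, \alpha_g(u_k) = u$, so $u \in A_2(\pi)$. Hence $\one_{\aA \tensor \End(V)} = u u^* \in A_2(\pi) A_2(\pi)^*$, and since this holds for every $\pi$, the characterization of freeness recalled just before the lemma shows that the system is free.

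\emph{Weakly cleft and free $\Rightarrow$ cleft.} Fix an irreducible $(\pi,V)$ and a non-degenerate isometry $s \in A_2(\pi)$, and set $p := ss^* \in A_2(\pi)A_2(\pi)^* \subseteq C(\pi)$, a self-adjoint projection. The point is that $p$ is a two-sided unit for the \Star subalgebra $A_2(\pi)A_2(\pi)^*$: for $x,y \in A_2(\pi)$ we have $p(xy^*) = (ss^*x)y^* = xy^*$, and taking adjoints in $p(yx^*) = yx^*$ gives $(xy^*)p = xy^*$. By continuity of the multiplication $p$ is then also a unit for the closure of $A_2(\pi)A_2(\pi)^*$, which by freeness coincides with $C(\pi)$; since $C(\pi)$ is unital with unit $\one_{\aA \tensor \End(V)}$, we conclude $p = \one$, i.e. $s^*s = \one = ss^*$ and $s$ is unitary. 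As $\pi$ was an arbitrary irreducible representation, the system is cleft.

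I expect this last implication to be the ``hard'' step only in the sense that it is the one that actually uses the hypotheses non-trivially: everything reduces to the remark, recalled before the statement, that $A_2(\pi)A_2(\pi)^*$ is always a \Star ideal in $C(\pi)$, and no further structural input is needed. The one place where a little care is warranted is the first implication, where freeness must be verified for all representations and not merely the irreducible ones; the block-diagonal construction above is exactly what carries the definition of cleftness (stated for irreducibles) over to a general $\pi$.
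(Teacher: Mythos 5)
Your proof is correct. The forward implication is essentially the paper's, except that you spell out the reduction from an arbitrary representation to its irreducible summands via the block-diagonal unitary $u = \sum_k (1 \tensor v_k) u_k (1 \tensor v_k)^*$; the paper takes this extension for granted here (it is carried out explicitly only at the start of Section~\ref{sec:factor_sys}), so your extra care is welcome rather than redundant. For the converse you take a genuinely different, and arguably more elementary, route: the paper invokes Lemma~\ref{lem:cleft_isom} to identify $A_2(\pi)$ with $s \cdot \bigl( \aB \tensor \End(V) \bigr)$ as a right Hilbert module, deduces that $A_2(\pi) A_2(\pi)^* = s \cdot \aB \tensor \End(V) \cdot s^*$ is already closed, and concludes from freeness that $s$ has full range; you instead observe directly that non-degeneracy makes $p = ss^*$ a two-sided unit for the \Star algebra $A_2(\pi)A_2(\pi)^*$, hence by continuity for its closure $C(\pi)$, which forces $p = p \cdot \one = \one$. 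Your argument bypasses the module-isomorphism lemma entirely and uses only the remark that $A_2(\pi)A_2(\pi)^*$ is a \Star ideal in the unital algebra $C(\pi)$, at the cost of not exhibiting the concrete form of $A_2(\pi)A_2(\pi)^*$ that the paper's computation provides (and reuses elsewhere, \eg in the proof of Theorem~\ref{thm:factor_sys_concrete}). Both arguments are sound; note only that since freeness is defined by the exact equality $A_2(\pi)A_2(\pi)^* = C(\pi)$, your passage to the closure is not even needed — $\one$ already lies in $A_2(\pi)A_2(\pi)^*$ itself — though phrasing it via the closure matches the paper's remark that freeness is equivalent to $\one$ lying in the closure.
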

\begin{proof}
	If $(\aA, G, \alpha)$ is cleft, it is obviously weakly cleft. Since for every representation $(\pi, V)$ of $G$ we find a unitary $u \in A_2(\pi)$, the set $A_2(\pi) A_2(\pi)^*$ contains $uu^* = \one_{\aA \tensor \End(V)}$ and hence the dynamical system is free.
	To show the converse, suppose that $(\aA, G, \alpha)$ is weakly cleft and free and let $(\pi, V)$ be a representation of $G$. Then we find a non-degenerated isometry $s \in A_2(\pi)$ and obtain
	\begin{equation*}
		A_2(\pi) A_2(\pi)^* 
		= \bigl( s \cdot \aB \tensor \End(V) \bigr) \cdot \bigl( s \tensor \aB \tensor \End(V) \bigr)^* 
		= s \cdot \aB \tensor \End(V) \cdot s^*.
	\end{equation*}
	This set is closed and, since the dynamical system is free, it contains the unit. It~follows that $s$ must have full range, \ie, $s$ is a unitary element and hence the system is cleft.
\end{proof}

\begin{lemma}
	For a compact Abelian group $G$, every weakly cleft C\Star dynamical system is cleft. 
	\label{lem:Abelian=>cleft}
\end{lemma}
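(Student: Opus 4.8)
The plan is to prove the slightly stronger statement that, for every irreducible representation $\chi$ of the Abelian group $G$, \emph{any} non-degenerate isometry $s_\chi \in A_2(\chi)$ is already unitary. Since $G$ is Abelian, every irreducible representation is one-dimensional, so $\End(V_\chi) \cong \C$ and $A_2(\chi) = \{x \in \aA : \chi_g\,\alpha_g(x) = x \text{ for all } g \in G\}$ sits inside $\aA$ itself; a unitary in $A_2(\chi)$ is then exactly the element demanded by the definition of cleftness. I would also record the two elementary facts $A_2(\chi)^* = A_2(\bar\chi)$ and $A_2(\chi)\,A_2(\bar\chi) \subseteq \aB$, both immediate from the defining relations.

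The heart of the argument is short. Using the weakly cleft hypothesis for the irreducibles $\chi$ and $\bar\chi$, fix non-degenerate isometries $s_\chi \in A_2(\chi)$ and $s_{\bar\chi} \in A_2(\bar\chi)$, and set $p := s_\chi s_\chi^*$; non-degeneracy of $s_\chi$ says precisely that $p\,a = a$ for all $a \in A_2(\chi)$. Since $s_{\bar\chi}^* \in A_2(\bar\chi)^* = A_2(\chi)$, applying this with $a = s_{\bar\chi}^*$ gives $p\,s_{\bar\chi}^* = s_{\bar\chi}^*$, and taking adjoints (recall $p^* = p$) yields $s_{\bar\chi}\,p = s_{\bar\chi}$. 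Now put $b := s_\chi s_{\bar\chi} \in \aB$. On the one hand $b^* b = s_{\bar\chi}^*\,(s_\chi^* s_\chi)\,s_{\bar\chi} = s_{\bar\chi}^* s_{\bar\chi} = \one$, so $b$ is an isometry and hence injective; on the other hand $b\,p = s_\chi\,(s_{\bar\chi}\,p) = s_\chi s_{\bar\chi} = b$, i.e.\ $b(\one - p) = 0$. Injectivity of $b$ forces $p = \one$, so $s_\chi s_\chi^* = \one = s_\chi^* s_\chi$ and $s_\chi$ is unitary. As $\chi$ ranges over $\hat G$, the system is cleft.

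I do not expect a serious obstacle here; the whole proof is a two-line computation once the right element $b = s_\chi s_{\bar\chi}$ is written down. The only delicate point is the bookkeeping with the dual representation — one must check that $s_{\bar\chi}^*$ really belongs to $A_2(\chi)$, so that the non-degeneracy of $s_\chi$ can be applied to it, and that the product $s_\chi s_{\bar\chi}$ is formed inside $\aA$. It is exactly this last feature that is special to the one-dimensional case: for non-Abelian $G$ one has $A_2(\pi) \subseteq \aA \tensor \End(V_\pi)$ and $A_2(\bar\pi) \subseteq \aA \tensor \End(\bar V_\pi)$ in different ambient algebras, so there is no such product $s_\pi s_{\bar\pi}$, which is why the corresponding statement for non-Abelian groups is left open. (Alternatively, one could deduce the result from Lemma~\ref{lem:cleft_n_sat} by instead checking freeness directly, but the argument above is more economical.)
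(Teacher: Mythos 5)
Your proof is correct. The key facts you use -- that for one-dimensional $\chi$ one has $A_2(\bar\chi)=A_2(\chi)^*$ and that products like $s_\chi s_{\bar\chi}$ live in $\aB$ -- are exactly right, and the computation $b^*b=\one$, $b(\one-p)=0\Rightarrow p=\one$ is watertight (left multiplication by an isometry in a C\Star algebra is injective). Your route is, however, genuinely different from the paper's. The paper does not touch non-degeneracy at all: it observes that $A(\pi)A(\pi)^*=A(\bar\pi)^*A(\bar\pi)$ contains $s^*s=\one$ for any isometry $s$ in the dual component, so the system is free, and then invokes Lemma~\ref{lem:cleft_n_sat} (cleft $\Leftrightarrow$ weakly cleft and free) to conclude. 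That argument is a one-liner given the earlier lemma -- and you yourself flag this alternative in your closing remark. What your version buys instead is a self-contained proof of the a priori stronger statement that in the Abelian case \emph{every} non-degenerate isometry in $A_2(\chi)$ is automatically unitary, with the non-degeneracy hypothesis doing visible work (it is what gives $s_{\bar\chi}\,p=s_{\bar\chi}$); the cost is that you need the weakly cleft hypothesis at both $\chi$ and $\bar\chi$ simultaneously, whereas the paper's freeness argument uses only the isometry property. Both proofs hinge on the same structural coincidence, namely that for Abelian $G$ the generalized isotypic component of $\chi$ is the adjoint of that of $\bar\chi$ inside the single ambient algebra $\aA$, which, as you correctly note, is precisely what breaks down for non-Abelian~$G$.
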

\begin{proof}
	For an Abelian groups generalized isotypic components and the isotypic component of the dual representation literally coincide. If the C\Star dynamical system is weakly cleft, then for each $\pi \in \hat G$ we find an isometry $s \in A(\bar\pi)$. Hence for the dual representation $\bar\pi$ the set $A(\pi) A(\pi)^* = A(\bar\pi)^* A(\bar\pi)$ contain the element \mbox{$s^* s = \one$}.
\end{proof}

\begin{expl}
	\label{expl:trivial_bundle}
	We start with the most basic example as a prototype. Let $\aB$ be a unital C\Star algebra and denote by $C(G)$ the C\Star algebra of continuous functions on $G$. We consider the C\Star dynamical system $\bigl( \aB \tensor C(G), G, \id \tensor r \bigr)$ where the action on $C(G)$ is given by the right translation \mbox{$(r_g f)(h) := f(hg)$} for $f \in C(G)$ and \mbox{$h \in G$}. Clearly, the fixed point space of this system is $\aB = \aB \tensor 1_G$. We~may identify elements in a tensor product with $C(G)$ with functions on $G$ in the usual way. Then, for an irreducible representation $(\pi, V)$ of $G$, the correspondence $A_2(\pi)$ over $\aB$ is given by 
	\begin{equation*}
		A_2(\pi) = \{f:G \to \aB \tensor \End(V) \; \text{continuous} \;|\; \pi_g f(g) = f(e) \quad \forall g \in  G\}.
	\end{equation*}
	The function $u(g) := \pi_g^*$ then obviously is a unitary element in $A_2(\pi)$, which shows that the system is cleft.
\end{expl}

If $\aB$ is commutative, \ie, $\aB = C(X)$ for some compact Hausdorff space $X$, then the dynamical system of Example~\ref{expl:trivial_bundle} can be be understood as a trivial principal $G$\ndash bundle over the space~$X$. The next example shows that also non-trivial principal bundles may give rise to cleft actions. Moreover, in the later Corollary~\ref{cor:commutative_syst} we will provide a characterization of cleft topological principal bundles.

\begin{expl}
	\label{expl:non-trivial_bundle}
	Fix $n \in \N$ and denote by $C_n := \{\zeta \in \C \;|\; \zeta^n =1\}$ the group of $n$-th roots of unity. We consider the C\Star algebra $\aA := C(\mathbb T)$ of continuous function on the circle with the action of $C_n$ given by rotations, \ie, for $\zeta \in C_n$ and $f \in C(\mathbb T)$ put
	\begin{equation*}
		(\alpha_\zeta f)(z) := f(\zeta \cdot z),
		\qquad \qquad z \in \mathbb T.
	\end{equation*}
	For an irreducible representation of $C_n$, \ie, an element $k \in \Z / n\Z$, we have
	\begin{equation*}
		A_2(-k) = A(k) = \{f \in C(\mathbb T) \;|\; f(\zeta \cdot z) = \zeta^k \cdot f(z) \quad \forall z \in \mathbb T \}.
	\end{equation*}
	The action is cleft, because an invertible element in $A_2(-k)$ is, for instance, given by the function $f(z) := z^k$, $z \in \mathbb T$.
	This dynamical system can be understood as the non-trivial principal $C_n$-bundle corresponding to the $n$-fold covering \mbox{$p:\mathbb T \to \mathbb T$}, $p(z) := z^n$. 
	In fact, since all vector bundles over $\mathbb T$ are trivial (\cf \cite[Section~18]{Steen99}), every principal bundles over $\mathbb T$ with arbitrary compact structure group $G$ gives rise to a cleft C\Star dynamical system.
\end{expl}

\begin{expl}
	\label{expl:twisted actions}
	Consider a compact group $G$ and a closed normal subgroup $N$ and suppose that the action of $N$ on $\Cont(G)$ by right translations is cleft (\eg suppose that $G/N$ is finite). Furthermore, let $\delta:\aA \to M\bigl(\aA \tensor C^*(G))$ be a coaction of $G$ on a unital C\Star algebra~$\aA$. We recall that a \emph{twist} over $G/N$ is a unitary corepresentation $W\in M(\alg A\otimes C^*(G/N))$ of $G/N$ such that 
	$(\id \tensor q) \circ \delta = \Ad[W]$ and 
	$(\delta \tensor \id)(W) = W_{13}$,
	where $q$ denotes the natural \Star homomorphism $q:C^*(G) \to C^*(G/N)$. 
	Such a twist gives rise to an ideal $I_W$ of the crossed product $\aA \rtimes_\delta G$, called twisting ideal, which is invariant under the canonical action of $N$ on $\aA \rtimes_\delta G$ (see \cite{PhiRae94}). The \emph{twisted crossed product} $\aA \rtimes_{\delta,W} G := (\aA \rtimes_\delta G) / I_W$ then carries an action of $N$, which we denote by $\alpha$, that is, we obtain a C\Star dynamical system 
	\begin{equation*}
		(\aA \rtimes_{\delta,W} G, \; N, \;\alpha).
	\end{equation*}
	Since the natural embedding of $\Cont(G)$ into $\aA \rtimes_\delta G$ is $G$-equivariant, the factorized homomorphism $k_G:\Cont(G) \to \aA \rtimes_{\delta,W} G$ is $N$-equivariant. Finally, a few moments thought shows that being cleft is preserved under equivariant \Star homomorphisms and, therefore, it follows that the above C\Star dynamical system is cleft.
\end{expl}

\section{Factor Systems}
\label{sec:factor_sys}

Let $(\aA, G, \alpha)$ be a C\Star dynamical system with fixed point algebra $\aB$. Suppose for the moment that the system is weakly cleft, that is, for each irreducible representation $\pi \in \hat G$ we find a non-degenerate isometry $s_\pi \in A_2(\pi) \subseteq \aA \tensor \End(V_\pi)$. For the trivial representation, denoted by $1$, we pick $s_1 := \one_\aB$. We may extend this family of isometries to non-irreducible representations by decomposing each representation $(\pi, V)$ into a direct sum of irreducible representations $(\sigma_k, V_k)$ with intertwiners $v_k:V_k \to V$ for each \mbox{$1 \le k \le m$} and put \mbox{$s_\pi := \sum_{k=1}^m v_k^{} s_{\sigma_k} v_k^*$}.
It is easily checked that this provides a non-degenerate isometry in $A_2(\pi)$ and that the construction does not depend on the choice of intertwiners. 

By Lemma~\ref{lem:cleft_isom}, for each $\pi \in \hat G$ the space $A_2(\pi)$ is isomorphic to $\aB \tensor \End(V_\pi)$ as a right Hilbert $\aB$-module, but in general not as a left $\aB$-module. In order to describe the left action of $\aB$ consider the map
\begin{equation}
	\label{eq:gamma_concrete}
	\gamma_\pi: \aB \to \aB \tensor \End(V_\pi), 
	\quad
	\gamma_\pi(b) := s_\pi^* (b \tensor \one_\pi) s_\pi.
\end{equation}
The correspondence $A_2(\pi)$ is then isomorphic to the vector space $\aB \tensor \End(V_\pi)$ equipped with the usual right multiplication by $\aB$, the usual right $\aB$-valued inner product, and the left multiplication given by
\begin{equation*}
	b\,.\,x := \gamma_\pi(b) \, x,
\end{equation*}
for all $x \in \aB \tensor \End(V_\pi)$ and $b \in \aB$. 

\pagebreak[3]
\begin{lemma}
	The map $\gamma_\pi$ is a unital \Star homomorphism.
\end{lemma}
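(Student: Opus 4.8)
The plan is to check the \Star-homomorphism axioms one at a time; all of them are immediate except multiplicativity, which is the only place the non-degeneracy of $s_\pi$ really enters. Linearity of $\gamma_\pi$ is clear from the formula $\gamma_\pi(b) = s_\pi^*(b \tensor \one_\pi)s_\pi$, and $\gamma_\pi$ is \Star-preserving because $\gamma_\pi(b^*) = \bigl(s_\pi^*(b \tensor \one_\pi)s_\pi\bigr)^* = \gamma_\pi(b)^*$. Unitality is the first use of the isometry property: $\gamma_\pi(\one_\aB) = s_\pi^*(\one_\aA \tensor \one_\pi)s_\pi = s_\pi^* s_\pi = \one$. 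One should also confirm that $\gamma_\pi$ genuinely takes values in $\aB \tensor \End(V_\pi)$: from $s_\pi \in A_2(\pi)$ one gets $(\alpha_g \tensor \id)(s_\pi) = (\one_\aA \tensor \pi_g^*)s_\pi$, hence $(\alpha_g \tensor \id)(s_\pi^*) = s_\pi^*(\one_\aA \tensor \pi_g)$, and therefore
\[
(\alpha_g \tensor \id)\bigl(\gamma_\pi(b)\bigr) = s_\pi^*(\one_\aA \tensor \pi_g)(b \tensor \one_\pi)(\one_\aA \tensor \pi_g^*)s_\pi = s_\pi^*(b \tensor \one_\pi)s_\pi = \gamma_\pi(b),
\]
using that $\one_\aA \tensor \pi_g$ commutes with $b \tensor \one_\pi$; thus $\gamma_\pi(b)$ is fixed by $\alpha \tensor \id$ and lies in $\aB \tensor \End(V_\pi)$. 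Continuity is then automatic, as for any \Star homomorphism of C\Star algebras.

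For multiplicativity the key observation is that $A_2(\pi)$ is stable under left multiplication by $\aB$: for $b \in \aB$ and $x \in A_2(\pi)$ one has $(b \tensor \one_\pi)x \in A_2(\pi)$, since $\alpha_g(b) = b$ and $\one_\aA \tensor \pi_g$ commutes with $b \tensor \one_\pi$ — this is just the left $\aB$-module structure of $A_2(\pi)$ already noted. Applying this with $x = s_\pi$ and then invoking non-degeneracy of $s_\pi$, namely $s_\pi s_\pi^* y = y$ for all $y \in A_2(\pi)$, gives the crucial reduction $s_\pi s_\pi^*(b \tensor \one_\pi)s_\pi = (b \tensor \one_\pi)s_\pi$. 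Substituting it into the product $\gamma_\pi(b_1)\gamma_\pi(b_2)$ yields
\[
\gamma_\pi(b_1)\gamma_\pi(b_2) = s_\pi^*(b_1 \tensor \one_\pi)\,s_\pi s_\pi^*\,(b_2 \tensor \one_\pi)s_\pi = s_\pi^*(b_1 b_2 \tensor \one_\pi)s_\pi = \gamma_\pi(b_1 b_2),
\]
so $\gamma_\pi$ is multiplicative, completing the argument.

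I do not expect a genuine obstacle: the whole proof reduces to isolating the single place where non-degeneracy of $s_\pi$ is used, namely the identity $s_\pi s_\pi^*(b \tensor \one_\pi)s_\pi = (b \tensor \one_\pi)s_\pi$, everything else being routine manipulation with the leg-numbering conventions for elements of $\aA \tensor \End(V_\pi)$. The one subtlety worth flagging is that mere isometry ($s_\pi^* s_\pi = \one$) would \emph{not} suffice for multiplicativity — one genuinely needs the projection $s_\pi s_\pi^*$ to act as the identity on $A_2(\pi)$ — which is exactly why the definition of weakly cleft asks for a non-degenerate isometry rather than an arbitrary one.
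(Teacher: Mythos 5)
Your proof is correct and follows essentially the same route as the paper: the \Star-preservation and unitality are immediate from $s_\pi$ being an isometry, and multiplicativity follows from the observation that $(b \tensor \one)s_\pi \in A_2(\pi)$ together with non-degeneracy, which lets you insert $s_\pi s_\pi^*$ harmlessly between the two factors. The extra verifications you include (that $\gamma_\pi$ lands in $\aB \tensor \End(V_\pi)$, and the closing remark that a mere isometry would not suffice) are accurate and correctly identify where the non-degeneracy hypothesis is essential.
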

\begin{proof}
	Obviously, $\gamma_\pi(b^*) = \gamma_\pi(b)^*$ for all $b \in \aB$ and $\gamma_\pi$ is unital, since $s_\pi$ is an isometry. For each $b \in \aB$ the element $(b\tensor \one)s_\pi$ lies in $A_2(\pi)$. The non-degeneracy of $s_\pi$ then implies for all $b_1, b_2 \in \aB$ that
	\begin{equation*}
		\gamma_\pi(b_1) \gamma_\pi(b_2) 
		= s_\pi^* (b_1 \tensor \one)s_\pi s_\pi^*(b_2 \tensor \one) s_\pi 
		= s_\pi^* (b_1 \tensor \one)(b_2 \tensor \one) s_\pi
		= \gamma_\pi(b_1 b_2) .
		\qedhere
	\end{equation*}
\end{proof}

\begin{rmk}
	\label{rmk:gamma_unique}
	We would like to point out that, since $s_\pi$ is non-degenerated, the map $\gamma_\pi:\aB \to \aB \tensor \End(V_\pi)$ is uniquely determined by the property $(b \tensor \one) s_\pi = s_\pi \gamma_\pi(b)$ for all $b \in \aB$.
	%\begin{equation*}
	%	(b \tensor \one_\pi) s_\pi = s_\pi \gamma_\pi(b)
	%	\qquad \qquad \forall b  \in \aB.
	%\end{equation*}
\end{rmk}

In the following we will frequently deal with tensor products in which precisely one factor is $\aB$. In this cases we will allow more flexibility for the position of the factor $\aB$, that is, we reshuffle the tensor factor is such a way that $\aB$ is at a convenient position, usually the first factor, and the other factors are kept in order. 

Coming back to the dynamical system, the multiplicative structure among the sets $A_2(\pi)$ for different representations can be phrased in terms of the elements $s_\pi$, too. For two irreducible representations $\pi, \rho \in \hat G$ consider the multiplication map
\begin{equation}\label{multiplication maps}
\begin{gathered}
	m_{\pi, \rho}: A_2(\pi) \tensor_\aB A_2(\rho) \longrightarrow A_2(\pi \tensor \rho) 
	\subseteq \aA \tensor \End(V_\pi) \tensor \End(V_\rho),
	\\
	m_{\pi,\rho}(x \tensor y)  := x_{12} \, y_{13}.
\end{gathered}
\end{equation}

This is a module map for the right action of $\aB \tensor \End(V_\pi) \tensor \End(V_\rho)$ on domain and codomain. Therefore, it is uniquely determined by the element $m_{\pi, \rho}(s_\pi, s_\rho)$. For this element there is a unique element $\omega(\pi, \rho) \in \aB \tensor \End(V_\pi \tensor V_\rho)$ with $m_{\pi, \rho}(s_\pi, s_\rho) = s_{\pi \tensor \rho} \cdot \omega(\pi, \rho)$. In fact, $\omega(\pi,\rho)$ is the isometry given by
\begin{equation}
	\label{eq:u_concrete}
	\omega(\pi,\rho) = s_{\pi \tensor \rho}^* \, (s_\pi)_{12} \, (s_\rho)_{13}.
\end{equation}
With this element the multiplication map can be written as
\begin{equation}
	\label{eq:mult_map}
	m_{\pi, \rho}(s_\pi x \tensor s_\rho y) 
	= s_{\pi \tensor \rho} \cdot \omega(\pi,\rho) \cdot (\id_\pi \tensor \gamma_\rho)(x) \cdot (\one_\pi \tensor y)
\end{equation}
for all $x \in \aB \tensor \End(V_\pi)$, $y \in \aB \tensor \End(V_\rho)$.

We want to classify and characterize weakly cleft C\Star dynamical systems in terms of the \Star homomorphisms $\gamma_\pi$ and the isometries $\omega(\pi,\rho)$. For this purpose, we fix a group $G$ and a C\Star algebra $\aB$ and we consider pairs $(\gamma, \omega)$ consisting of two families $\gamma = (\gamma_\pi)_{\pi \in \hat G}$ and $\omega = \bigl( \omega(\pi,\rho) \bigr)_{\pi, \rho \in \hat G}$ where
\begin{enumerate}
\item 
	for each $\pi \in \hat G$, we have a unital \Star homomorphism $\gamma_\pi:\aB \to \aB \tensor \End(V_\pi)$ and
\item
	for each $\pi, \rho \in \hat G$, we have an isometry $\omega(\pi,\rho) \in \aB \tensor \End(V_\pi \tensor V_\rho)$.
\end{enumerate}

\pagebreak[3]
\begin{defn}
	\label{defn:factor_sys}
	\begin{enumerate}
	\item 
		The pair $(\gamma, \omega)$ is called a \emph{factor system} for $(G,\aB)$ if $\omega(1,1) = \one_\aB$ and the family satisfies
		\begin{align}
			\label{eq:gamma}
			\omega(\pi,\rho) \cdot (\id_\pi \tensor \gamma_\rho) \bigl( \gamma_\pi(b) \bigr) 
			&= \gamma_{\pi \tensor \rho}(b) \cdot \omega(\pi,\rho),
			%u_{\pi, \rho} \cdot (\id_\pi \tensor \gamma_\rho) \bigl( \gamma_\pi(b) \bigr) 
			%&= \gamma_{\pi \tensor \rho}(b) \cdot u_{\pi, \rho},
			\\
			\label{eq:u}
			\bigl( \one_\pi \tensor \omega(\rho,\sigma) \bigr) \cdot (\id_{\pi \tensor \rho} \tensor \gamma_\sigma)\bigl( \omega(\pi,\rho) \bigr)^*
			&=
			\omega(\pi, \rho \tensor \sigma)^* \cdot \omega(\pi \tensor \rho, \sigma)
			%(\one_\pi \tensor u_{\rho, \sigma}) \cdot (\id_{\pi \tensor \rho} \tensor \gamma_\sigma)(u_{\pi, \rho}^*)
			%&=
			%u_{\pi, \rho \tensor \sigma}^* \cdot u_{\pi \tensor \rho, \sigma}
			%(\one_\pi \tensor u_{\rho, \sigma}) \gamma_\sigma(u_{\pi, \rho})^* 
			%&= 
			%u_{\pi, \rho \tensor \sigma}^* u_{\pi \tensor \rho, \sigma}
			%u_{\pi, \rho \tensor \sigma} \cdot (\one_\pi \tensor u_{\rho,\sigma})
			%&= u_{\pi \tensor \rho, \sigma} \cdot (\id_{\pi \tensor \rho} \tensor \gamma_\sigma) (u_{\pi, \rho})
		\end{align}
		for all $\pi, \rho, \sigma \in \hat G$ and $b \in \aB$.
	\item
		Two factor systems $(\gamma, \omega)$ and $(\gamma', \omega')$ for $(G,\aB)$ are called \emph{conjugated} if there is a family $v = (v_\pi)_{\pi \in \hat G}$ of unitaries $v_\pi \in \aB \tensor \End(V_\pi)$ such that
		\begin{align*}
			%\label{eq:fact_equiv_gamma}
			\gamma_\pi' 
			&= \Ad[v_\pi^*] \circ \gamma_\pi,
			\\
			%\label{eq:fact_equiv_u}
			v_{\pi\tensor  \rho} \cdot \omega'(\pi,\rho) 
			&= \omega(\pi,\rho) \cdot (\id_\pi \tensor \gamma_\rho)(v_\pi) \cdot v_\rho .
		\end{align*}
	\end{enumerate}
\end{defn}
Equations \eqref{eq:gamma} and \eqref{eq:u} are twisted versions of the equations for a coaction and for a 2-cocycle. For this reason, we refer to condition \eqref{eq:gamma} as the \emph{coaction condition} and to condition~\eqref{eq:u} as the \emph{cocycle condition}. Immediate examples of factor systems are accordingly given a by coaction with a trivial cocycle or by a 2-cocycle with a trivial coaction (see Remark~\ref{rmk:factor_sys}). 

\begin{rmk}
	\label{rmk:factor_sys}
	\begin{enumerate}
	\item 
		The normalization condition $\omega(1,1) = \one_\aB$ can always be achieved by passing to a normalized, conjugated system (in the straightforwardly generalized sense). Together with the coaction and cocycle condition the normalization implies $\gamma_1 = \id_\aB$ and $\omega(\pi,1) = \one = \omega(1,\pi)$ for all $\pi \in \hat G$. 
	\item
		It is worth mention that we may rephrase things in terms of the group C\Star algebra $C^*(G)$, more precisely its multiplier algebra $MC^*(G)$. The family $(\gamma_\pi)_{\pi \in \hat G}$ may be equivalently written as a single unital \Star homomorphism $\gamma:\aB \to \aB \tensor MC^*(G)$ and the family $( \omega(\pi,\rho))_{\pi, \rho \in \hat G}$ as an isometry \mbox{$\omega \in \aB \tensor MC^*(G \times G)$}. Then Equations~\eqref{eq:gamma} and ~\eqref{eq:u} can be casted in the form
		\begin{align*}
			\omega \cdot \bigl( (\id \tensor \gamma) \circ \gamma \bigr)(b) 
			&= (\delta \circ \gamma)(b) \cdot \omega,
			\\
			(\one \tensor \omega) \cdot (\id \tensor \gamma)(\omega^*) 
			&=
			(\id \tensor \delta)(\omega^*) \cdot (\delta \tensor \id)(\omega) ,
		\end{align*}
		where $\delta:MC^*(G) \to MC^*(G \times G)$ denotes the usual comultiplication. This formulation is used for extending the theory to compact quantum groups. If~$\omega$ is unitary, the pair $(\gamma, \omega)$ is sometimes called a cocycle action (see \eg \cite{VaVa03}). 
	\end{enumerate}
\end{rmk}

The following statement summarizes that the construction from the beginning of this section indeed provides examples of factor systems.
%The following statement summarizes that the construction from the beginning of this section indeed provides examples of factor systems and that the factor system attached to a C\Star dynamical system is unique up to equivalence.
\begin{lemma}
	Let $(\aA, G, \alpha)$ be a weakly cleft C\Star dynamical system and let $\aB$ denote its fixed point algebra. Furthermore, let $s = (s_\pi)_{\pi \in \hat G}$ be a family of non-degenerate isometries $s_\pi \in A_2(\pi)$ with $s_1 = \one_\aB$. If $\gamma = (\gamma_\pi)_{\pi \in \hat G}$ and $\omega = \bigl(\omega(\pi,\rho) \bigr)_{\pi, \rho \in \hat G}$ are the associated families of homomorphisms and unitaries given by Equations~\eqref{eq:gamma_concrete} and ~\eqref{eq:u_concrete}, respectively, then the pair $(\gamma, \omega)$ is a factor system.
\end{lemma}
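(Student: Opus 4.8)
The plan is to exploit the two elementary identities that determine $\gamma$ and $\omega$: the \emph{intertwining relation} $(b\tensor\one_\pi)\,s_\pi = s_\pi\,\gamma_\pi(b)$ for all $b\in\aB$ (Remark~\ref{rmk:gamma_unique}), and the \emph{multiplicativity relation} $(s_\pi)_{12}\,(s_\rho)_{13} = s_{\pi\tensor\rho}\cdot\omega(\pi,\rho)$, which is just~\eqref{eq:u_concrete} read backwards together with non-degeneracy of $s_{\pi\tensor\rho}$ (indeed $(s_\pi)_{12}(s_\rho)_{13}=m_{\pi,\rho}(s_\pi\tensor s_\rho)\in A_2(\pi\tensor\rho)$). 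Throughout, the $s_\pi$, and hence $\gamma_\pi$ and $\omega(\pi,\rho)$, are understood to be extended to arbitrary representations as in the discussion preceding the statement, so that all the expressions in~\eqref{eq:gamma} and~\eqref{eq:u} make sense. Two preliminary facts should be recorded first: $\gamma_\pi$ is a unital $^*$-homomorphism by the preceding lemma; and $\omega(\pi,\rho)$ is an isometry, since $\omega(\pi,\rho)^*\omega(\pi,\rho)=(s_\rho)_{13}^*(s_\pi)_{12}^*\,s_{\pi\tensor\rho}s_{\pi\tensor\rho}^*\,(s_\pi)_{12}(s_\rho)_{13}$ collapses, using non-degeneracy applied to $(s_\pi)_{12}(s_\rho)_{13}\in A_2(\pi\tensor\rho)$ and $s_\pi^*s_\pi=s_\rho^*s_\rho=\one$, to $\one$. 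The normalisation $\omega(1,1)=\one_\aB$ is immediate from $s_1=\one_\aB$.

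For the coaction condition~\eqref{eq:gamma} I would evaluate $(b\tensor\one)\,(s_\pi)_{12}(s_\rho)_{13}$ in two ways. Rewriting $(s_\pi)_{12}(s_\rho)_{13}=s_{\pi\tensor\rho}\,\omega(\pi,\rho)$ and pushing $b$ through $s_{\pi\tensor\rho}$ by the intertwining relation for $\pi\tensor\rho$ gives $s_{\pi\tensor\rho}\,\gamma_{\pi\tensor\rho}(b)\,\omega(\pi,\rho)$. Alternatively, pushing $b$ first through $(s_\pi)_{12}$ produces the factor $\gamma_\pi(b)$; writing $\gamma_\pi(b)$ as a sum of elementary tensors and applying the intertwining relation for $\rho$ to its $\aB$-components moves it past $(s_\rho)_{13}$ at the cost of turning it into $(\id_\pi\tensor\gamma_\rho)\bigl(\gamma_\pi(b)\bigr)$, giving $s_{\pi\tensor\rho}\,\omega(\pi,\rho)\,(\id_\pi\tensor\gamma_\rho)\bigl(\gamma_\pi(b)\bigr)$. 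Multiplying both expressions on the left by $s_{\pi\tensor\rho}^*$ and using $s_{\pi\tensor\rho}^*s_{\pi\tensor\rho}=\one$ yields precisely~\eqref{eq:gamma}.

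The cocycle condition~\eqref{eq:u} is the crux. I would compute the triple product $(s_\pi)_{12}(s_\rho)_{13}(s_\sigma)_{14}$ in $\aA\tensor\End(V_\pi)\tensor\End(V_\rho)\tensor\End(V_\sigma)$ in the two natural associative groupings. Applying the multiplicativity relation twice, and moving the $\aB$-valued factor $\omega(\pi,\rho)$ past $(s_\sigma)_{14}$ exactly as in the coaction step (which manufactures $\id_{\pi\tensor\rho}\tensor\gamma_\sigma$), one grouping gives $s_{\pi\tensor\rho\tensor\sigma}\cdot\omega(\pi\tensor\rho,\sigma)\,(\id_{\pi\tensor\rho}\tensor\gamma_\sigma)(\omega(\pi,\rho))$, and the other gives $s_{\pi\tensor\rho\tensor\sigma}\cdot\omega(\pi,\rho\tensor\sigma)\,(\one_\pi\tensor\omega(\rho,\sigma))$ (identifying $\End(V_\pi)\tensor\End(V_\rho\tensor V_\sigma)$ with $\End(V_\pi\tensor V_\rho\tensor V_\sigma)$); cancelling $s_{\pi\tensor\rho\tensor\sigma}$ on the left gives the multiplicative cocycle identity $\omega(\pi\tensor\rho,\sigma)\,(\id_{\pi\tensor\rho}\tensor\gamma_\sigma)(\omega(\pi,\rho)) = \omega(\pi,\rho\tensor\sigma)\,(\one_\pi\tensor\omega(\rho,\sigma))$. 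To pass from this to the stated form~\eqref{eq:u} one left-multiplies by $\omega(\pi,\rho\tensor\sigma)^*$ (allowed, since $\omega$ is an isometry) and right-multiplies by $(\id_{\pi\tensor\rho}\tensor\gamma_\sigma)(\omega(\pi,\rho))^*$. Here lies the main obstacle: because $\omega$ is merely an isometry and not necessarily a unitary, the right-multiplication introduces the range projection $(\id_{\pi\tensor\rho}\tensor\gamma_\sigma)\bigl(\omega(\pi,\rho)\omega(\pi,\rho)^*\bigr)$, and one must check that it acts trivially on $\omega(\pi,\rho\tensor\sigma)^*\omega(\pi\tensor\rho,\sigma)$. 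I would settle this by substituting the explicit formulas $\omega(\pi,\rho\tensor\sigma)=s_{\pi\tensor\rho\tensor\sigma}^*(s_\pi)_{12}(s_{\rho\tensor\sigma})_{1,34}$, $\omega(\pi\tensor\rho,\sigma)=s_{\pi\tensor\rho\tensor\sigma}^*(s_{\pi\tensor\rho})_{1,23}(s_\sigma)_{14}$ and $\gamma_\sigma(b)=s_\sigma^*(b\tensor\one)s_\sigma$, and then reducing both sides of~\eqref{eq:u} to a common expression by repeated use of $s^*s=\one$ and by deleting the projections $s_\tau s_\tau^*$ wherever they act on an element of $A_2(\tau)$ (again via non-degeneracy). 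Everything beyond this last bookkeeping is a formal consequence of the two identities above.
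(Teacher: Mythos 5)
Your proof is correct and, at bottom, the same as the paper's: both the coaction condition and the cocycle condition are verified by pushing $\aB$-valued factors through the isometries via $(b\tensor\one)s_\pi = s_\pi\gamma_\pi(b)$ and cancelling range projections $s_\tau s_\tau^*$ against elements of (amplifications of) $A_2(\tau)$ by non-degeneracy. Your detour through the multiplicative form of the cocycle identity is harmless but redundant, since---as you yourself note---the passage from it to \eqref{eq:u} forces you back to exactly the direct reduction of both sides of \eqref{eq:u} to the common expression $s_{\rho\tensor\sigma}^*\,s_\pi^*\,s_{\pi\tensor\rho}\,s_\sigma$ that the paper carries out.
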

\begin{proof}
	In order to show the coaction condition \eqref{eq:gamma}, we recall that we have $s_\pi \gamma_\pi(b) = (b \tensor \one) s_\pi$ for every $b \in \aB$ (see Remark~\ref{rmk:gamma_unique}). Successively applying this relation and its starred version then yields Equation~\eqref{eq:gamma}, \ie, in $\aB \tensor \End(V_\pi) \tensor \End(V_\rho)$ we obtain
	\begin{align*}
		\omega(\pi, \rho) \cdot \gamma_\rho \bigl( \gamma_\pi(b) \bigr) 
		&= s_{\pi \tensor \rho}^* s_\pi s_\rho \gamma_\rho \bigl( \gamma_\pi(b) \bigr) 
		%= s_{\pi \tensor \rho}^* s_\pi \gamma_\pi(b) s_\rho
		= s_{\pi \tensor\rho}^* b s_\pi s_\rho 
		\\
		&= \gamma_{\pi \tensor \rho}(b) s_{\pi \tensor \rho}^* s_\pi s_\rho 
		=  \gamma_{\pi \tensor \rho}(b) \cdot \omega(\pi, \rho) 
	\end{align*}
	for all $\pi, \rho \in \hat G$ and $b \in \aB$.  
	In order to verify the cocycle condition \eqref{eq:u} let us first consider its left hand side 
	\begin{align*}
		\omega(\rho, \sigma) \cdot \gamma_\sigma \bigl( \omega(\pi, \rho) \bigr)^*
		&=
		s_{\rho \tensor \sigma}^* s_\rho s_\sigma \cdot s_\sigma^* (s_{\pi \tensor \rho}^* s_\pi s_\rho)^* s_\sigma
		=
		s_{\rho \tensor \sigma}^* s_\rho s_\sigma s_\sigma^* s_\rho^* s_\pi^* s_{\pi \tensor \rho} s_\sigma
	\end{align*}
	The rightmost product $s_\rho^* s_\pi^* s_{\pi \tensor \rho} s_\sigma$ lies in $\End(V_\pi \tensor V_\rho) \tensor A_2(\sigma)$. Therefore, the non-degeneracy of $s_\sigma$ allows us to cancel the factor $s_\sigma s_\sigma^*$. Similarly, the element $s_\pi^* s_{\pi \tensor \rho}$ lies in $\End(V_\pi) \tensor A_2(\rho)$, which allows us to cancel even further to obtain
	\begin{align*}
		\omega(\rho, \sigma)  \cdot \gamma_\sigma \bigl( \omega(\pi,\rho) \bigr)^*
		&=
		s_{\rho \tensor \sigma}^* s_\pi^* s_{\pi \tensor \rho} s_\sigma.
	\end{align*}
	For the right hand side of the cocycle condition, the non-degeneracy of $s_{\pi \tensor \rho \tensor \sigma}$ likewise implies
	\begin{align*}
		\omega(\pi, \rho \tensor \sigma)^* \cdot \omega(\pi \tensor \rho, \sigma) 
		&=
		s_{\rho \tensor \sigma}^* s_\pi^* s_{\pi \tensor \rho \tensor \sigma} s_{\pi \tensor \rho \tensor \sigma}^* s_{\pi \tensor \rho} s_\sigma
		= s_{\rho \tensor \sigma}^* s_\pi^* s_{\pi \tensor \rho} s_\sigma.
	\end{align*}
	Comparing with the simplification of the left side then yields the cocycle condition for all $\pi, \rho, \sigma \in \hat G$.
\end{proof}

The next result states that the weakly cleft C\Star dynamical systems are uniquely determined by their factor systems up to equivalence. 
\pagebreak[3]
\begin{thm}
	\label{thm:factor_sys_concrete}
	Let $(\aA, G, \alpha)$ and $(\aA', G, \alpha')$ be weakly cleft C\Star dynamical systems with the same fixed point algebra $\aB$ and let $(\gamma,\omega)$ and $(\gamma', \omega')$ be associated factor systems, respectively. 
	%Then $(\aA, G, \alpha)$ and $(\aA', G, \alpha')$ are equivalent if and only if $(\gamma, \omega)$ and $(\gamma', \omega')$ are conjugated.
	Then the following statements are equivalent:
	\begin{equivalence}
	\item	\label{en:dyn_sys_equiv}
		The dynamical systems $(\aA, G, \alpha)$ and $(\aA', G, \alpha')$ are equivalent.
	\item	\label{en:fac_sys_conj}
		The factor systems $(\gamma, \omega)$ and $(\gamma', \omega')$ are conjugated.
	\end{equivalence}
\end{thm}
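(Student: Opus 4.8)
The plan is to prove the two implications separately, relying throughout on the structure theory of Section~\ref{sec:decomp}. Recall from Lemma~\ref{lem:cleft_isom} that, once the non-degenerate isometries $s_\pi\in A_2(\pi)$ (with $s_1=\one_\aB$) underlying the factor system are fixed, each correspondence $A_2(\pi)$ is identified with $\aB\tensor\End(V_\pi)$ via $x\mapsto s_\pi x$, so that the left $\aB$-action becomes $b\mapsto\gamma_\pi(b)\,\cdot$ and the multiplication maps $m_{\pi,\rho}$ become~\eqref{eq:mult_map}; and recall from~\eqref{eq:right_mult} that the operators $\lambda(x)$ with $x\in A_2(\pi)$, $\pi\in\hat G$, generate $\lambda(\aA)$ and are expressed through the $m_{\pi,\rho}$ and the intrinsic maps $P_{\sigma,\pi\tensor\rho}$. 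Thus a weakly cleft system amounts, up to equivalence over $\aB$, to the family of bimodules $A_2(\pi)$ together with the maps $m_{\pi,\rho}$, and after trivialising to the factor system itself. Throughout I read~\ref{en:dyn_sys_equiv} as providing an equivariant isomorphism $\varphi:\aA\to\aA'$ that restricts to $\id_\aB$ on the common fixed point algebra (a nontrivial $\varphi|_\aB$ is first absorbed by transporting $(\gamma',\omega')$ along it); by Remark~\ref{rmk:factor_sys}(1) both factor systems are normalised.

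For \ref{en:dyn_sys_equiv}$\Rightarrow$\ref{en:fac_sys_conj}: given such a $\varphi$, the map $\varphi\tensor\id$ carries $A_2(\pi)$ onto $A_2'(\pi)$ (by equivariance) and $s_\pi$ to a non-degenerate isometry $(\varphi\tensor\id)(s_\pi)\in A_2'(\pi)$. Put $v_\pi:=(\varphi\tensor\id)(s_\pi)^*s_\pi'$; since $y\mapsto(s_\pi')^*y$ inverts the trivialisation of $A_2'(\pi)$ (Lemma~\ref{lem:cleft_isom}), one has $v_\pi\in\aB\tensor\End(V_\pi)$, and non-degeneracy of both isometries shows $v_\pi$ is unitary with $(\varphi\tensor\id)(s_\pi)=s_\pi'v_\pi^*$; extending $v$ to reducible $\pi$ by $v_\pi:=\sum_k t_k v_{\sigma_k}t_k^*$ (for a decomposition $V_\pi=\bigoplus_k V_{\sigma_k}$ with intertwiners $t_k$) keeps this consistent with the analogous extension of $(s_\pi)$. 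Applying $\varphi\tensor\id$, which fixes $\aB\tensor\End(V_\pi)$ pointwise, to $\gamma_\pi(b)=s_\pi^*(b\tensor\one)s_\pi$ and substituting gives $\gamma_\pi=\Ad[v_\pi]\circ\gamma_\pi'$, i.e.\ $\gamma_\pi'=\Ad[v_\pi^*]\circ\gamma_\pi$. Applying $\varphi\tensor\id\tensor\id$ to $\omega(\pi,\rho)=s_{\pi\tensor\rho}^*(s_\pi)_{12}(s_\rho)_{13}$, substituting, and commuting the factor $(v_\pi^*)_{12}$ past $(s_\rho')_{13}$ by means of $(b\tensor\one)s_\rho'=s_\rho'\gamma_\rho'(b)$ yields $\omega(\pi,\rho)=v_{\pi\tensor\rho}\,\omega'(\pi,\rho)\,(\id_\pi\tensor\gamma_\rho')(v_\pi^*)\,(\one_\pi\tensor v_\rho^*)$; combining this with $\gamma_\pi'=\Ad[v_\pi^*]\circ\gamma_\pi$ rewrites it as $v_{\pi\tensor\rho}\,\omega'(\pi,\rho)=\omega(\pi,\rho)\,(\id_\pi\tensor\gamma_\rho)(v_\pi)\,v_\rho$. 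Hence $(\gamma,\omega)$ and $(\gamma',\omega')$ are conjugated.

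For \ref{en:fac_sys_conj}$\Rightarrow$\ref{en:dyn_sys_equiv}: let $v=(v_\pi)$ conjugate $(\gamma,\omega)$ to $(\gamma',\omega')$ (extended to reducible representations compatibly with the $s_\pi$), and under the trivialisations of Lemma~\ref{lem:cleft_isom} define $\psi_\pi:A_2(\pi)\to A_2'(\pi)$ by $\psi_\pi(s_\pi x):=s_\pi'v_\pi^*x$. Unitarity of $v_\pi$ makes $\psi_\pi$ an isomorphism of right Hilbert $\aB$-modules and right $\aB\tensor\End(V_\pi)$-linear; the relation $\gamma_\pi'=\Ad[v_\pi^*]\circ\gamma_\pi$ makes it left $\aB$-linear; and, evaluating on $s_\pi\tensor s_\rho$ using~\eqref{eq:mult_map} for both systems, the cocycle relation for $v$ shows the family intertwines the multiplication maps, $\psi_{\pi\tensor\rho}\circ m_{\pi,\rho}=m_{\pi,\rho}'\circ(\psi_\pi\tensor\psi_\rho)$ --- this is the previous computation read backwards. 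Transporting along $A(\pi)\cong A_2(\bar\pi)$ and using that the isotypic components are mutually orthogonal and total in $L^2$, the $\psi_\pi$ glue to a unitary $\Psi:L^2(\aA)\to L^2(\aA')$ of right Hilbert $\aB$-modules; right $\End(V_\pi)$-linearity of the $\psi_\pi$, together with the description of the $G$-action on the dual isotypic components as right multiplication by $\pi_g^*$, gives $\Psi U_g=U_g'\Psi$. Finally, \eqref{eq:right_mult}, the multiplicativity just established, and the intrinsic nature of the $P_{\sigma,\pi\tensor\rho}$ give $\Psi\lambda(x)\Psi^*=\lambda'(\psi_\pi(x))$ for all $x\in A_2(\pi)$, $\pi\in\hat G$; since such operators generate $\lambda(\aA)$ resp.\ $\lambda'(\aA')$, the map $\Ad[\Psi]$ restricts to a $^*$-isomorphism $\varphi:\aA\to\aA'$, which is $G$-equivariant (as $\Psi U_g=U_g'\Psi$) and restricts to $\id_\aB$ (normalisation forces $v_1=\one_\aB$, hence $\psi_1=\id$).

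The main obstacle is the verification of the cocycle-type identity: the relation $v_{\pi\tensor\rho}\,\omega'(\pi,\rho)=\omega(\pi,\rho)(\id_\pi\tensor\gamma_\rho)(v_\pi)v_\rho$ in the first implication and the equivalent multiplicativity $\psi_{\pi\tensor\rho}\circ m_{\pi,\rho}=m_{\pi,\rho}'\circ(\psi_\pi\tensor\psi_\rho)$ in the second. Both require careful leg-numbering bookkeeping, repeated use of the intertwining identities $(b\tensor\one)s_\pi=s_\pi\gamma_\pi(b)$ to move the conjugating unitaries past the isometries, and switching between the $\gamma$- and $\gamma'$-versions via the coaction-type conjugacy relation, as well as checking compatibility with the chosen extensions of $(s_\pi)$ and $(v_\pi)$ to reducible representations. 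The module-theoretic verifications (inner products, one-sided linearity) and the assembly via~\eqref{eq:right_mult} are routine given Section~\ref{sec:decomp}.
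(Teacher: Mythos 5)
Your proposal is correct and follows essentially the same route as the paper: both directions hinge on the unitaries $v_\pi$ relating two trivialising isometries, the same computation using $(b \tensor \one)s_\rho = s_\rho\,\gamma_\rho(b)$ to move $v_\pi$ past $s_\rho$ for the cocycle relation, and the reassembly of the equivalence from the Hilbert-module maps between the $A_2(\pi)$'s via Equation~\eqref{eq:right_mult}. The only (harmless) difference is that you push the isometries $s_\pi$ forward along the given equivariant isomorphism, whereas the paper reduces the first implication to comparing two choices of non-degenerate isometries within a single system.
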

\begin{proof}
	As a distinction we add a prime to all notions referring to $(\aA', G, \alpha')$.
	\begin{enumerate}
	\item 
		To prove that \ref{en:dyn_sys_equiv} implies \ref{en:fac_sys_conj} it suffice to show that for the same dynamical system $(\aA, G, \alpha)$ different choices of non-degenerate isometries $s_\pi \in A_2(\pi)$, $\pi \in \hat G$, lead to conjugated factor systems. For this purpose let $s_\pi$ and $s_\pi'$,  $\pi \in \hat G$, be two such choices and let us denote by $(\gamma, \omega)$ and $(\gamma', \omega')$ the associated factor systems, respectively. Consider first a fixed representation $\pi \in \hat G$. By Lemma~\ref{lem:cleft_isom} there are unique elements $v_\pi^{}, v_\pi' \in \aB \tensor \End(V_\pi)$ with $s_\pi' = s_\pi v_\pi$ and $s_\pi = s_\pi' v_\pi'$. Uniqueness implies $v_\pi^{} v_\pi' = \one = v_\pi' v_\pi^{}$ and, since $s_\pi$ is a isometry, we also have $v_\pi^* v_\pi^{} = v_\pi^* s_\pi^* s_\pi^{} v_\pi^{} = (s_\pi')^* (s_\pi') = \one$.  Hence $v_\pi$ and $v_\pi'$ are unitaries with $v_\pi' = v_\pi^*$. For the \Star homomorphisms of the factor systems we therefore obtain
		\begin{equation*}
			\gamma_\pi'(b) 
			= (s_\pi')^* (b\tensor \one) s_\pi' 
			= v_\pi^* s_\pi^* (b\tensor \one) s_\pi v_\pi 
			= v_\pi^* \gamma_\pi(b) v_\pi,
		\end{equation*}
		for every $\pi \in \hat G$ and $b \in \aB$. For the isometries of the factor systems we may use the non-degeneracy of $s_\rho$ and the fact that $v_\pi s_\rho v_\rho \in \End(V_\pi) \tensor A_2(\rho)$ to conclude for all $\pi, \rho \in \hat G$:
		\begin{align*}
			v_{\pi \tensor \rho} \, \omega'(\pi, \rho) 
			&= v_{\pi \tensor \rho} (s_{\pi \tensor  \rho} v_{\pi \tensor \rho})^* s_\pi v_\pi s_\rho v_\rho
			= s_{\pi \tensor \rho}^* s_\pi (s_\rho^{} s_\rho^*) v_\pi s_\rho v_\rho
			\\
			&= s_{\pi \tensor \rho}^* s_\pi s_\rho \gamma_\rho(v_\pi) v_\rho
			= \omega(\pi, \rho) \,  \gamma_\rho(v_\pi) \, v_\rho.
		\end{align*}
	\item
		For the converse implication, \ref{en:fac_sys_conj} $\Rightarrow$ \ref{en:dyn_sys_equiv}, let $s_\pi \in A_2(\pi)$ and $s_\pi' \in A_2'(\pi)$, $\pi \in \hat G$, by non-degenerate isometries with associated factor systems $(\gamma, \omega)$ and $(\gamma', \omega')$, respectively. Furthermore, let $v_\pi$, $\pi \in \hat G$, be a family of unitaries realizing the conjugation of the factor systems as in Definition~\ref{defn:factor_sys}. For every representation $\pi$ of $G$ the map
		\begin{equation*}
			\varphi_\pi: A_2'(\pi) \to A_2(\pi), 
			\quad
			s_\pi' x  \mapsto s_\pi v_\pi x
		\end{equation*}
		for all $x \in \aB \tensor \End(V_\pi)$ is well-defined by Lemma~\ref{lem:cleft_isom}. Moreover, it is straightforward to check that $\varphi_\pi$ is a unitary map between the two right Hilbert $\aB$-modules. Since $L^2(\aA) = \overline{\bigoplus}_{\pi \in \hat G} A_2(\pi)$ and likewise for $\aA'$, taking direct sums yields a unitary map
		\begin{equation*}
			V:L^2(\aA') \to L^2(\aA), 
			\quad
			V := \bigoplus\nolimits_{\pi \in \hat G} \varphi_\pi .
		\end{equation*}
		Furthermore, the maps $\varphi_\pi$, $\pi \in \hat G$, intertwine with the multiplication maps, that is, for all $\pi, \rho \in \hat G$ we have 
		\begin{equation*}
			m_{\pi, \rho} \bigl( \varphi_\pi(s_\pi x) \tensor \varphi_\rho(s_\rho y) \bigr) = \varphi_{\pi \tensor \rho} \bigl( m_{\pi, \rho}'(s_\pi x \tensor s_\pi y) \bigr)
		\end{equation*}
		for all $x \in \aB \tensor \End(V_\pi)$ and $y \in \aB \tensor \End(V_\rho)$. Together with Equation~\eqref{eq:right_mult} this shows that the homomorphism
		\begin{equation*}
			\Phi(x) := Vx V^\star
		\end{equation*} 
		maps $\aA' \subseteq \End \bigl( L^2(\aA') \bigr)$ into $\aA \subseteq \End \bigl( L^2(\aA) \bigr)$ and hence may be restricted to an injective \Star homomorphism $\Phi:\aA' \to \aA$. Exchanging the role of $s_\pi$ and $s_\pi'$ shows that $\Phi$ is in fact a \Star isomorphism. Obviously, we have $\varphi_\pi(x \pi_g^*) = \varphi_\pi(x) \pi_g^*$ for all $x \in A_2(\pi)$ and $g \in G$. It follows that $V$ intertwines the $G$-action on $L^2(\aA)$ and $L^2(\aA')$, that is, we have $V U_g = U_g' V$ for all $g \in G$. Consequently, $\Phi$ intertwines $\alpha_g = \Ad[U_g]$ on $\aA$ and $\alpha'_g= \Ad[U_g']$ on $\aA'$.
		\qedhere
	\end{enumerate}
\end{proof}

The following lemma rephrases freeness in terms of the multiplication maps defined in \eqref{multiplication maps}. As a consequence we find that cleft dynamical systems are characterized factor systems where the isometries $\omega(\pi, \rho)$ are in fact unitaries.

\begin{lemma}\label{lem:characterization freeness}
	A C\Star dynamical system $(\aA, G, \alpha)$ is free if and only if for all $\pi \in \hat G$ the multiplication map
	\begin{equation*}
		m_{\pi, \bar\pi}:A_2(\pi) \tensor_\aB A_2(\bar\pi) \to A_2(\pi \tensor \bar\pi),
		\quad
		m_{\pi, \bar\pi}(x \tensor \bar y) := x_{12} \cdot \bar y_{13}
	\end{equation*}
	has dense range or, equivalently, is surjective.
\end{lemma}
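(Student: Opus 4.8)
The plan is to relate freeness, which by the remark after the definition is equivalent to requiring that $\overline{A_2(\pi)A_2(\pi)^*}$ contains the unit $\one_{\aA \tensor \End(V)}$ for every representation $\pi$, to the range of the multiplication maps $m_{\pi,\bar\pi}$. The key bridge is an identification, compatible with the $G$-actions, between $A_2(\pi) \tensor \End(V_\pi)$-type data and the products $A_2(\pi)A_2(\pi)^*$: writing $y \in A_2(\rho)$ as $y \in \aA \tensor \End(V_\rho)$, the element $y^*$ lives (up to the standard identification of $\End(\bar V_\rho)$ with $\End(V_\rho)$ via the conjugate) in something like $A_2(\bar\rho)$, so that products $xy^*$ with $x,y \in A_2(\pi)$ correspond to elements of $A_2(\pi) \tensor_\aB A_2(\bar\pi)$ mapped by $m_{\pi,\bar\pi}$ into $A_2(\pi \tensor \bar\pi)$. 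First I would make this correspondence precise: using the conjugate representation and the canonical intertwiner $V_\pi \tensor \bar V_\pi \to \C$ (or its adjoint $\C \to V_\pi \tensor \bar V_\pi$, the "coevaluation"), show that the linear span of $\{x_{12}\, \bar y_{13} : x,y \in A_2(\pi)\}$ inside $A_2(\pi \tensor \bar\pi)$ maps, under the partial trace / contraction $P_{1, \pi\tensor\bar\pi}: \End(V_\pi \tensor \bar V_\pi) \to \End(V_1) = \C$ applied in the representation legs, precisely onto $A_2(\pi)A_2(\pi)^*$ sitting inside $C(\pi)$ — indeed $C(\pi)$ is exactly the image of $A_2(\pi\tensor\bar\pi)$ under this contraction, since the trivial representation occurs in $\pi \tensor \bar\pi$ with multiplicity one and $\bigl(\alpha_g \tensor \Ad[\pi_g]\bigr)$-invariance is what one gets after contracting the $\pi\tensor\bar\pi$-invariance.

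Granting this dictionary, the proof splits into the two implications. For "free $\Rightarrow$ $m_{\pi,\bar\pi}$ surjective": the image of $m_{\pi,\bar\pi}$ is a right $\aB\tensor\End(V_\pi)\tensor\End(V_{\bar\pi})$-submodule of the correspondence $A_2(\pi\tensor\bar\pi)$, hence its closure is a closed submodule, and it suffices to show it contains a non-degenerate isometry for $\pi\tensor\bar\pi$ (or simply an element whose range projection is the full unit). Freeness gives, after the contraction above, the unit of $\aA\tensor\End(V_\pi)$ in the closure of $A_2(\pi)A_2(\pi)^*$; chasing this back through the coevaluation intertwiner shows that $s_{\pi\tensor\bar\pi}$ — or rather the canonical element of $A_2(\pi\tensor\bar\pi)$ coming from the trivial subrepresentation, which is essentially $\omega(\pi,\bar\pi)$ composed with $s_{\pi\tensor\bar\pi}$ — lies in the closed range; since every element of $A_2(\pi\tensor\bar\pi)$ is obtained from a non-degenerate isometry by right multiplication (Lemma~\ref{lem:cleft_isom} in the weakly cleft case, and in general one reduces to the weakly cleft situation or argues directly that a closed submodule of a Hilbert module containing a full-range element is everything), density follows. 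Conversely, for "$m_{\pi,\bar\pi}$ surjective $\Rightarrow$ free": apply the contraction $P_{1,\pi\tensor\bar\pi}$ to the (now all of) $A_2(\pi\tensor\bar\pi)$; since this contraction is surjective onto $C(\pi)$ and carries the range of $m_{\pi,\bar\pi}$ onto $\overline{A_2(\pi)A_2(\pi)^*}$, we get $\overline{A_2(\pi)A_2(\pi)^*} = C(\pi)$, i.e. freeness for $\pi$. One must also remark that it suffices to check the condition for irreducible $\pi$ (the general case follows by decomposing and using that both $A_2(-)$ and $C(-)$ and the multiplication maps behave additively under direct sums of representations), and to recall why "dense range" and "surjective" coincide here: the image is a (not a priori closed) sub-correspondence of $A_2(\pi\tensor\bar\pi)$, but if it is dense then its contraction is a dense, hence by the ideal property (the span $A_2(\pi)A_2(\pi)^*$ is a $^*$-ideal in $C(\pi)$ with closure equal to $C(\pi)$ as soon as it contains an approximate unit) all of $C(\pi)$; then a standard Hilbert-module argument — a dense submodule of a countably generated Hilbert module that is "full" is the whole module, or more elementarily one produces the full-range isometry inside it — upgrades density to surjectivity.

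The main obstacle I expect is making the identification in the first paragraph genuinely clean: carefully setting up the conjugate-representation bookkeeping so that $x \tensor \bar y \mapsto x_{12}\bar y_{13}$ and $xy^*$ really do match up under the coevaluation/contraction, keeping track of which leg lives in $V_\pi$ versus $\bar V_\pi$, and verifying that the relevant intertwiner maps are exactly the $P_{\sigma,\pi\tensor\bar\pi}$ appearing in Equation~\eqref{eq:right_mult}. Once that bookkeeping is fixed the two implications are short. A secondary, more technical point is the density-versus-surjectivity equivalence: I would prove it by noting that the closure of the range of $m_{\pi,\bar\pi}$ is a closed sub-correspondence of $A_2(\pi\tensor\bar\pi)$ which, being a Hilbert module over a unital $\aB$ and containing (once we know freeness, or from density directly) an element with full range projection, must be everything — this is the same full-range argument used in the proof of Lemma~\ref{lem:cleft_n_sat}.
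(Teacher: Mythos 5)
Your overall strategy --- translating the range of $m_{\pi,\bar\pi}$ inside $A_2(\pi\tensor\bar\pi)$ into the product set $A_2(\pi)A_2(\pi)^*$ inside $C(\pi)$ via the conjugate representation --- is exactly the route the paper takes, but two of your steps do not go through as written. First, both your ``free $\Rightarrow$ surjective'' direction and your density-to-surjectivity upgrade invoke a non-degenerate isometry $s_{\pi\tensor\bar\pi}$, the element $\omega(\pi,\bar\pi)$, and Lemma~\ref{lem:cleft_isom}, i.e.\ the weakly cleft hypothesis. The lemma is stated for arbitrary C\Star dynamical systems, and a general free system need not be weakly cleft, so there is no $s_{\pi\tensor\bar\pi}$ to chase the unit back to and $A_2(\pi\tensor\bar\pi)$ need not be a free module; your fallback (``reduce to the weakly cleft situation or produce a full-range element directly'') is not available, respectively not carried out. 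The paper sidesteps both issues at once: $m_{\pi,\bar\pi}$ is an isometry of correspondences over $\aB$, so its range is automatically closed and dense range equals surjectivity with no further argument, and the identification of the range is an exact equality of linear spans rather than an inclusion up to closure, so no approximate-unit or full-range-element argument is ever needed.

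Second, the identification itself --- which you correctly flag as the main obstacle and then defer --- is the entire content of the proof, and your sketch of it is off: the compression $P_{1,\pi\tensor\bar\pi}$ by the isometric intertwiner $\C\to V_\pi\tensor\bar V_\pi$ maps $A_2(\pi\tensor\bar\pi)$ into $A_2(1)=\aB$, not onto $C(\pi)\subseteq\aA\tensor\End(V_\pi)$, so ``$C(\pi)$ is the image of $A_2(\pi\tensor\bar\pi)$ under this contraction'' cannot be literally correct. What is needed instead is the index reshuffle $\varphi:A_2(\pi\tensor\bar\pi)\to C(\pi)\tensor\End(V_\pi)$, $\varphi(x)_{(i,j),(k,\ell)}=x_{(i,k),(j,\ell)}$, which is a linear bijection. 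Composing with $m_{\pi,\bar\pi}$ gives $\psi(x\tensor\bar y)_{(i,j),(k,\ell)}=x_{i,j}\,\bar y_{k,\ell}$, whose range is a bimodule for $\one\tensor\End(V_\pi)$ and hence of the form $J\tensor\End(V_\pi)$ for a subspace $J\subseteq C(\pi)$; the partial trace computation $(\id\tensor\Tr)\bigl(\psi(x\tensor(y^*)^t)\bigr)=xy^*$ then identifies $J$ with the linear span $A_2(\pi)A_2(\pi)^*$ on the nose, and both implications drop out simultaneously. Without this computation the proposal remains a plan rather than a proof.
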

\begin{proof}
	First we note that $m_{\pi, \bar\pi}$ is an isometry of correspondences over $\aB$ and hence it is surjective if and only if it has dense range. Let us fix a finite-dimensional representation $(\pi, V)$ of $G$ and denote by $d$ its dimension. For sake of a convenient notation we fix a basis of $V$ and write elements $x \in \aA \tensor \End(V)$ as matrices $x = (x_{i,j})_{1 \le i,j \le d}$ with entries in~$\aA$. Likewise we write elements of $\aA \tensor \End(\bar V)$ and $\aA \tensor \End(V \tensor \bar V)$ as matrices with respect to the dual basis on $\bar V$ and the product basis, respectively. 
	%The multiplication map $m_{\pi, \bar\pi}$ in this notation takes the form
	%\begin{equation*}
	%	m_{\pi, \bar\pi}(x \tensor \bar y)_{(i,j), (k,\ell)} = x_{i,k} \cdot \bar y_{j,\ell} 
	%\end{equation*}
	%for $x = (x_{i,j})_{i,j} \in A_2(\pi)$ and $\bar y = (\bar y_{k,\ell})_{k, \ell} \in A_2(\bar\pi)$. 
	A straightforward computation shows that the transpose map $A_2(\bar\pi) \to A_2(\pi)^*$, $x = (x_{i,j})_{i,j} \mapsto x^t := (x_{j,i})_{i,j}$ is a linear bijection. Moreover, similar computations show that the map  
	\begin{equation*}
		\varphi:A_2(\pi \tensor \bar\pi) \to C(\pi) \tensor \End(V),
		\quad
		\varphi(x)_{(i,j), (k,\ell)} := x_{(i,k), (j,\ell)}
	\end{equation*}
	is a linear bijection. Now consider the composition $\psi := \varphi \circ m_{\pi, \bar\pi}$, which takes the concrete form
	\begin{equation*}
		\psi(x \tensor \bar y)_{(i,j), (k,\ell)} = x_{i,j} \, \bar y_{k,\ell}
	\end{equation*}
	for $x \in A_2(\pi)$ and $\bar y \in A_2(\bar\pi)$. Since $A_2(\pi)$ and $A_2(\bar\pi)$ are right $\End(V)$- and $\End(\bar V)$-modules, respectively, the range of $\psi$ is a bimodule for $\one_{C(\pi)} \tensor \End(V)$. It follows that the range of $\psi$ is of the form $J \tensor \End(V)$ for some subspace $J \subseteq C(\pi)$. 
	%psi(x tensor y) = ( (x_ij y_kl)_ik )_jl
	Furthermore, for elements $x,y \in A_2(\pi)$ we may put $\bar y := (y^*)^t$ and find 
	\begin{align*}
		(\id_{C(\pi)} \tensor \Tr) \bigl (\psi(x \tensor \bar y) \bigr)
			&= \biggl(\sum_{\ell=1}^d x_{i,\ell} \, y_{\ell,j}^* \biggr)_{i,j}
			= x y^*,
	\end{align*}
	which shows that $J = A_2(\pi) A_2(\pi)^*$.
	We conclude that the multiplication map $m_{\pi, \bar\pi}$ is surjective if and only if $\psi$ has full range if and only if $C(\pi) = J = A_2(\pi) A_2(\pi)^*$.
\end{proof}

\begin{thm}
	\label{thm:free_cleft}
	For a weakly cleft C\Star dynamical system $(\aA, G, \alpha)$ the following statements are equivalent:
	\begin{equivalence}
	\item
		The dynamical system is cleft or, equivalently, free.
	\item
		For some factor system $(\gamma, \omega)$---and hence for all factor systems---all elements $\omega(\pi,\rho)$ for $\pi, \rho \in \hat G$ are unitary.
	\end{equivalence}
\end{thm}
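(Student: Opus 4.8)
The plan is to assemble the statement from Lemmas~\ref{lem:cleft_n_sat}, \ref{lem:cleft_isom} and~\ref{lem:characterization freeness} together with one observation on the elements $\omega(\pi,\rho)$. Since the system is assumed weakly cleft, the equivalence \emph{cleft $\Leftrightarrow$ free} is exactly Lemma~\ref{lem:cleft_n_sat}, so it remains to relate freeness to unitarity of the $\omega$'s. I would fix one associated factor system, i.e.\ a choice of non-degenerate isometries $s_\pi\in A_2(\pi)$ (extended to all representations as in the construction), and prove the two implications: (a) if the system is free then all $\omega(\pi,\rho)$, $\pi,\rho\in\hat G$, are unitary; (b) if all $\omega(\pi,\rho)$, $\pi,\rho\in\hat G$, are unitary then the system is free. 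Since (a) and (b) hold for \emph{every} associated factor system, this already yields the ``for some, hence for all'' phrasing: a factor system with unitary $\omega$'s forces freeness by~(b), which by~(a) forces every associated factor system to have unitary $\omega$'s; and freeness is equivalent to cleftness by Lemma~\ref{lem:cleft_n_sat}.

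For~(b) the point is to read off the range of the multiplication map from Equation~\eqref{eq:mult_map}. Applying it to $\rho=\bar\pi$ gives $m_{\pi,\bar\pi}(s_\pi x\tensor s_{\bar\pi}y)=s_{\pi\tensor\bar\pi}\,\omega(\pi,\bar\pi)\,(\id_\pi\tensor\gamma_{\bar\pi})(x)\,(\one_\pi\tensor y)$. Using that $\gamma_{\bar\pi}$ is unital, the elements $(\id_\pi\tensor\gamma_{\bar\pi})(x)(\one_\pi\tensor y)$ already exhaust all of $\aB\tensor\End(V_\pi\tensor V_{\bar\pi})$ (take $x=\one_\aB\tensor e$ and $y=b\tensor f$). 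Since $\omega(\pi,\bar\pi)$ is unitary, left multiplication by it is a bijection of this algebra, and by Lemma~\ref{lem:cleft_isom} the map $w\mapsto s_{\pi\tensor\bar\pi}w$ carries $\aB\tensor\End(V_\pi\tensor V_{\bar\pi})$ onto $A_2(\pi\tensor\bar\pi)$. Hence $m_{\pi,\bar\pi}$ is surjective for every $\pi\in\hat G$, and Lemma~\ref{lem:characterization freeness} gives freeness. (The same computation shows, after the module identifications of Lemma~\ref{lem:cleft_isom}, that $m_{\pi,\rho}$ is simply left multiplication by $\omega(\pi,\rho)$ on $\aB\tensor\End(V_\pi\tensor V_\rho)$, so $m_{\pi,\rho}$ is surjective precisely when $\omega(\pi,\rho)$ is unitary.)

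For~(a) the key observation is that in a free system every non-degenerate isometry $s\in A_2(\pi)$---for any representation $\pi$---is automatically unitary: the projection $p:=ss^*$ satisfies $p(xy^*)=(ss^*x)\,y^*=xy^*$ for all $x,y\in A_2(\pi)$, so $p$ acts as a unit on $\overline{A_2(\pi)A_2(\pi)^*}=C(\pi)$, which contains $\one_{\aA\tensor\End(V_\pi)}$ because the system is free; hence $ss^*=\one$. Applying this to all the chosen $s_\pi$ and their extensions, Equation~\eqref{eq:u_concrete} presents $\omega(\pi,\rho)=s_{\pi\tensor\rho}^*\,(s_\pi)_{12}\,(s_\rho)_{13}$ as a product of unitaries, hence as a unitary. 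I expect the only genuine bookkeeping hurdle to be in~(b)---checking that the twist by $(\id_\pi\tensor\gamma_{\bar\pi})$ and the leg reshuffling in~\eqref{eq:mult_map} do not shrink the range, i.e.\ the elementary verification that $(\id_\pi\tensor\gamma_{\bar\pi})(x)(\one_\pi\tensor y)$ runs over the full algebra; everything else is either routine or already established.
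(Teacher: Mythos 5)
Your proposal is correct and follows essentially the same route as the paper: the direction ``unitary $\omega$'s $\Rightarrow$ free'' is the paper's argument verbatim (rewrite $A_2(\pi\tensor\rho)=s_{\pi\tensor\rho}\,\omega(\pi,\rho)\cdot\aB\tensor\End(V_\pi\tensor V_\rho)=s_\pi s_\rho\cdot\aB\tensor\End(V_\pi\tensor V_\rho)$ to get surjectivity of the multiplication maps, then invoke Lemma~\ref{lem:characterization freeness} and Lemma~\ref{lem:cleft_n_sat}), while for the converse the paper simply picks unitary $s_\pi$'s using cleftness, whereas you re-run the argument inside Lemma~\ref{lem:cleft_n_sat} to show that freeness makes \emph{every} non-degenerate isometry unitary --- a minor variant that has the small advantage of yielding the ``for all factor systems'' clause directly rather than via conjugacy.
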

\begin{proof}
	By Lemma~\ref{lem:cleft_n_sat}, for a cleft system we may choose unitary elements $s_\pi$ in $A_2(\pi)$. Then it follows that the elements $\omega(\pi,\rho) = s_{\pi \tensor \rho}^* s_\pi^{} s_\rho^{}$ ($\pi, \rho \in \hat G$) of the corresponding factor system are unitary, too, which proves one implication. 
	%For the converse implication we take advantage of \cite[Cor.\,4.16]{SchWa15}, which states that the dynamical system is free if for all $\pi, \rho \in \hat G$ the multiplication map 
	%\begin{equation*}
		%m_{\pi, \rho}:A_2(\pi) \tensor A_2(\rho) \to A_2(\pi \tensor \rho), 
		%\quad m_{\pi, \rho}(x \tensor y) = x_{12} \, y_{13}
	%\end{equation*}
	%has dense range. 
	For the converse implication we take advantage of Lemma \ref{lem:characterization freeness}. Indeed, suppose that we have non-degenerated isometries \mbox{$s_\pi \in A_2(\pi)$} for each $\pi \in \hat G$ such that the elements $\omega(\pi,\rho) = s_{\pi \tensor \rho}^* s_\pi s_\rho$ of the corresponding factor system are unitaries for all $\pi, \rho \in \hat G$. 
	Then $A_2(\pi \tensor \rho)$ is given by 
	\begin{align*}
		A_2(\pi \tensor \rho) 
		&= s_{\pi \tensor \rho} \cdot  \aB \tensor \End(V_\pi \tensor V_\rho) 
		\\
		&= s_{\pi \tensor \rho} \, \omega(\pi,\rho) \cdot  \aB \tensor \End(V_\pi \tensor V_\rho) 
		= s_\pi s_\rho \cdot \aB \tensor \End(V_\pi \tensor V_\rho) 
		\\
		&= s_\pi \cdot \bigl[ \one_\aB \tensor \End(V_\pi) \tensor \one_\rho \bigr] \cdot s_\rho \cdot \bigl[ \aB \tensor \one_\pi \tensor \End(V_\rho) \bigr]
	\end{align*}
	Since $s_\pi \cdot \bigl( \one_\aB \tensor \End(V_\pi) \bigl) \subseteq A_2(\pi)$ and $s_\rho \cdot \bigl( \aB \tensor \End(V_\rho) \bigr) = A_2(\rho)$, we conclude that $m_{\pi, \rho}$ is surjective and hence the dynamical system is free. 
\end{proof}

\begin{cor}
	\label{cor:comm=>free}
	Every weakly cleft C\Star dynamical system $(\aA, G, \alpha)$ with commutative or finite-dimensional fixed point algebra $\aB$ is cleft. 
\end{cor}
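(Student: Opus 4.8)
The plan is to reduce everything to Theorem~\ref{thm:free_cleft}, which says that a weakly cleft system is cleft precisely when the isometries $\omega(\pi,\rho)$ of one---equivalently, of every---associated factor system are unitary. Since a weakly cleft system always carries an associated factor system $(\gamma,\omega)$, it suffices to show that, whenever $\aB$ is commutative or finite-dimensional, each isometry $\omega(\pi,\rho) \in \aB \tensor \End(V_\pi \tensor V_\rho)$ is automatically a unitary. The key observation is that $\aB \tensor \End(V_\pi \tensor V_\rho) \cong M_N(\aB)$ with $N := d_\pi d_\rho$, so the whole statement follows from the purely C\Star-algebraic fact that $M_N(\aB)$ contains no non-unitary isometries under either hypothesis on $\aB$.

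For finite-dimensional $\aB$ I would argue that $M_N(\aB)$ is again finite-dimensional, hence carries a faithful tracial state $\tau$; if $v^*v = \one$ then $vv^*$ is a projection with $\one - vv^* \ge 0$ and $\tau(\one - vv^*) = \tau(\one) - \tau(v^*v) = 0$, so faithfulness forces $vv^* = \one$. For commutative $\aB = \Cont(X)$ I would instead use $M_N(\aB) \cong \Cont\bigl(X, M_N(\C)\bigr)$ and the fact that for each $x \in X$ the evaluation $\ev_x \colon M_N(\aB) \to M_N(\C)$ is a unital \Star homomorphism: it sends an isometry $v$ to an isometry in the finite-dimensional algebra $M_N(\C)$, which is necessarily unitary, so $\ev_x(vv^* - \one) = 0$ for all $x$, and since the evaluations separate points we conclude $vv^* = \one$. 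Feeding $v = \omega(\pi,\rho)$ back into Theorem~\ref{thm:free_cleft} then completes the proof.

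I do not expect a genuine obstacle: the content lies entirely in recognizing that $\aB \tensor \End(V)$ is a matrix algebra over $\aB$ and that both classes of algebras are \emph{finite} in the relevant sense. The one point requiring care is that a general commutative unital C\Star algebra need not possess a faithful state, so the tracial argument used in the finite-dimensional case cannot be transplanted verbatim to the commutative case; the pointwise evaluation argument sidesteps this cleanly.
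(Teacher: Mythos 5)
Your proof is correct and follows exactly the paper's route: the paper's own proof is the one-line observation that under either hypothesis on $\aB$ an isometry in $\aB \tensor \End(V)$ with $V$ finite-dimensional is automatically unitary, which is then fed into Theorem~\ref{thm:free_cleft}. You have merely supplied the (correct) details of that observation via the faithful trace in the finite-dimensional case and pointwise evaluation in the commutative case.
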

\begin{proof}
	Under the hypothesis on $\aB$ an isometry $\omega \in \aB \tensor \End(V)$ for finite-dimensional $V$ is automatically unitary.
\end{proof}

In Section~\ref{sec:construction} we will discuss whether the converse of Theorem~\ref{thm:factor_sys_concrete} holds, that is, whether every factor system gives rise to a C\Star dynamical system. We postpone this problem for the moment and continue to investigate the relation between the form of the factor system and the type of the dynamical system. First, let us characterize classical systems.

\begin{cor}
	\label{cor:commutative_syst}
	Let $(\aA, G, \alpha)$ be a cleft C\Star dynamical system with a commutative fixed point algebra~$\aB$ and let $(\gamma, \omega)$ be an arbitrary associated factor system. Then $\aA$ is commutative if and only if 
	\begin{align*}
		\gamma_\pi(b) &= b \tensor \one_\pi,
		&
		&\text{and}
		&
		\omega(\rho, \pi) &= \sigma \bigl( \omega(\pi,\rho) \bigr)
	\end{align*}
	for all $\pi, \rho \in \hat G$ and $b \in \aB$, where $\sigma$ denotes the tensor flip of $\End(V_\pi) \tensor \End(V_\rho)$.
\end{cor}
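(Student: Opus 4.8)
The plan is to translate the two displayed conditions into commutation relations inside a convenient spanning set of $\aA$, and then to observe that those relations together with commutativity of $\aB$ are exactly what makes $\aA$ commutative. Fix an arbitrary associated factor system $(\gamma,\omega)$, coming from a choice of non-degenerate isometries $s_\pi\in A_2(\pi)$ with $s_1=\one_\aB$; since the system is cleft the $s_\pi$ are in fact unitaries (Lemma~\ref{lem:cleft_n_sat}). After fixing orthonormal bases of the $V_\pi$, write $s_\pi=\sum_{i,j}s^\pi_{ij}\tensor e_{ij}$ with $s^\pi_{ij}\in\aA$. The first step is to record that $\aA$ is the closed linear span of $\{\,s^\pi_{ij}\,b : \pi\in\hat G,\ i,j,\ b\in\aB\,\}$: indeed $\aA$ is the closed span of $\bigcup_{\pi\in\hat G}A(\bar\pi)$, and under the correspondence isomorphism $A_2(\pi)\to A(\bar\pi)$, $x\mapsto(\id_\aA\tensor\Tr)(x)$, the submodule $A_2(\pi)=s_\pi\cdot\bigl(\aB\tensor\End(V_\pi)\bigr)$ is carried onto the closed span of the $s^\pi_{ji}\,b$. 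Since the commutator is bilinear and norm\ndash continuous, $\aA$ is commutative if and only if any two of these spanning elements commute; because $\aB$ is commutative, this reduces to two statements: (i) each $s^\pi_{ij}$ commutes with every $b\in\aB$, and (ii) each $s^\pi_{ij}$ commutes with every $s^\rho_{kl}$ (for all $\pi,\rho\in\hat G$).

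The second step matches (i) and (ii) with the two displayed conditions. For (i): by Remark~\ref{rmk:gamma_unique} the map $\gamma_\pi$ is characterised by $(b\tensor\one_\pi)s_\pi=s_\pi\gamma_\pi(b)$, so $\gamma_\pi(b)=b\tensor\one_\pi$ for all $b$ is literally the same as $(b\tensor\one_\pi)s_\pi=s_\pi(b\tensor\one_\pi)$, which in coordinates reads $b\,s^\pi_{ij}=s^\pi_{ij}\,b$. For (ii): starting from the formula \eqref{eq:u_concrete}, $\omega(\pi,\rho)=s_{\pi\tensor\rho}^{*}\,(s_\pi)_{12}\,(s_\rho)_{13}$, and using that the extended family $(s_\pi)_\pi$ is natural with respect to the flip intertwiner $\Sigma\colon V_\pi\tensor V_\rho\to V_\rho\tensor V_\pi$ — so that conjugation by $\Sigma$ sends $s_{\pi\tensor\rho}$ to $s_{\rho\tensor\pi}$ — a short leg\ndash numbering computation yields $\sigma\bigl(\omega(\pi,\rho)\bigr)=s_{\rho\tensor\pi}^{*}\,(s_\pi)_{13}\,(s_\rho)_{12}$, whereas $\omega(\rho,\pi)=s_{\rho\tensor\pi}^{*}\,(s_\rho)_{12}\,(s_\pi)_{13}$. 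Cancelling the unitary $s_{\rho\tensor\pi}$, the identity $\omega(\rho,\pi)=\sigma\bigl(\omega(\pi,\rho)\bigr)$ becomes $(s_\rho)_{12}\,(s_\pi)_{13}=(s_\pi)_{13}\,(s_\rho)_{12}$, which in coordinates is precisely $s^\rho_{kl}\,s^\pi_{ij}=s^\pi_{ij}\,s^\rho_{kl}$ for all indices, i.e.\ (ii). Combining the two steps proves the implication $\Leftarrow$.

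For the implication $\Rightarrow$ one runs the same computations backwards. If $\aA$ is commutative then (i) is immediate, and (ii) holds because in $(s_\pi)_{13}$ and $(s_\rho)_{12}$ the matrix\ndash unit factors live in disjoint tensor legs, so the two orders of the product differ only by exchanging the (commuting) $\aA$\ndash entries. Feeding (i) back through Remark~\ref{rmk:gamma_unique} — equivalently, $\gamma_\pi(b)=s_\pi^{*}(b\tensor\one_\pi)s_\pi=b\tensor\one_\pi$ once $b\tensor\one_\pi$ commutes with $s_\pi$ — gives the first displayed equation, and feeding (ii) through the leg\ndash numbering computation of the previous paragraph turns it into $\omega(\rho,\pi)=\sigma\bigl(\omega(\pi,\rho)\bigr)$. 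Since in this direction the argument used nothing about $(\gamma,\omega)$ beyond that it comes from some choice of unitary isometries $s_\pi$, the two conditions hold for one associated factor system if and only if they hold for all of them, so the statement is well posed.

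The one genuinely delicate point — and the step I would handle most carefully — is the naturality of the chosen isometries under intertwiners, concretely the claim that conjugation by the vector\ndash space flip $\Sigma$ carries $s_{\pi\tensor\rho}$ to $s_{\rho\tensor\pi}$. This is not part of the definition of an individual $s_\pi$ but follows from the way the family was extended to reducible representations: if one decomposes $\pi\tensor\rho$ into irreducibles by isometric intertwiners $v_k$, then the $\Sigma v_k$ decompose $\rho\tensor\pi$ into the same irreducible pieces, and the stated independence of the extension from the choice of intertwiners forces $s_{\rho\tensor\pi}=(\one_\aA\tensor\Sigma)\,s_{\pi\tensor\rho}\,(\one_\aA\tensor\Sigma)^{*}$. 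The remaining work — keeping straight which tensor leg carries $\aB$, $V_\pi$ or $V_\rho$ as one applies $\sigma$ — is routine bookkeeping.
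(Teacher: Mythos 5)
Your proof is correct and follows essentially the same route as the paper: both directions rest on the relations $(b\tensor\one)s_\pi=s_\pi\gamma_\pi(b)$ and $\omega(\pi,\rho)=s_{\pi\tensor\rho}^{*}(s_\pi)_{12}(s_\rho)_{13}$ together with the naturality of the extended family $(s_\pi)_\pi$ under the flip intertwiner, which you rightly single out as the one delicate point. The only difference is presentational: where the paper concludes the converse via the flip-symmetry of the multiplication maps $m_{\pi,\rho}$ and an appeal to the general construction of Section~\ref{sec:decomp}, you make that last step explicit with the spanning set $s^\pi_{ij}\,b$.
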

\begin{proof}
	First suppose that $\aA$ is commutative. Let $s_\pi \in A_2(\pi)$, $\pi \in \hat G$, be a family of non-degenerated isometries and denote by $(\gamma, u)$ the associated factor system. Since $\aA$ is commutative, every element $b \tensor \one$ commutes with $s_\pi$ in $\aA \tensor \End( V_\pi)$. It follows that $\gamma_\pi(b) = s_\pi^*(b \tensor \one)s_\pi = b \tensor \one$ for all $b \in \aB$. Likewise the elements $\smash{(s_\pi)_{12}}$ and $\smash{(s_\rho)_{13}}$ commute in $\aA \tensor \End(V_\pi) \tensor \End(V_\rho)$. It follows that
	\begin{equation*}
		\sigma \bigl( \omega(\rho, \pi) \bigr)
		= \sigma \bigl( s_{\rho \tensor \pi}^* s_\rho s_\pi \bigr) 
		= s_{\pi \tensor \rho}^* \, s_\pi \, s_\rho 
		=  \omega(\pi, \rho) .
	\end{equation*}
	Conversely, let $s_\pi \in A_2(\pi)$, $\pi \in \hat G$, be a family of non-degenerate isometries and suppose the asserted condition on the corresponding factor system holds. Consider the family of multiplication maps 
	\begin{gather*}
		m_{\pi, \rho}:A_2(\pi) \tensor_\aB A_2(\rho) \to A_2(\pi \tensor \rho),
		\quad
		m_{\pi, \rho}(x \tensor y) = x_{12} \, y_{13}.
	\end{gather*}
	Looking at Equation~\eqref{eq:mult_map}, we have for all $x \in \aB \tensor \End(V_\pi)$ and $y \in \aB \tensor \End(V_\rho)$:
	\begin{align*}
		\sigma \bigl( m_{\rho, \pi}(s_\rho y \tensor s_\pi x) \bigr) 
		&= \sigma \bigl( s_{\rho \tensor \pi} \, \omega(\rho, \pi) \, y_{12} \, x_{13} \bigr)
		= s_{\pi \tensor \rho} \, \omega(\pi, \rho) \, x_{12} \, y_{13}
		\\
		&= m_{\pi, \rho}(s_\pi x \tensor s_\rho y),
	\end{align*}
	because $x_{12}$ and $y_{13}$ commute in $\aB \tensor \End(V_\pi) \tensor \End(V_\rho)$. From the general construction of Section~\ref{sec:decomp} we may then deduce that $\aA$ is commutative.
\end{proof}

\begin{rmk}	\label{rmk:centre}
	\begin{enumerate}
	\item
		Corollary~\ref{cor:commutative_syst} provides a classification of cleft topological principal bundles. 
	\item 
		Similar arguments as in the proof of Corollary~\ref{cor:commutative_syst} show the following more general statement for a weakly cleft system: The center of $\aB$ is contained in the center of $\aA$ if and only if for every factor system $(\gamma, \omega)$ the homomorphisms act trivially on the center of $\aB$, \ie, $\gamma_\pi(z) = z \tensor \one$ for every central element $z \in \aB$ and $\pi \in \hat G$.
	\end{enumerate}
\end{rmk}

For a given group $G$ and a given fixed point algebra $\aB$ there clearly seems to be a trivial dynamical system, namely $(\aB \tensor C(G), G, \id \tensor r)$. From our discussions in Example~\ref{expl:trivial_bundle} we immediately derive from Theorem~\ref{thm:factor_sys_concrete} that for a weakly cleft dynamical system $(\aA, G, \alpha)$ the following statements are equivalent:
\begin{equivalence}
\item
	The dynamical system $(\aA, G, \alpha)$ is equivalent to $\bigl( \aB \tensor C(G), G, \id \tensor r \bigr)$.
\item
	%The definitions $\gamma_\pi(b) := b \tensor \one$ and $\omega(\pi, \rho) := \one$ for all $\pi, \rho \in \hat G$ give rise to a factor system of $(\aA, G, \alpha)$.
	Every factor system of $(\aA, G, \alpha)$ is conjugated to the factor system $(\gamma, \omega)$ given by
	%The following definitions give rise to a factor system for $(\aA, G, \alpha)$:
	\begin{align*}
		\gamma_\pi(b) &:= b \tensor \one
		&
		\omega(\pi, \rho) &:= \one,
		&
		&\pi, \rho \in \hat G, \; b \in \aB
	\end{align*}
%\item
%	Every factor system for $(\aA, G, \alpha)$ is of the form 
%	\begin{align*}
%		\gamma_\pi(b) &= v_\pi \, (b \tensor \one) v_\pi^*,
%		&
%		\omega(\pi, \rho) &= v_{\pi \tensor \rho}^* \, (v_\pi)_{12} \, (v_\rho)_{13} 
%	\end{align*}
%	for some family of unitaries $v_\pi$, $\pi \in \hat G$, in $\aB \tensor \End(V_\pi)$.
\end{equivalence}

A next simple class of C\Star dynamical systems are those which are essentially ergodic actions, that is, the dynamical system $(\aA, G, \alpha)$ arises from an ergodic dynamical system $(\aA_0, G, \alpha_0)$ by tensoring with $\aB$. 

\pagebreak[3]
\begin{cor}
	\label{cor:essentially_ergodic}
	For a C\Star dynamical system $(\aA, G, \alpha)$ the following statements are equivalent:
	\begin{equivalence}
	\item
		$(\aA, G, \alpha)$ is isomorphic to $(\aB \tensor \aA_0, G, \id_\aB \tensor \alpha_0)$ with an ergodic cleft C\Star dynamical system $(\aA_0, G, \alpha_0)$.
	\item
		$(\aA, G, \alpha)$ is (weakly) cleft and admits a factor system of the form
		\begin{align*}
			\gamma_\pi(b) &= b \tensor \one,
			&
			\omega(\pi, \rho) &\in \one_\aB \tensor \End(V_\pi \tensor V_\rho),
			&
			&\pi, \rho \in \hat G, \; b \in \aB.
		\end{align*}
	\end{equivalence}
\end{cor}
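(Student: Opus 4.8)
The plan is to prove the two implications separately, in each case computing the factor system of a product system $\aB\tensor\aA_0$ explicitly via Equations~\eqref{eq:gamma_concrete} and~\eqref{eq:u_concrete} and then invoking Theorem~\ref{thm:factor_sys_concrete}.

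\textbf{From (a) to (b).} Assuming $(\aA,G,\alpha)$ is equivariantly isomorphic to $(\aB\tensor\aA_0,G,\id_\aB\tensor\alpha_0)$ with $(\aA_0,G,\alpha_0)$ ergodic and cleft, the fixed point algebra is $\aB\tensor\one$, and since the $\aB$-leg is untouched by the action one has $A_2(\pi)=\aB\tensor(\aA_0)_2(\pi)$ for each irreducible $\pi$. Picking a unitary $s_\pi^0\in(\aA_0)_2(\pi)$ and setting $s_\pi:=\one_\aB\tensor s_\pi^0$ (with $s_1=\one_\aB$) gives non-degenerate isometries, so in particular the system is cleft. A direct computation then yields $\gamma_\pi(b)=s_\pi^*(b\tensor\one)s_\pi=b\tensor\one_\pi$ and $\omega(\pi,\rho)=\one_\aB\tensor\omega_0(\pi,\rho)$, where $\omega_0(\pi,\rho)=(s_{\pi\tensor\rho}^0)^*(s_\pi^0)_{12}(s_\rho^0)_{13}\in\aA_0\tensor\End(V_\pi\tensor V_\rho)$ is the element associated with the ergodic system. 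Because $\aA_0^G=\C\one$, we have $\omega_0(\pi,\rho)\in\C\one\tensor\End(V_\pi\tensor V_\rho)$, hence $\omega(\pi,\rho)\in\one_\aB\tensor\End(V_\pi\tensor V_\rho)$, which is the required form.

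\textbf{From (b) to (a).} Suppose $(\aA,G,\alpha)$ is weakly cleft with a factor system $(\gamma,\omega)$ of the stated form. Each $\omega(\pi,\rho)$ is an isometry in the finite-dimensional C\Star algebra $\one_\aB\tensor\End(V_\pi\tensor V_\rho)\cong\End(V_\pi\tensor V_\rho)$, hence a unitary, so by Theorem~\ref{thm:free_cleft} the system is in fact cleft. Writing $\omega(\pi,\rho)=\one_\aB\tensor\omega_0(\pi,\rho)$ with $\omega_0(\pi,\rho)\in\End(V_\pi\tensor V_\rho)$ unitary, a routine check shows that $(\gamma_0,\omega_0)$, with $\gamma_0$ the trivial coaction of $G$ on $\C$, is a factor system for $(G,\C)$: the coaction condition~\eqref{eq:gamma} holds trivially, and upon deleting the $\one_\aB$-leg the cocycle condition~\eqref{eq:u} for $(\gamma,\omega)$ becomes precisely~\eqref{eq:u} for $\omega_0$. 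By the correspondence between factor systems and weakly cleft dynamical systems (Theorem~\ref{thm:factor_sys_concrete} together with the construction of Section~\ref{sec:construction}; alternatively Wassermann's classification~\cite{Wass88}) there is an ergodic C\Star dynamical system $(\aA_0,G,\alpha_0)$ realizing $(\gamma_0,\omega_0)$, which is cleft by Corollary~\ref{cor:comm=>free}. Then $(\aB\tensor\aA_0,G,\id_\aB\tensor\alpha_0)$ is weakly cleft with fixed point algebra $\aB$, and by the computation of the previous part its factor system with respect to the isometries $\one_\aB\tensor s_\pi^0$ equals $(\gamma,\omega)$. Hence the two systems have conjugate factor systems, and Theorem~\ref{thm:factor_sys_concrete} yields the asserted equivalence.

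I expect the main obstacle to be the input used in the converse direction: one needs an ergodic (necessarily cleft) C\Star dynamical system realizing the unitary $2$-cocycle $\omega_0$ on $\hat G$, which is exactly the content of the explicit construction of Section~\ref{sec:construction} (or of Wassermann). If one prefers an argument internal to $\aA$, one would take $\aA_0\subseteq\aA$ to be the C\Star-subalgebra generated by the matrix coefficients of the non-degenerate isometries $s_\pi$; the relations $\gamma_\pi(b)=b\tensor\one_\pi$ show that $\aA_0$ commutes with $\aB$ and is $\alpha$-invariant, and it remains to verify that $\aA_0$ is ergodic and that the multiplication map $\aB\tensor\aA_0\to\aA$ is an isomorphism. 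This last step is the delicate one, as it essentially re-proves Theorem~\ref{thm:factor_sys_concrete} in the present special case, the injectivity resting on the linear independence over $\aB$ of the coefficients of the $s_\pi$ granted by Lemma~\ref{lem:cleft_isom}.
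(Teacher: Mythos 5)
Your proposal is correct and follows essentially the same route as the paper: both directions pick the isometries $s_\pi = \one_\aB \tensensor s_\pi^{(0)}$ to compute the factor system of the product system explicitly, observe that an isometry in the finite-dimensional algebra $\one_\aB \tensor \End(V_\pi \tensor V_\rho)$ is unitary so that $\omega$ is a unitary $2$-cocycle in the usual sense, realize it by an ergodic cleft system via Wassermann (or the construction of Section~\ref{sec:construction}), and conclude with Theorem~\ref{thm:factor_sys_concrete}. The additional ``internal'' argument you sketch at the end is not needed and is not the path the paper takes.
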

\begin{proof}
	For the system $(\aB \tensor \aA_0, G, \id_\aB \tensor \alpha_0)$ the generalized isotypic component of $\pi \in \hat G$ is of the form
	\begin{equation*}
		A_2(\pi) = \aB \tensor A_2^{(0)}(\pi),
	\end{equation*}
	where $A_2^{(0)}(\pi) \subseteq \aA_0 \tensor \End(V_\pi)$ denotes the generalized isotypic component of $(\aA_0, G, \alpha_0)$. Since $\alpha_0$ is a cleft action, for each $\pi \in \hat G$ we find a unitary element $s_\pi^{(0)} \in A_2^{(0)}(\pi)$ and obtain a unitary $s_\pi \in A_2(\pi)$ by putting $s_\pi := \one_\aB \tensor s_\pi^{(0)}$. Then the corresponding factor system defined by Equations~\eqref{eq:gamma_concrete} and~\eqref{eq:u_concrete} is of the asserted form. 

	Conversely, suppose that a factor system $(\gamma, \omega)$ for $(\aA, G, \alpha)$ has the asserted form. Then $\omega(\pi, \rho)$ is a unitary for each $\pi, \rho \in \hat G$ and the cocycle condition \eqref{eq:u} states that $\omega =  \bigl( \omega(\pi, \rho) \bigr)_{\pi, \rho}$ forms a unitary 2-cocycle for $G$ in the usual sense. By \cite{Wass89} this 2-cocycle provides an ergodic cleft C\Star dynamical system $(\aA_0, G, \alpha_0)$ whose factor system is given by $\omega$ together with the trivial homomorphisms. (This may also be derived from the construction presented in Section~\ref{sec:construction}.) By the arguments in the first part of the proof the dynamical system $(\aB \tensor \aA_0, G, \id_\aB \tensor \alpha_0)$ then has the factor system $(\gamma, \omega)$ and, by Theorem~\ref{thm:factor_sys_concrete}, it is isomorphic to $(\aA, G, \alpha)$.
\end{proof}

\begin{expl}
	Let $\aB = \End(\hilb H)$ for some Hilbert space $\hilb H$. We claim that up to isomorphism every cleft dynamical system with fixed point algebra $\End(\hilb H)$ is  of the form $(\End(\hilb H) \tensor \aA_0, G, \id \tensor \alpha_0)$ with an ergodic cleft action $(\aA_0, G, \alpha_0)$. 
	This can be proved directly by looking at minimal projections $p \in \End(\hilb H)$ and the corresponding restricted dynamical system on the algebra~$p\aA p$. Alternatively, we may consider an arbitrary factor system $(\gamma, \omega)$ for $(\aA, G, \alpha)$. Then for each representation $\pi \in \hat G$ the \Star homomorphism $\gamma_\pi:\End(\hilb H) \to \End(\hilb H) \tensor \End(V_\pi)$ is necessarily of the form $\gamma_\pi(b) = v_\pi^*(b \tensor \one)v_\pi$, $b \in \aB$, for some unitary $v_\pi \in \End(\hilb H) \tensor \End(V_\pi)$. Passing from $(\gamma, \omega)$ to a conjugated factor system, we~may without loss of generality assume $\gamma_\pi(b) = b \tensor \one$ for all $b \in \aB$. Then the coaction condition states that for all $\pi, \rho \in \hat G$ the element $\omega(\pi, \rho) \in \End(\hilb H \tensor V_\pi \tensor V_\rho)$ commutes with $\End(\hilb H) \tensor \one_{\pi \tensor \rho}$ and hence lies in $\one \tensor \End(V_\pi \tensor V_\rho)$. Finally, Corollary~\ref{cor:essentially_ergodic} proves the claim.
\end{expl}

%\begin{rmk}
%	We are confident that weakly cleft C\Star dynamical systems that admit a factor system $(\gamma, \omega)$ satisfying $\gamma_\pi(b) = b \tensor \one$ for all $\pi \in \hat G$ are bundles of ergodic actions over the spectrum of the centre of $\aB$. However, we could not find a reference for this claim. For a commutative algebra $\aB$ this can be deduced from Remark~\ref{rmk:centre} and Corollary~\ref{cor:essentially_ergodic}. But for noncommutative $\aB$ more technical effort is required, which should not be a focus of this presentation.
%\end{rmk}

\pagebreak[3]
\begin{cor}
	\label{cor:ergodic_subsyst}
	For a C\Star dynamical system $(\aA, G, \alpha)$ and a unital C\Star sub\-al\-ge\-bra $\aB_0 \subseteq \aB$ the following statements are equivalent:
	\begin{equivalence}
	\item	\label{en:inv_subalg}
		There is an $\alpha$-invariant C\Star subalgebra $\aB_0 \subseteq \aA_0 \subseteq \aA$ such that the restricted system $(\aA_0, G, \alpha|_{\aA_0})$ is cleft with fixed point algebra $\aB_0$.
	\item	\label{en:factor_sys_B0}
		There is a factor system $(\gamma,\omega)$ of $\aA$ with unitary elements $\omega(\pi, \rho) \in \aB_0 \tensor \End(V_\pi \tensor V_\rho)$ and $\gamma_\pi(\aB_0) \subseteq \aB_0 \tensor \End(V_\pi)$ for all $\pi, \rho \in \hat G$.
	\end{equivalence}
	In this case $\aA_0$ can be chosen to have the factor system $\bigl( \gamma_\pi|_{\aB_0}, \omega(\pi, \rho) \bigr)_{\pi, \rho \in \hat G}$ and $\aA$ is generated by $\aA_0$ and $\aB$.
\end{cor}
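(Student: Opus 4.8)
The plan is to prove the two implications separately; \ref{en:inv_subalg}$\Rightarrow$\ref{en:factor_sys_B0} is routine, and \ref{en:factor_sys_B0}$\Rightarrow$\ref{en:inv_subalg} carries the weight. For the first implication I would start from an invariant C\Star subalgebra $\aB_0 \subseteq \aA_0 \subseteq \aA$ with $\aA_0^G = \aB_0$ such that $(\aA_0, G, \alpha|_{\aA_0})$ is cleft, and, using cleftness, choose for every $\pi \in \hat G$ a \emph{unitary} $s_\pi \in A_2^{(0)}(\pi) \subseteq \aA_0 \tensor \End(V_\pi)$ with $s_1 := \one_\aB$, extended to reducible representations as at the opening of Section~\ref{sec:factor_sys} (the extension of unitaries along a decomposition is again unitary). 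Since $A_2^{(0)}(\pi) \subseteq A_2(\pi)$, these $s_\pi$ are non-degenerate isometries of $(\aA,G,\alpha)$ as well, so they determine a factor system $(\gamma,\omega)$ of $\aA$ via \eqref{eq:gamma_concrete} and \eqref{eq:u_concrete}. As $\gamma_\pi(b) = s_\pi^*(b \tensor \one)s_\pi$ and $\omega(\pi,\rho) = s_{\pi \tensor \rho}^* (s_\pi)_{12}(s_\rho)_{13}$ are computed inside $\aA_0 \tensor \End(V_\pi)$ and $\aA_0 \tensor \End(V_\pi \tensor V_\rho)$ respectively, they take values there; being $G$-fixed (they land in $\aB \tensor \End(\cdot)$ by the construction of a factor system), they in fact lie in $\aB_0 \tensor \End(\cdot) = (\aA_0 \tensor \End(\cdot))^G$, and $\omega(\pi,\rho)$, a product of unitaries, is unitary. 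Thus $(\gamma,\omega)$ has the form demanded in \ref{en:factor_sys_B0}, and the same formulas show it restricts to the factor system $(\gamma_\pi|_{\aB_0}, \omega(\pi,\rho))$ of $(\aA_0, G, \alpha|_{\aA_0})$; finally $\aA$ is generated by $\aA_0$ and $\aB$ since, by Lemma~\ref{lem:cleft_isom}, $A(\bar\pi) = (\id \tensor \Tr)\bigl( s_\pi \cdot (\aB \tensor \End(V_\pi)) \bigr) = (\id \tensor \Tr)\bigl( A_2^{(0)}(\pi) \bigr) \cdot \aB \subseteq \aA_0 \cdot \aB$ for all $\pi$.

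For the converse I would first check that the restricted data $(\gamma_\pi|_{\aB_0}, \omega(\pi,\rho))_{\pi,\rho \in \hat G}$ is a factor system for the pair $(G, \aB_0)$: each $\gamma_\pi|_{\aB_0}: \aB_0 \to \aB_0 \tensor \End(V_\pi)$ is a unital \Star homomorphism by hypothesis, $\omega(1,1) = \one_\aB = \one_{\aB_0}$, and the coaction and cocycle conditions \eqref{eq:gamma}, \eqref{eq:u} hold because they hold for $(\gamma,\omega)$ and, thanks to $\gamma_\pi(\aB_0) \subseteq \aB_0 \tensor \End(V_\pi)$ and $\omega(\pi,\rho) \in \aB_0 \tensor \End(V_\pi \tensor V_\rho)$, every term occurring in them stays in $\aB_0 \tensor \End(\cdot)$ (using that the extension of $\gamma$ and $\omega$ to reducible representations through intertwiners preserves the $\aB_0$-components). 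Since each $\omega(\pi,\rho)$ is unitary, Theorem~\ref{thm:free_cleft} together with the correspondence established in Section~\ref{sec:construction} yields a \emph{cleft} C\Star dynamical system $(\aA_0, G, \alpha_0)$ with fixed point algebra $\aB_0$ and associated factor system $(\gamma_\pi|_{\aB_0}, \omega(\pi,\rho))$; similarly the full factor system $(\gamma,\omega)$ is realised by a weakly cleft system which, by Theorem~\ref{thm:factor_sys_concrete}, is equivalent to $(\aA,G,\alpha)$.

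The remaining task — and the main obstacle — is to realise this abstract $\aA_0$ as an $\alpha$-invariant C\Star subalgebra of $\aA$. The construction of Section~\ref{sec:construction} builds the dynamical system attached to $(\gamma,\omega)$ on a completion of $\bigoplus_{\pi \in \hat G} \aB \tensor \End(V_\pi)$, with multiplication given by \eqref{eq:mult_map} (hence by $\gamma$ and $\omega$) and with an involution likewise expressed through $\gamma$ and $\omega$; because $\gamma_\pi(\aB_0) \subseteq \aB_0 \tensor \End(V_\pi)$ and $\omega(\pi,\rho) \in \aB_0 \tensor \End(V_\pi \tensor V_\rho)$, the subspace $\bigoplus_\pi \aB_0 \tensor \End(V_\pi)$ is closed under both operations and its closure is (the image of) $\aA_0$; transporting along the equivalence of Theorem~\ref{thm:factor_sys_concrete} then identifies $\aA_0$ with an $\alpha$-invariant C\Star subalgebra of $\aA$ carrying the factor system $(\gamma_\pi|_{\aB_0}, \omega(\pi,\rho))$, and $\aA$ is generated by $\aA_0$ and $\aB$ because $\aB \tensor \End(V_\pi) = \bigl( \aB_0 \tensor \End(V_\pi) \bigr) \cdot \bigl( \aB \tensor \one_\pi \bigr)$. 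The delicate point, the only one that truly needs the explicit form of the construction rather than just \eqref{eq:mult_map} and the hypotheses on $\gamma,\omega$, is the behaviour of the involution: one must know that the \Star operation sends the ``$\aB_0$-part'' of the $\pi$-component into the ``$\aB_0$-part'' of the $\bar\pi$-component. Phrased concretely — if instead one takes $\aA_0$ to be the closed \Star subalgebra of $\aA$ generated by $\aB_0$ and the matrix coefficients of the chosen $s_\pi$ — this amounts to showing that $s_\pi^*$, which after transposition is a non-degenerate isometry of $A_2(\bar\pi)$, differs from $s_{\bar\pi}$ by a unitary in $\aB_0 \tensor \End(V_{\bar\pi})$, and this is exactly the information encoded, in the model of Section~\ref{sec:construction}, by the fact that the involution there is built from $\gamma$ and $\omega$ (and, on the trivial subrepresentation of $\pi \tensor \bar\pi$, from the unitarity of $\omega(\pi,\bar\pi) \in \aB_0 \tensor \End(V_\pi \tensor V_{\bar\pi})$). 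Once this compatibility is in place, the remaining verifications — $\alpha$-invariance of $\aA_0$, that its fixed point algebra is $\aB_0$, and that its factor system is the restricted one — are immediate.
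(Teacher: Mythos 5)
Your proposal is correct and follows essentially the same route as the paper: the forward implication by choosing unitaries $s_\pi \in A_2^{(0)}(\pi)$ and observing that the resulting $\gamma_\pi(b)$ and $\omega(\pi,\rho)$ are $G$-fixed elements of $\aA_0 \tensor \End(\cdot)$, hence lie in $\aB_0 \tensor \End(\cdot)$; and the converse by passing, via Theorems~\ref{thm:factor_2_sys} and~\ref{thm:factor_sys_concrete}, to the explicit model $A = \bigoplus_{\pi} \aB \tensor \End(V_\pi)$ in which $\bigoplus_{\pi} \aB_0 \tensor \End(V_\pi)$ is an $\alpha$-invariant \Star subalgebra (the involution point you flag is exactly what the paper settles with Remark~\ref{rmk:unique_J}), with Theorem~\ref{thm:free_cleft} upgrading weakly cleft to cleft. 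This is precisely the paper's two-part argument (the converse appearing as Corollary~\ref{cor:remaining}).
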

To have a concise statement we claim equivalence here but at this point we prove only one implication. The converse implication will be deduced later as Corollary~\ref{cor:remaining}.
\begin{proof}[Proof of \ref{en:inv_subalg} $\Rightarrow$ \ref{en:factor_sys_B0}]
	For an $\alpha$-invariant subalgebra $\aA_0 \subseteq \aA$ its generalized isotypic component of $\pi \in \hat G$, denoted by $A_2^{(0)}(\pi)$, is contained in the generalized isotypic component $A_2(\pi)$ of $\aA$. If the action on $\aA_0$ is cleft, then for each $\pi \in \hat G$ we find a unitary element $s_\pi \in A_2^{(0)}(\pi)$. The elements $s_\pi$, $\pi \in \hat G$, give rise to a factor system for $\aA_0$ and to a factor system for $\aA$. Both factor systems share the same unitaries $\omega(\pi,\rho) = s_{\pi \tensor \rho}^* s_\pi^{} s_\rho^{} \in \aB_0 \tensor \End(V_\pi \tensor V_\rho)$ for all $\pi, \rho \in \hat G$. The \Star homomorphisms of the two factor systems are both given by $\gamma_\pi(b) = s_\pi^* (b \tensor \one) s_\pi^{}$ for $\pi \in \hat G$, that is, they only differ by their domains $\aB_0$ and $\aB$, respectively, and the corresponding codomains.
\end{proof}

\begin{expl}
	\label{expl:SU_2}
	We would like to present a simple example that is not a tensor product with a free ergodic action.  For this purpose consider the group $G = \SU_2$ and the commutative C\Star algebra $C(X)$ for an arbitrary compact space $X$ on which we fix a non-trivial continuous reflection $h:X \to X$, $h \circ h = \id_X$. The group $\SU_2$ has up to equivalence for every dimension precisely one irreducible representation. So we may identify $\hat G$ with the natural numbers starting with $V_0$ denoting the trivial representation and $V_1 = \C^2$ as the standard representation of $\SU_2$.
	As a factor system $(\gamma, \omega)$ we choose the coaction \mbox{$\gamma_n:\Cont(X) \to \Cont(X) \tensor \End(V_n)$}, $n \in \N$, given by 
	\begin{equation*}
		\gamma_n(b) := \begin{cases}
			b \tensor \one &\text{if $n$ is even, } \\
			(b \circ h) \tensor \one &\text{if $n$ is odd}
		\end{cases}
	\end{equation*}
	accompanied by the trivial 2-cocycle $\omega(n,m) := \one$ for $n,m \in \N$. The coaction condition for $\gamma = (\gamma_n)_{n\in \N}$ is easily verified, \eg, using the Clebsch-Gordan formula for $\SU_2$.
	Suppose for a moment that there exists a C\Star dynamical system $(\aA, G, \alpha)$ with the factor system $(\gamma,\omega)$. (This will be proven in Section~\ref{sec:construction}.) It follows from Corollary~\ref{cor:ergodic_subsyst} that there is an $\alpha$-invariant unital subalgebra $\aA_0 \subseteq \aA$ such that the restricted system is equivalent to the trivial system $\bigl( C(\SU_2), G, r \bigr)$ and the algebra $\aA$ is generated by $\aA_0 = C(\SU_2)$ and $\aB = C(X)$. However, these two algebras satisfy non-trivial commutation relations.  Namely, we may collect all odd and even isotypic components and split $\aA$ into its odd and even part, that is, we call $x \in \aA$ even if $\alpha_{-1}(x) = x$ and odd if $\alpha_{-1}(x) = (-x)$. Recall that for each $(\pi,V) \in \hat G$ the isotypic component $A(\pi)$, as correspondence over~$\aB$, is isomorphic to $\aB \tensor \End(V)$ with the usual right right multiplication and the left multiplication $b \,.\, x = \gamma_\pi(b) x$ for $x \in A(\pi)$ and $b \in \aB$. Since $\gamma_\pi$ maps into the center of $\aB \tensor \End(V)$, we obtain for $\pi$ odd that $b\,.\,x = ((b \circ h)\tensor\one) x = x \,.\, (b \circ h)$ and for $\pi$ even likewise $b\,.\,x = x\,.\,b$. Hence, for every $x \in \aA_0 = C(\SU_2)$ and $b \in \aB = C(X)$ we have
	\begin{align*}
		bx &= xb
		&
		&\text{if $x$ is even,}
		\\
		bx &= x(b \circ h) 
		&
		&\text{if $x$ is odd.}
	\end{align*}
	Theses relations determine the C\Star algebra $\aA$ and the action of $G$ on $\aA$ uniquely, that is, $\aA$ is the unique C\Star algebra generated by $C(\SU_2)$ and $C(X)$ subject to the above relations. The algebra $\aA$ is a particular case of the a twisted tensor product (see \cite{Kasp81,MeSuWo14}).
\end{expl}
\begin{rmk}
	It is a well-known fact from homotopy theory that there exists, up to isomorphy, exactly one principal $\SU_2$-bundle over the 3-sphere $\mathbb{S}^3$, namely the trivial principal $\SU_2$-bundle $\mathbb{S}^3\times \SU_2$. It is used as a toy model for Chern-Simons theory developed by Witten \cite{Wi89} to derive a 3-dimensional quantum field theory in order to give an intrinsic definition of the Jones polynomial and its generalizations dealing with knots in three dimensional space. Example~\ref{expl:SU_2} shows that the noncommutative setting provides more possibilities of constructing ``noncommutative principal $\SU_2$-bundles'' over~$\mathbb{S}^3$.
\end{rmk}

\pagebreak[3]
\section{Construction of Cleft Systems}
\label{sec:construction}

In the previous section we have seen that factor systems provide an invariant of weakly cleft C\Star dynamical systems. In the following we will see that they actually provide a full classification. That is, we will show that for every factor system $(\gamma, \omega)$ for $(\aB,G)$ there actually is a weakly cleft C\Star dynamical system $(\aA, G, \alpha)$ with factor system $(\gamma, \omega)$. We would like to mention that the construction principle is not limited to weakly cleft actions and can be carried out more abstractly for general actions of compact quantum groups on C\Star algebras (see \cite{Ne13}). However, our restricted setting allows some simplifications.

We will split the construction into three steps. In the first step we will show how a factor system gives rise to a multiplication and hence an algebra. The second step will concern the construction of an involution by exploiting the Hilbert module structure. In the last step we proceed along the lines sketched in Section~\ref{sec:decomp} to finally construct the C\Star algebra and the dynamical system.

\subsection{Associativity and Factor Systems}
\label{sec:associativity}

Once and for all let us fix a compact group $G$ and a unital C\Star algebra $\aB$, and let $(\gamma, \omega)$ be a pair consisting of a family $\gamma = (\gamma_\pi)_{\pi \in \hat G}$ of unital \Star homomorphisms $\gamma_\pi:\aB \to \aB \tensor \End(V_\pi)$ and a family of isometries $\omega(\pi,\rho) \in \aB \tensor \End( V_\pi \tensor V_\rho)$ for all $\pi,\rho \in \hat G$. For the moment we do not assume that $(\gamma, \omega)$ is a factor system. We extend both families naturally to arbitrary \mbox{representations} of $G$, that is, we decompose given representations $(\pi, V)$, $(\rho, W)$ into direct sums of irreducible components and define the corresponding maps $\gamma_\pi$ and elements $\omega(\pi, \rho)$ componentwise. Equivalently, $\gamma_\pi:\aB \to \aB \tensor \End(V)$ and $\omega(\pi,\rho) \in \aB \tensor \End(V \tensor W)$ are the unique \Star homomorphisms and unitary elements, respectively, such that
\begin{align}
	\label{eq:intertwining_rel}
	\gamma_\pi(b) v &= v \gamma_\sigma(b)
	&
	&\text{and}
	&
	\omega(\pi,\rho) (v \tensor w) &= (v \tensor w) \omega(\sigma, \sigma')
\end{align}
for all isometric intertwiners $v:V_\sigma \to V$ and $w:V_{\sigma'} \to W$ with irreducible representations $\sigma, \sigma' \in \hat G$ and all $b \in \aB$.

For each representation $(\pi,V)$ of $G$ we consider the correspondence $\aB \tensor \End(V)$ over $\aB$ given by the usual right multiplication, the $\aB$-valued inner product $\scal{x,y}_\aB := \tfrac{1}{d_\pi} (\id \tensor \Tr)(x^* y)$, and the left multiplication 
\begin{equation*}
	b \,.\, x := \gamma_\pi(b)x 
\end{equation*}
for all $b \in \aB$ and $x,y \in \aB \tensor \End(V)$. We write $\norm{x}_2 := \scal{x,x}_\aB^{1/2}$ for the corresponding norm of $x \in \aB \tensor \End(V)$ and for an irreducible representation $\sigma \in \hat G$, we write $P_{\sigma, \pi}$ for the map $P_{\sigma, \pi}:~\aB \tensor \End(V) \to \aB \tensor \End(V_\sigma)$ given by $P_{\sigma, \pi}(x) := \sum_{k=1}^m v_k^* x v_k^{}$, where $v_1, \dots, v_m:V_\sigma \to V$ is an orthonormal basis of intertwiners. The map does not depend on the choice of the orthonormal basis. Moreover, the map $P_{\sigma, \pi}$ is adjointable with adjoint given by \mbox{$P_{\sigma, \pi}^\star(z) = \frac{d_\pi}{d_\sigma} \sum_{k=1}^m v_k^{} y v_k^*$} for all $z \in \aB \tensor \End(V_\sigma)$.

For each pair of representations $(\pi,V)$ and $(\rho, W)$ of $G$ we define a linear map by
\begin{gather*}
	m_{\pi, \rho}: [\aB \tensor \End(V)] \tensor_\aB [\aB \tensor \End(W)] \longrightarrow \aB \tensor \End(V \tensor W)
	\\
	m_{\pi,\rho}(x \tensor y) := \omega(\pi,\rho) \cdot (\id_\pi \tensor \gamma_\rho)(x) \cdot (\one_\pi \tensor y).
\end{gather*}
We refer to $m_{\pi, \rho}$ as \emph{multiplication} map. 
For sake of a concise notation we will frequently drop the subindex and simply write $m$ if the respective domains are not substantial and clearly determined by the context. Moreover, we amplify in the canonical way. With this simplifications the map $m_{\pi, \rho}$ takes the form
\begin{equation*}
	m_{\pi, \rho}(x \tensor y) = \omega(\pi, \rho) \,  \gamma_\rho(x)  y
\end{equation*}
for all $x \in \aB \tensor \End(V)$ and $y \in \aB \tensor \End(W)$. 

\pagebreak[4]
\begin{lemma}
	The map $m_{\pi, \rho}$ is an adjointable isometry. 
	%If $u_{\pi, \rho}$ is unitary then $m_{\pi, \rho}$ is unitary.
	\label{lem:m_isometry}
\end{lemma}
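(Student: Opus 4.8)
The plan is to show directly that $m_{\pi,\rho}$ preserves the $\aB$-valued inner product, which by definition of the module norms forces it to be isometric, and then to exhibit the adjoint explicitly. Recall that $m_{\pi,\rho}(x \tensor y) = \omega(\pi,\rho) \, (\id_\pi \tensor \gamma_\rho)(x) \, (\one_\pi \tensor y)$, where $\omega(\pi,\rho)$ is an isometry and $(\id_\pi \tensor \gamma_\rho)$ is a unital \Star homomorphism.

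First I would compute $\scal{m_{\pi,\rho}(x_1 \tensor y_1), \, m_{\pi,\rho}(x_2 \tensor y_2)}_\aB$ for elementary tensors. Using $\omega(\pi,\rho)^* \omega(\pi,\rho) = \one$, the inner product unwinds to $\tfrac{1}{d_\pi d_\rho}(\id \tensor \Tr)\bigl( (\one_\pi \tensor y_1)^* \, (\id_\pi \tensor \gamma_\rho)(x_1^* x_2) \, (\one_\pi \tensor y_2) \bigr)$. Now I would use the trace's partial-trace behaviour: since $(\id_\pi \tensor \gamma_\rho)$ sends $\aB \tensor \End(V_\pi)$ into $\aB \tensor \End(V_\pi) \tensor \End(V_\rho)$ with the $\End(V_\rho)$-leg coming only from $\gamma_\rho$, a short computation with $(\id \tensor \Tr_\rho)$ on the $V_\rho$-factor shows the expression collapses to $\tfrac{1}{d_\pi}(\id \tensor \Tr_\pi)\bigl( x_1^* \, \bigl[ b \,.\, \text{(something)} \bigr] \bigr)$; more precisely it should reduce to $\scal{y_1, \, \gamma_\rho'\text{-twisted inner product of }x_1,x_2 \,.\, y_2}$, matching exactly the defining formula for the inner product on the balanced tensor product $[\aB \tensor \End(V)] \tensor_\aB [\aB \tensor \End(W)]$ recalled in the Preliminaries, namely $\scal{x_1 \tensor y_1, x_2 \tensor y_2}_\aB = \scal{y_1, \scal{x_1,x_2}_\aB \,.\, y_2}_\aB$, where the left action on the right-hand module is via $\gamma_\rho$. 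This is the crux and I expect it to be the main obstacle: one must be careful that the left $\aB$-action on $\aB \tensor \End(W)$ used to balance the tensor product is precisely $b \,.\, y = \gamma_\rho(b) y$, so that the identity $\gamma_\rho(\scal{x_1,x_2}_\aB) = (\id_\pi \tensor \Tr_\pi)$-type partial trace of $(\id_\pi \tensor \gamma_\rho)(x_1^* x_2)$ is exactly what makes the two inner products agree. The trace normalization factors $\tfrac{1}{d_\pi}$, $\tfrac{1}{d_\rho}$, $\tfrac{1}{d_\pi d_\rho}$ must be tracked carefully, and one uses that $\gamma_\rho$ being unital gives $(\id \tensor \tfrac{1}{d_\rho}\Tr_\rho)(\gamma_\rho(b)) = b$.

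Having established that $m_{\pi,\rho}$ is inner-product preserving, isometry for the module norms is immediate. For adjointability I would write down the candidate adjoint. Since $\omega(\pi,\rho)$ has an adjoint $\omega(\pi,\rho)^*$ and $(\id_\pi \tensor \gamma_\rho)$ together with $P_{\sigma,\pi\tensor\rho}$-type maps are all adjointable (the adjoint of $P_{\sigma,\pi}$ was recorded just before the lemma), the composite $z \mapsto$ "apply $\omega(\pi,\rho)^*$, then decompose via $(\id_\pi \tensor \gamma_\rho)$ and the conditional-expectation-type projection onto the image" is adjointable; concretely the adjoint should be $m_{\pi,\rho}^\star(z) = \sum_k (\text{slot }1\text{ components of }(\id_\pi\tensor\gamma_\rho)^\star) \tensor \ldots$ applied to $\omega(\pi,\rho)^* z$. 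Rather than compute this explicitly, I would argue structurally: a norm-decreasing (here isometric) map between Hilbert $\aB$-modules need not be adjointable in general, so one genuinely needs the formula; but $m_{\pi,\rho}$ factors as $x \tensor y \mapsto (\id_\pi \tensor \gamma_\rho)(x)(\one_\pi \tensor y) \mapsto \omega(\pi,\rho)\cdot(-)$, and the first map is (the linearization of) the multiplication $[\aB\tensor\End(V)]\tensor_\aB[\aB\tensor\End(W)] \to \aB\tensor\End(V\tensor W)$, which is adjointable because on each isotypic block it is built from the adjointable maps $P_{\sigma,\pi\tensor\rho}^\star$, while left multiplication by the isometry $\omega(\pi,\rho)$ is trivially adjointable with adjoint left multiplication by $\omega(\pi,\rho)^*$. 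Composing adjointable maps gives adjointability of $m_{\pi,\rho}$, completing the proof.
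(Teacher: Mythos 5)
Your route is essentially the paper's: factor $m_{\pi,\rho}$ as the canonical map $x \tensor y \mapsto (\id_\pi \tensor \gamma_\rho)(x)(\one_\pi \tensor y)$ followed by left multiplication by the isometry $\omega(\pi,\rho)$, and the inner-product computation you outline (using $\omega(\pi,\rho)^*\omega(\pi,\rho)=\one$ and the fact that the partial trace over $V_\pi$ commutes with $\id_\pi \tensor \gamma_\rho$) is exactly what shows the first map is isometric; the paper compresses this into the statement that $[\aB\tensor\End(V)]\tensor_\aB[\aB\tensor\End(W)]$ is canonically isometrically isomorphic to $\aB\tensor\End(V\tensor W)$.

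The one step whose justification does not hold up is adjointability of that first map. The maps $P_{\sigma,\pi\tensor\rho}^\star$ play no role here --- they enter only later, when the product $\bullet$ on $A$ is assembled from the $m_{\pi,\rho}$ --- so ``built from the adjointable maps $P_{\sigma,\pi\tensor\rho}^\star$'' is not an argument. Since you yourself correctly observe that an isometry of Hilbert modules need not be adjointable, you need the additional observation that the canonical map is \emph{surjective} onto $\aB\tensor\End(V\tensor W)$: taking $x=\one_\aB\tensor T$ gives $(\id_\pi\tensor\gamma_\rho)(x)=\one_\aB\tensor T\tensor\one_\rho$ by unitality of $\gamma_\rho$, and letting $y$ range over $\aB\tensor\End(W)$ produces a spanning set of $\aB\tensor\End(V\tensor W)$. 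A surjective isometry of Hilbert modules is a unitary, hence adjointable with adjoint its inverse; composing with left multiplication by the isometry $\omega(\pi,\rho)$ (adjoint: left multiplication by $\omega(\pi,\rho)^*$) then finishes the proof. With that one line added your argument is complete and coincides with the paper's.
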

\begin{proof}
	Recall that for any C\Star algebra $\aA$, equipped with the natural right inner product $\scal{x,y}_\aA := x^*y$ ($x,y \in \aA$), and for any isometry $s \in \aA$ the map $\aA \to \aA$, $a \mapsto sa$ is an adjointable isometry.  
	In particular, it follows that the map $z \mapsto \omega(\pi,\rho) \,z$ on $\aB \tensor \End(V \tensor W)$ is an adjointable isometry. Therefore, the assertion follows from the fact that the tensor product $[\aB \tensor \End(V)] \tensor_\aB [\aB \tensor \End(W)]$ is canonically isometrically isomorphic to $\aB \tensor \End(V \tensor W)$ via the isomorphism $x \tensor y \mapsto \gamma_\rho(x)y$ for all \mbox{$x \in \aB \tensor \End(V)$} and $y \in \aB \tensor \End(W)$. 
\end{proof}

\pagebreak[3]
\begin{lemma}
	\label{lem:assoc+}
	Suppose $(\gamma,\omega)$ is a factor system. Then, for all representations $\pi, \rho$ of~$G$ the map $m_{\pi, \rho}$ is a $\aB$-bimodule map.
	Moreover, we have 
	%on the correspondence $[\aB \tensor \End(V_\pi \tensor V_\rho)] \tensor_\aB [\aB \tensor \End(V_\sigma)]$ over $\aB$ we have
	\begin{equation}
		\label{eq:assoc+}
		(\id_\pi \tensor m_{\rho, \sigma}) \circ (m_{\pi, \rho}^\star \tensor \id_\sigma)
		=
		m_{\pi, \rho \tensor \sigma}^\star \circ m_{\pi \tensor \rho, \sigma}.
	\end{equation}
	for all representations $\pi, \rho, \sigma$ of $G$.
	In particular, the family of maps $m_{\pi, \rho}$ is associative, \ie, 
	\begin{equation*}
		m_{\pi, \rho \tensor \sigma} \circ (\id_\pi \tensor m_{\rho, \sigma}) 
		=
		m_{\pi \tensor \rho, \sigma} \circ (m_{\pi, \rho} \tensor \id_\sigma).
	\end{equation*}
\end{lemma}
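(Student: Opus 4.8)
The plan is to prove the three displayed assertions in the order they appear, deriving each from the previous one together with the factor‑system axioms. First, to see that $m_{\pi,\rho}$ is a $\aB$‑bimodule map: right $\aB$‑linearity is already built into the definition (the factor $(\one_\pi \tensor y)$ sits on the right and everything is right multiplication), and it respects the balanced tensor product over $\aB$ by the identification used in Lemma~\ref{lem:m_isometry}. For left $\aB$‑linearity I would compute $m_{\pi,\rho}(b\,.\,x \tensor y) = \omega(\pi,\rho)\,\gamma_\rho(\gamma_\pi(b)x)\,y = \omega(\pi,\rho)\,(\id_\pi\tensor\gamma_\rho)(\gamma_\pi(b))\,\gamma_\rho(x)\,y$, and then invoke the coaction condition~\eqref{eq:gamma} to rewrite $\omega(\pi,\rho)\,(\id_\pi\tensor\gamma_\rho)(\gamma_\pi(b)) = \gamma_{\pi\tensor\rho}(b)\,\omega(\pi,\rho)$; this gives $\gamma_{\pi\tensor\rho}(b)\,m_{\pi,\rho}(x\tensor y) = b\,.\,m_{\pi,\rho}(x\tensor y)$, as required. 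One has to note that~\eqref{eq:gamma}, stated for irreducible $\pi,\rho$, extends to arbitrary representations via the intertwiner relations~\eqref{eq:intertwining_rel}; this is a routine check I would state but not belabour.

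Next, the key identity~\eqref{eq:assoc+}. Since each $m$ is an adjointable isometry (Lemma~\ref{lem:m_isometry}) and a $\aB$‑bimodule map, both sides of~\eqref{eq:assoc+} are adjointable $\aB$‑bimodule maps, and I would verify the equality by testing on elementary tensors $x\tensor y\tensor z$ with $x\in\aB\tensor\End(V)$, etc. The left side, $(\id_\pi\tensor m_{\rho,\sigma})\circ(m_{\pi,\rho}^\star\tensor\id_\sigma)$, is most easily handled by instead checking the adjoint identity, or equivalently by pairing both sides against an arbitrary elementary tensor in the target and using $\scal{m^\star a, b} = \scal{a, mb}$. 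Concretely I would show that $\scal{\,(\id_\pi\tensor m_{\rho,\sigma})(m_{\pi,\rho}^\star\tensor\id_\sigma)(\xi),\ \eta\,}_\aB = \scal{\,m_{\pi\tensor\rho,\sigma}(\xi),\ m_{\pi,\rho\tensor\sigma}(\eta)\,}_\aB$ for all $\xi,\eta$, reducing everything to the single operator identity
\[
	\bigl(\one_\pi \tensor \omega(\rho,\sigma)\bigr)\cdot(\id_{\pi\tensor\rho}\tensor\gamma_\sigma)\bigl(\omega(\pi,\rho)\bigr)^*
	= \omega(\pi,\rho\tensor\sigma)^* \cdot \omega(\pi\tensor\rho,\sigma),
\]
which is precisely the cocycle condition~\eqref{eq:u}. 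The bookkeeping here—keeping track of which leg $\gamma_\sigma$, $\gamma_\rho$, and the various $\omega$'s act on, and correctly inserting the adjoints $m^\star$—is where the real work lies; I expect this leg‑numbering bookkeeping to be the main obstacle, and I would carry it out by first writing $m^\star_{\pi,\rho}$ explicitly via the inverse of the isomorphism $x\tensor y\mapsto \gamma_\rho(x)y$ composed with $z\mapsto\omega(\pi,\rho)^* z$.

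Finally, the associativity of the family $m_{\pi,\rho}$ is a formal consequence of~\eqref{eq:assoc+}: compose~\eqref{eq:assoc+} on the left with $m_{\pi,\rho\tensor\sigma}$ and on the right with $(m_{\pi,\rho}\tensor\id_\sigma)$, and use that each $m$ is an isometry so that $m\circ m^\star$ acts as the identity on the range of $m$—more precisely, $m_{\pi,\rho\tensor\sigma}\circ m_{\pi,\rho\tensor\sigma}^\star$ applied to $m_{\pi\tensor\rho,\sigma}(\cdots)$ returns the argument because the relevant elements lie in the appropriate range, by non‑degeneracy. This yields
\[
	m_{\pi,\rho\tensor\sigma}\circ(\id_\pi\tensor m_{\rho,\sigma})\circ(m_{\pi,\rho}^\star\circ m_{\pi,\rho}\tensor\id_\sigma)
	= m_{\pi\tensor\rho,\sigma}\circ(m_{\pi,\rho}\tensor\id_\sigma),
\]
and since $m_{\pi,\rho}^\star\circ m_{\pi,\rho} = \id$ (isometry), the left‑hand side collapses to $m_{\pi,\rho\tensor\sigma}\circ(\id_\pi\tensor m_{\rho,\sigma})\circ(\id\tensor\id)$ precomposed with $(m_{\pi,\rho}\tensor\id_\sigma)$—giving the stated associativity. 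I would spell out this last deduction carefully since it is short but relies on matching domains correctly.
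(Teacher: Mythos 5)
Your route is essentially the paper's: the bimodule property comes from the coaction condition \eqref{eq:gamma}, identity \eqref{eq:assoc+} reduces---under the canonical identification of the threefold tensor product with $\aB \tensor \End(V_\pi \tensor V_\rho \tensor V_\sigma)$, where $m_{\pi\tensor\rho,\sigma}$, $m_{\pi,\rho\tensor\sigma}^\star$, $(\id_\pi\tensor m_{\rho,\sigma})$ and $(m_{\pi,\rho}^\star\tensor\id_\sigma)$ all become left multiplications by explicit operators---to the cocycle condition \eqref{eq:u}, and associativity is then extracted by composing with $m_{\pi,\rho\tensor\sigma}$ on the left and $m_{\pi,\rho}\tensor\id_\sigma$ on the right. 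Your pairing formulation of \eqref{eq:assoc+} is an equivalent repackaging of the same computation, and your remark that \eqref{eq:gamma} and \eqref{eq:u} must first be extended from irreducible to arbitrary representations via \eqref{eq:intertwining_rel} is exactly the point the paper handles at the start of Section~\ref{sec:associativity}.

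The one genuine gap is in the last step. After composing you arrive at
\begin{equation*}
	m_{\pi,\rho\tensor\sigma}\circ(\id_\pi\tensor m_{\rho,\sigma})
	= \bigl(m_{\pi,\rho\tensor\sigma}\circ m_{\pi,\rho\tensor\sigma}^\star\bigr)\circ m_{\pi\tensor\rho,\sigma}\circ(m_{\pi,\rho}\tensor\id_\sigma),
\end{equation*}
and you discard the projection $P := m_{\pi,\rho\tensor\sigma}\circ m_{\pi,\rho\tensor\sigma}^\star$ on the grounds that the elements $m_{\pi\tensor\rho,\sigma}\bigl((m_{\pi,\rho}\tensor\id_\sigma)(\xi)\bigr)$ already lie in the range of $m_{\pi,\rho\tensor\sigma}$ ``by non-degeneracy''. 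There is no non-degeneracy hypothesis available here: a factor system only requires the $\omega(\pi,\rho)$ to be isometries, so $m_{\pi,\rho\tensor\sigma}$ is in general not surjective, its range is a proper complemented submodule, and the asserted range containment is precisely what has to be proved---it is not given a priori. (If the $\omega$'s were unitary, i.e.\ in the cleft case, your shortcut would be fine.) The containment does follow, but only from the displayed identity itself: writing $T$ for its left-hand side and $S := m_{\pi\tensor\rho,\sigma}\circ(m_{\pi,\rho}\tensor\id_\sigma)$, both $T$ and $S$ are adjointable isometries by Lemma~\ref{lem:m_isometry}, so $S^\star(\one - P)S = S^\star S - T^\star T = 0$, and positivity of $\one - P$ forces $(\one-P)S = 0$, i.e.\ $PS = S$. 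This is the argument the paper uses; with it in place your proof closes.
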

\begin{proof}
	The right module property of $m_{\pi, \rho}$ is obviously satisfied even without assuming that $(\gamma,\omega)$ is a factor system.  Furthermore, by the coaction condition of the factor system, Equation~\eqref{eq:gamma}, we obtain 
	\begin{align*}
		m \bigl( b \,.\, (x \tensor y) \bigr) 
		&= m \bigl( \gamma_\pi(b)x \tensor y \bigr)
		= \omega(\pi,\rho)  \, \gamma_\rho \bigl( \gamma_\pi(b) x \bigr) \, y 
		\\
		&\overset{\eqref{eq:gamma}}= 
		\gamma_{\pi \tensor \rho}(b) \, \omega(\pi, \rho) \,  \gamma_\rho(x) y
		= b \,.\, m(x \tensor y)
	\end{align*}
	for all $b \in \aB$, $x \in \aB \tensor \End(V)$, and $y \in \aB \tensor \End(W)$. This shows that $m_{\pi, \rho}$ is indeed a $\aB$\ndash bimodule map. In particular, the left and right hand side of \eqref{eq:assoc+} are well-defined maps on the correspondence $[\aB \tensor \End(V_\pi \tensor V_\rho)] \tensor_\aB [\aB \tensor \End(V_\sigma)]$. 
	In order to verify \eqref{eq:assoc+} for representations $\pi, \rho, \sigma$ of $G$, we canonically identify the tensor product of the three correspondences $\aB \tensor \End(V_\pi)$, $\aB \tensor \End(V_\rho)$, and $\aB \tensor \End(V_\sigma)$ over $\aB$ with the correspondence $\aB \tensor \End(V_\pi \tensor V_\rho \tensor V_\sigma)$ via the isometric isomorphism $x \tensor y \tensor z \mapsto \gamma_\sigma \bigl( \gamma_\rho(x) y \bigr) z$. Then  the cocycle condition, Equation~\eqref{eq:u}, implies  that
	\begin{align*}
		\MoveEqLeft
		(\id_\pi \tensor m_{\rho, \sigma})\bigl( (m_{\pi, \rho}^\star \tensor \id_\sigma)(x) \bigr) 
		= 
		\omega(\rho,\sigma) \, \gamma_\sigma \bigl( \omega(\pi, \rho) \bigr)^* \, x,
		\\
		&\overset{\eqref{eq:u}}=
		\omega(\pi, \rho \tensor \sigma)^* \, \omega(\pi \tensor \rho, \sigma) \, x
		=
		m_{\pi, \rho \tensor \sigma}^\star \bigl( m_{\pi \tensor \rho, \sigma}(x) \bigr)
	\end{align*}
	for all $x \in \aB \tensor \End(V_\pi \tensor V_\rho \tensor V_\sigma)$, which proves Equation~\eqref{eq:assoc+}. Multiplying \eqref{eq:assoc+} with $(m_{\pi, \rho} \tensor \id_\sigma)$ from the right and $m_{\pi, \rho \tensor \sigma}$ from the left yields 
	\begin{equation*}
		m_{\pi, \rho \tensor \sigma} (\id_\pi \tensor m_{\rho, \sigma}) 
		=
		m_{\pi, \rho \tensor \sigma} \circ m_{\pi, \rho \tensor \sigma}^\star \circ m_{\pi \tensor \rho, \sigma} \circ (m_{\pi, \rho} \tensor \id_\sigma).
	\end{equation*}
	Since the left hand side of this equation and the term $S := m_{\pi \tensor \rho, \sigma} \circ (m_{\pi, \rho} \tensor \id_\sigma)$ on the right hand side are both isometries, the orthogonal projection $m_{\pi, \rho \tensor \sigma} \circ m_{\pi, \rho \tensor \sigma}^\star$ acts trivially on the rage of $S$. Therefore, we may cancel $m_{\pi, \rho \tensor \sigma} \circ m_{\pi, \rho \tensor \sigma}^\star$, which shows associativity.
\end{proof}

\begin{rmk}
	The normalization condition $\omega(1,1) = \one_\aB$ of Definition~\ref{defn:factor_sys} is equivalent to the fact that for the trivial representation the multiplication map~$m$ recovers the $\aB$-bimodule structure, that is, for a representation $(\pi,V)$ of $G$ we have
	\begin{align*}
		m_{1,\pi}(b \tensor x) &= b \,.\, x = \gamma_\pi(b)x,
		&
		m_{\pi, 1}(x \tensor b) &= x \,.\, b = x(b \tensor \one_\pi)
	\end{align*}
	for all elements $x \in \aB \tensor \End(V)$ and $b \in \aB$. In particular, in this case $m_{1,1}$ coincides with the usual multiplication of~$\aB$.
	\label{rmk:subalg_B}
\end{rmk}

In order to define an algebra, we consider the algebraic direct sum of the correspondences $\aB \tensor \End(V_\pi)$ for $\pi \in \hat G$:
\begin{equation*}
	A := \bigoplus_{\pi \in \hat G} \aB \tensor \End(V_\pi).
\end{equation*}
The left and right action of $\aB$ are given componentwise and the $\aB$-valued inner product is $\scal{x,y}_\aB = \sum_{\sigma \in \hat G} \scal{x_\sigma, y_\sigma}_\aB$, where $x_\sigma$ and $y_\sigma$ denote the components of $x$ and $y$, respectively. 
On $A$ we define the product of $x \in \aB \tensor \End(V_\pi)$ and $y \in \aB \tensor \End(V_\rho)$ with $\pi, \rho \in \hat G$ by 
\begin{equation*}
	x \bullet y := \sum_{\sigma \in \hat G} P_{\sigma, \pi \tensor \rho} \bigl( m(x \tensor y) \bigr).
\end{equation*}
Bilinear extension then yields a bilinear map $(x,y) \mapsto x \bullet y$ on~$A$. For $x \in \aB$ or $y \in \aB$ this product coincides with the left or right action of $\aB$, respectively.

Due to the intertwining relations \eqref{eq:intertwining_rel} the family of maps $m_{\pi, \rho}$ behaves nicely with respect to intertwiners. It is straightforward to check that for all representations $\pi, \rho$ of $G$ and every intertwiner $v:V_\sigma \to V_\pi$ we have
\begin{equation}
	\label{eq:intertw_rel_m}
	\begin{aligned}
	m(v x v^* \tensor y) 
	&= (v \tensor \one_\rho) \, m(x \tensor y) \, (v \tensor \one_\rho)^*,
	\\
	m(v^* z v \tensor y) 
	&= (v \tensor \one_\rho)^* m(z \tensor y) (v \tensor \one_\rho)
	\end{aligned}
\end{equation}
for all $x \in \aB \tensor \End(V_\pi)$ and $y \in \aB \tensor \End(V_\rho)$, and similar equations hold for adjoining intertwiners in the right tensor factor. Using this relations, the associativity of Lemma~\ref{lem:assoc+} and some algebra straightforwardly yields the following:

\begin{lemma}
	Suppose $(\gamma, \omega)$ is a factor system. Then the product $\bullet$ on $A$ is associative, that is, $(A, \bullet)$ is an algebra.
\end{lemma}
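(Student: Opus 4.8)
The plan is to reduce associativity of $\bullet$ to the associativity of the multiplication maps $m_{\pi,\rho}$ established in Lemma~\ref{lem:assoc+}, the price being that one must track how the isotypic projections $P_{\sigma,\pi}$ interact with $m$. By bilinearity it suffices to verify $(x\bullet y)\bullet z=x\bullet(y\bullet z)$ for homogeneous elements $x\in\aB\tensor\End(V_\pi)$, $y\in\aB\tensor\End(V_\rho)$, $z\in\aB\tensor\End(V_\tau)$ with $\pi,\rho,\tau\in\hat G$; all sums occurring below are finite, since $V_\pi\tensor V_\rho$ has only finitely many irreducible constituents.

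The crux is a purely representation-theoretic \emph{transitivity} property of the maps $P_{\sigma,\pi}$. Fixing, for each $\sigma\in\hat G$, an orthonormal basis of intertwiners $v^{(\sigma)}_k\colon V_\sigma\to V_\pi\tensor V_\rho$, and for each $\sigma,\mu$ an orthonormal basis of intertwiners $w^{(\sigma,\mu)}_j\colon V_\mu\to V_\sigma\tensor V_\tau$, the composites $(v^{(\sigma)}_k\tensor\one_\tau)\circ w^{(\sigma,\mu)}_j$ (over all $\sigma,k,j$) form an orthonormal basis of intertwiners $V_\mu\to V_\pi\tensor V_\rho\tensor V_\tau$: orthonormality is Schur's lemma together with $v^{(\sigma)*}_kv^{(\sigma')}_{k'}=\delta_{\sigma\sigma'}\delta_{kk'}\one$, and completeness is the dimension count $\dim\Hom_G(V_\mu,V_\pi\tensor V_\rho\tensor V_\tau)=\sum_\sigma\dim\Hom_G(V_\sigma,V_\pi\tensor V_\rho)\cdot\dim\Hom_G(V_\mu,V_\sigma\tensor V_\tau)$. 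Since $P_{\mu,\cdot}$ is computed by summing conjugations over any orthonormal basis of intertwiners, this yields, for every $w\in\aB\tensor\End(V_\pi\tensor V_\rho\tensor V_\tau)$ and $\mu\in\hat G$,
\begin{equation*}
	\sum_{\sigma\in\hat G}P_{\mu,\sigma\tensor\tau}\Bigl(\sum_k(v^{(\sigma)}_k\tensor\one_\tau)^*\,w\,(v^{(\sigma)}_k\tensor\one_\tau)\Bigr)=P_{\mu,\pi\tensor\rho\tensor\tau}(w),
\end{equation*}
together with the analogous identity in which the intertwiners are adjoined in the right tensor leg.

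With this in hand I would unwind the two triple products. Writing $(x\bullet y)_\sigma=P_{\sigma,\pi\tensor\rho}(m(x\tensor y))$ for the $\sigma$-component of $x\bullet y$ and applying the intertwiner relation \eqref{eq:intertw_rel_m} with $v=v^{(\sigma)}_k$ to move $P_{\sigma,\pi\tensor\rho}$ past $m$, one gets $m\bigl((x\bullet y)_\sigma\tensor z\bigr)=\sum_k(v^{(\sigma)}_k\tensor\one_\tau)^*\,m\bigl(m(x\tensor y)\tensor z\bigr)\,(v^{(\sigma)}_k\tensor\one_\tau)$; applying $P_{\mu,\sigma\tensor\tau}$, summing over $\sigma$ and $\mu$, and invoking the transitivity identity gives $(x\bullet y)\bullet z=\sum_\mu P_{\mu,\pi\tensor\rho\tensor\tau}\bigl(m(m(x\tensor y)\tensor z)\bigr)$. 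The mirror computation, using the right-leg versions of \eqref{eq:intertw_rel_m} and of the transitivity identity, yields $x\bullet(y\bullet z)=\sum_\mu P_{\mu,\pi\tensor\rho\tensor\tau}\bigl(m(x\tensor m(y\tensor z))\bigr)$. The two right-hand sides coincide by the associativity of the maps $m$ from Lemma~\ref{lem:assoc+} — the one place the factor-system hypothesis is used — and since $A$ is visibly closed under $\bullet$ and $\bullet$ is bilinear, this finishes the proof that $(A,\bullet)$ is an algebra.

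The only delicate point I anticipate is the bookkeeping around the canonical identifications and the balanced tensor products $\tensor_\aB$ — in particular making sure the intertwiner relations \eqref{eq:intertw_rel_m} are applied in the correct tensor leg and that the amplifications of the $P_{\sigma,\pi}$ line up — rather than any genuine conceptual difficulty; the transitivity property is routine representation theory, and Lemma~\ref{lem:assoc+} carries the real weight.
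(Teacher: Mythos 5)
Your argument is correct and follows exactly the route the paper intends: the paper itself only remarks that the lemma follows from the intertwining relations \eqref{eq:intertw_rel_m}, the associativity in Lemma~\ref{lem:assoc+}, and ``some algebra'', and your transitivity identity for the maps $P_{\sigma,\pi}$ (built from composing orthonormal bases of intertwiners) is precisely the algebra being left to the reader. Nothing is missing.
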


\subsection{Constructing the Involution}

Throughout the remainder of the section let us assume that $(\gamma,\omega)$ is a factor system.  In order to define an involution on the algebra $A$, let consider a fixed $\pi \in \hat G$. We put $p_\pi := P_{1,\pi \tensor \bar\pi}^*(\one_\aB)$, that is, $p_\pi$ is the $d_\pi^2$-th multiple of the orthogonal projection onto the fixed point space of the representation $\pi \tensor \bar\pi$. For every \mbox{$x \in \aB \tensor \End(V_\pi)$} we define an element in $\aB \tensor \End(\bar V_\pi)$ by
\begin{equation*}
	J(x) := (L_x^\star \circ m_{\pi, \bar\pi}^\star)(p_\pi), %\qquad a^+, a^*, a^\dagger, a^\star, a^\bigstar, a^\bullet
\end{equation*}
where we denote by $L_x:\aB \tensor \End(\bar V_\pi) \to [\aB \tensor \End(V_\pi)] \tensor_\aB [\aB \tensor \End(\bar V_\pi)]$ the adjointable map given by $L_xy := x \tensor y$. Extending this antilinearly to all summands provides an antilinear map \mbox{$J:A \to A$}. On the subalgebra $\aB \subseteq A$ (\ie, for $\pi =1$) we immediately find $J(b) = b^*$ for all $b \in \aB$. For the other summands, however, $J$~does not coincide with the usual involution on $\aB \tensor \End(V_\pi)$ in general.

\begin{thm}
	\label{thm:inner_product}
	For all $x,y,z \in A$ we have
	\begin{equation*}
		\scal{J(x) \bullet y, \, z}_\aB = \scal{y, \, x \bullet z}_\aB.
	\end{equation*}
\end{thm}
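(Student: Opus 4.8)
The plan is to reduce the identity $\scal{J(x) \bullet y, z}_\aB = \scal{y, x \bullet z}_\aB$ to the associativity relation~\eqref{eq:assoc+} from Lemma~\ref{lem:assoc+}, by unwinding all the definitions in terms of the multiplication maps $m_{\pi,\rho}$ and their adjoints. By bilinearity it suffices to treat homogeneous elements $x \in \aB \tensor \End(V_\pi)$, $y \in \aB \tensor \End(V_\rho)$, $z \in \aB \tensor \End(V_\sigma)$ with $\pi, \rho, \sigma \in \hat G$. The first step is to record that in $A$ the inner product $\scal{u,v}_\aB$ of two homogeneous elements vanishes unless they sit in the same summand; combined with the definition $x \bullet y = \sum_{\tau} P_{\tau, \pi \tensor \rho}(m(x \tensor y))$ and the fact that $P_{\tau,\pi\tensor\rho}$ is (a multiple of) an orthogonal projection onto the $\tau$\ndash isotypic part, the pairing $\scal{J(x) \bullet y, z}_\aB$ collapses to $\scal{P_{\sigma, \bar\pi \tensor \rho}(m(J(x) \tensor y)), z}_\aB$, and using adjointability of $P_{\sigma,\bar\pi\tensor\rho}$ this equals $\frac{1}{c}\scal{m_{\bar\pi,\rho}(J(x)\tensor y), P^\star_{\sigma,\bar\pi\tensor\rho}(z)}_\aB$ up to a normalizing constant that I will track carefully. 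Similarly the right-hand side becomes a pairing involving $m_{\pi,\sigma}(x \tensor z)$.

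The second, central step is to rewrite the $J(x)$ appearing on the left. By definition $J(x) = (L_x^\star \circ m_{\pi,\bar\pi}^\star)(p_\pi)$ with $p_\pi = P^\star_{1,\pi\tensor\bar\pi}(\one_\aB)$, so $J(x)$ is built out of $m_{\pi,\bar\pi}^\star$ and the adjoint of left multiplication by $x$. The idea is then to move the operators onto the other side of the inner product: pairing $m_{\bar\pi,\rho}(J(x) \tensor y)$ against something is, after taking adjoints of $L_x$ and of the various $P^\star$ and $m^\star$ maps, the same as pairing $p_\pi$ (equivalently $\one_\aB$ in the trivial summand) against a composite of the form $(\id \tensor m_{\bar\pi,\rho}) \circ (m_{\pi,\bar\pi}^\star \tensor \id_\rho)$ applied to $x \tensor y \tensor (\text{stuff from } z)$. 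At this point Equation~\eqref{eq:assoc+} with the triple $(\pi, \bar\pi, \rho)$, namely $(\id_\pi \tensor m_{\bar\pi,\rho}) \circ (m_{\pi,\bar\pi}^\star \tensor \id_\rho) = m_{\pi, \bar\pi\tensor\rho}^\star \circ m_{\pi\tensor\bar\pi, \rho}$, is exactly what converts the left-hand composite into a form matching the right-hand side. Feeding the identity $p_\pi = P^\star_{1,\pi\tensor\bar\pi}(\one)$ back in, and using that $P_{1,\pi\tensor\bar\pi}$ composed with $m_{\pi\tensor\bar\pi,\rho}$ and the embedding of $\rho$ reproduces exactly the reduced form of $\scal{y, x \bullet z}_\aB$, closes the argument. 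Throughout I will use the intertwiner relations~\eqref{eq:intertwining_rel} and~\eqref{eq:intertw_rel_m} to commute the projection maps $P_{\sigma,\tau}$ past the multiplication maps, which is what makes the isotypic bookkeeping consistent.

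The main obstacle I anticipate is purely bookkeeping: correctly matching the normalization constants $\tfrac{1}{d_\pi}$, the factors $\tfrac{d_\pi}{d_\sigma}$ hidden in $P^\star_{\sigma,\pi}$, and the $d_\pi^2$\ndash scaling built into $p_\pi$, so that both sides come out with literally the same coefficient rather than merely proportional. A clean way to manage this is to avoid choosing bases at all: work entirely with the abstract adjoint relations $\scal{P_{\sigma,\tau}u, v}_\aB = \scal{u, P^\star_{\sigma,\tau}v}_\aB$, $\scal{m u, v}_\aB = \scal{u, m^\star v}_\aB$, $\scal{L_x u, v}_\aB = \scal{u, L_x^\star v}_\aB$, and the single algebraic input~\eqref{eq:assoc+}, so that the constants never need to be written down explicitly — they are forced to agree because every map involved is the genuine Hilbert\ndash module adjoint of a map that already appears on the other side. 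In particular, $p_\pi$ should only ever be used through the defining adjunction $\scal{p_\pi, w}_\aB = \scal{\one_\aB, P_{1,\pi\tensor\bar\pi}(w)}_\aB$. Once the pairing on the left is massaged, via the two adjunctions and~\eqref{eq:assoc+}, into $\scal{\one_\aB, \text{(something)}}_\aB$ and the same is done independently for the right, the two ``somethings'' are visibly equal, and a short computation with~\eqref{eq:intertw_rel_m} to recombine the isotypic pieces finishes the proof.
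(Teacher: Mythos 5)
Your overall strategy is the same as the paper's: reduce to homogeneous $x\in\aB\tensor\End(V_\pi)$, $y\in\aB\tensor\End(V_\rho)$, $z\in\aB\tensor\End(V_\sigma)$, use the adjunctions for $P_{\sigma,\tau}$, $m$ and $L_x$ to peel $J(x)=L_x^\star m_{\pi,\bar\pi}^\star(p_\pi)$ apart, and apply Equation~\eqref{eq:assoc+} for the triple $(\pi,\bar\pi,\rho)$ to convert $(\id_\pi\tensor m_{\bar\pi,\rho})\circ(m_{\pi,\bar\pi}^\star\tensor\id_\rho)$ into $m_{\pi,\bar\pi\tensor\rho}^\star\circ m_{\pi\tensor\bar\pi,\rho}$. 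Up to the point where the left-hand side has been brought into the form
\begin{equation*}
	\scal*{(P_{1,\pi\tensor\bar\pi}^\star\tensor\id_\rho)(y),\;(\id_\pi\tensor P_{\sigma,\bar\pi\tensor\rho}^\star)\,m(x\tensor z)}_\aB,
\end{equation*}
your outline is sound and coincides with the paper's computation.

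The gap is in the final step. Since $\scal{y,x\bullet z}_\aB=\scal{y,P_{\rho,\pi\tensor\sigma}\,m(x\tensor z)}_\aB$, what remains to be proved is the operator identity
\begin{equation*}
	(P_{1,\pi\tensor\bar\pi}\tensor\id_\rho)\circ(\id_\pi\tensor P_{\sigma,\bar\pi\tensor\rho}^\star)\;=\;P_{\rho,\pi\tensor\sigma},
\end{equation*}
and this is \emph{not} forced by adjointness, contrary to your claim that ``every map involved is the genuine Hilbert-module adjoint of a map that already appears on the other side'': the map $(\id_\pi\tensor P_{\sigma,\bar\pi\tensor\rho}^\star)$ on the left is built from the intertwiner space $\Hom(V_\sigma,\bar V_\pi\tensor V_\rho)$ together with the duality morphism $w:\C\to V_\pi\tensor\bar V_\pi$, whereas $P_{\rho,\pi\tensor\sigma}$ on the right is built from $\Hom(V_\rho,V_\pi\tensor V_\sigma)$. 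Their equality is Frobenius reciprocity: one must verify that $v\mapsto\sqrt{d_\pi d_\rho/d_\sigma}\,(\one_\pi\tensor v)^*(w\tensor\one_\rho)$ carries an orthonormal basis of intertwiners $V_\sigma\to\bar V_\pi\tensor V_\rho$ to an orthonormal basis of intertwiners $V_\rho\to V_\pi\tensor V_\sigma$ --- orthonormality via $w^*(\one_\pi\tensor T)w=\tfrac1{d_\pi}\Tr(T)$, completeness via the equality of dimensions of the two intertwiner spaces --- and the normalization $\sqrt{d_\pi d_\rho/d_\sigma}$ has to be computed; it does not cancel by formal adjointness. Your plan to ``avoid choosing bases'' therefore cannot be carried out as written for this step, and the relations~\eqref{eq:intertw_rel_m} you invoke to ``recombine the isotypic pieces'' do not supply the missing identity (in the paper, \eqref{eq:intertwining_rel} is only used to commute $L_x^\star$ and the projections past the multiplication maps). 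This Frobenius-reciprocity computation is the one genuinely representation-theoretic input of the proof, and it is absent from your outline.
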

\begin{proof}
	Let $x \in \aB \tensor \End(V_\pi)$, $y \in \aB \tensor \End(V_\rho)$, and \mbox{$z \in \aB \tensor \End(V_\sigma)$} with $\pi, \rho, \sigma \in \hat G$. Since $(\gamma,\omega)$ is a factor system, the multiplication maps satisfy \eqref{eq:assoc+} of Lemma~\ref{lem:assoc+}. Then, writing $\tilde z := P_{\sigma, \bar\pi \tensor \rho}^\star(z)$ for short, we obtain
	\begin{align*}
		\scal{J(x) \bullet y, z}_\aB
		&=
		\scal{ m_{\bar\pi, \rho} \bigl( J(x) \tensor y \bigr), \; \tilde z}_\aB
		= 
		\scal{m_{\bar\pi, \rho} \bigl( L_x^\star m_{\pi, \bar\pi}^\star(p_\pi) \tensor y \bigr), \; \tilde z}_\aB
		\\
		&=
		\scal{ L_x^\star (\id_\pi \tensor m_{\bar\pi, \rho}) (m_{\pi, \bar\pi}^\star \tensor \id_\rho)(p_\pi \tensor y), \; \tilde z}_\aB
		\\
		&\overset{\eqref{eq:assoc+}}=
		\scal{L_x^\star m_{\pi, \bar\pi \tensor \rho}^\star m_{\pi \tensor \bar\pi, \rho}(p_\pi \tensor y), \; \tilde z}_\aB
		\\
		&=
		\scal{m_{\pi \tensor \bar\pi, \rho}(p_\pi \tensor y), \; m_{\pi, \bar\pi\tensor \rho}(x \tensor \tilde z)}_\aB		.
		\\
		&\overset{\eqref{eq:intertwining_rel}}= 
		\scal{(P_{\id, \pi \tensor \bar\pi}^\star \tensor \id_\rho)(y), \; (\id_\pi \tensor P_{\sigma, \bar\pi \tensor \rho}^\star) m(x \tensor z)}_\aB.
	\end{align*}
	We fix an isometric intertwiner $w:\C \to V_{\pi \tensor \bar\pi}$ and we choose an arbitrary orthonormal bases of intertwiners $v_1, \dots, v_m:V_\sigma \to \bar V_\pi \tensor V_\rho$. Then $u_k := \sqrt{\frac{d_\pi d_\rho}{d_\sigma}}(\one_\pi \tensor v_k)^*(w \tensor \one_\rho)$ for $1 \le k \le m$ form an orthonormal basis of intertwiners from $\rho$ to $\pi \tensor \sigma$. 
	Orthonormality follows from the fact that for every $T \in  \End(\bar V_\pi)$ we have $w^*(\one_\pi \tensor T) w = \tfrac{1}{d_\pi} \Tr(T)$. Completeness holds because the dimension of the intertwiner space from $\rho$ to $\pi \tensor \sigma$ coincides with the dimension of the intertwiner space from $\sigma$ to $\rho \tensor \bar \pi$.
	With this intertwiners we find $P_{\rho, \pi \tensor \sigma} = (P_{\id, \pi \tensor \bar\pi} \tensor \id_\rho) (\id_\pi \tensor P_{\sigma, \bar\pi \tensor \rho}^\star)$. Continuing the above computation, we obtain 
	\begin{equation*}
		\scal{J(x) \bullet y, z}_\aB = \scal{y, P_{\rho, \pi \tensor \sigma} m(x \tensor z)}_\aB = \scal{y, x \bullet z}_\aB.
	\end{equation*}
	The assertion for arbitrary $x,y,z \in A$ then follows by extending (anti-)linearly.
\end{proof}

\begin{cor}
	\label{cor:P0_product}
	Let $P_0:A \to \aB$ denote the orthogonal projection onto the direct summand $\aB \subseteq A$ corresponding to the trivial representation. Then for all $x,y \in A$ we have
	\begin{equation*}
		\lscal\aB{x,y} = P_0 \bigl( J(x) \bullet y \bigr).
	\end{equation*}
\end{cor}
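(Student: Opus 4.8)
The plan is to derive this as a direct corollary of Theorem~\ref{thm:inner_product} by specializing the third variable to the unit of $\aB$. First I would recall from the discussion following the definition of the product $\bullet$ (and from Remark~\ref{rmk:subalg_B}) that, for $z \in \aB$, the product $x \bullet z$ coincides with the right $\aB$-action; in particular $x \bullet \one_\aB = x$ for every $x \in A$. Substituting $z := \one_\aB$ into the identity of Theorem~\ref{thm:inner_product} then gives
\begin{equation*}
	\scal*{J(x) \bullet y, \; \one_\aB}_\aB = \scal*{y, \; x \bullet \one_\aB}_\aB = \scal*{y, \; x}_\aB
\end{equation*}
for all $x, y \in A$.

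Next I would evaluate the left-hand side directly. Writing $w := J(x) \bullet y$ and decomposing $w = \sum_{\pi \in \hat G} w_\pi$ into its homogeneous components $w_\pi \in \aB \tensor \End(V_\pi)$, orthogonality of the summands of $A$ gives $\scal*{w, \one_\aB}_\aB = \scal*{w_1, \one_\aB}_\aB$, where $w_1 = P_0(w)$ is the component in the trivial representation. On the summand $\aB$ (where $d_\pi = 1$ and $\End(V_\pi) = \C$) the $\aB$-valued inner product reduces to $\scal*{a,b}_\aB = a^* b$, so $\scal*{w_1, \one_\aB}_\aB = w_1^* = P_0(w)^*$. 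Combining this with the previous display yields $P_0\bigl(J(x) \bullet y\bigr)^* = \scal*{y,x}_\aB$, and taking adjoints, using the conjugate symmetry $\scal*{y,x}_\aB^* = \scal*{x,y}_\aB$ of the inner product, gives $P_0\bigl(J(x) \bullet y\bigr) = \scal*{x,y}_\aB$, as claimed. Since Theorem~\ref{thm:inner_product} and the computation above already hold for arbitrary $x, y \in A$, no separate (anti)linear extension step is needed.

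The argument is entirely formal, so I do not expect a genuine obstacle. The only points requiring a little care are the bookkeeping of adjoints — the natural output of the computation is $P_0(w)^*$ rather than $P_0(w)$, which is why one passes to adjoints at the end — and the elementary observation that on the trivial-representation summand the module inner product collapses to $a^* b$.
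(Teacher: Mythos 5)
Your proof is correct and takes essentially the same route as the paper: both specialize Theorem~\ref{thm:inner_product} with the unit $\one_\aB$ in one slot, use $x \bullet \one_\aB = x$, and observe that pairing against $\one_\aB$ extracts the $P_0$-component. The only cosmetic difference is that the paper places $\one_\aB$ in the first argument of the inner product (so no final adjoint is needed), whereas you substitute $z = \one_\aB$ and pass to adjoints via conjugate symmetry at the end.
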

\begin{proof}
	The element $\one_\aB$ is a unit for the multiplication of $A$ and fixed by the orthogonal projection $P_0$. Hence by Theorem~\ref{thm:inner_product} we obtain
	\begin{equation*}
		\scal{x,y}_\aB
		= \scal{\one_\aB, J(x) \bullet y}_\aB
		= \scal*{\one_\aB, P_0 \bigl( J(x) \bullet y \bigr)}_\aB
		= P_0 \bigl( J(x) \bullet y \bigr).
		\qedhere
	\end{equation*}
\end{proof}
\begin{rmk}
	\label{rmk:unique_J}
	We would like to mention that by Corollary~\ref{cor:P0_product} the element $J(x)$ can be equivalently be characterized as the unique element of $A$ satisfying $\scal{J(x),y}_\aB = P_0(x \bullet y)$ for all $y \in A$.
\end{rmk}

\begin{cor}
	\label{cor:involutive_alg}
	$A$ is an involutive algebra, \ie, for all $x,y \in A$ we have
	\begin{align*}
		J\bigl( J(x) \bigr) &= x,
		&
		J(x \bullet y) &= J(y) \bullet J(x).
	\end{align*}
\end{cor}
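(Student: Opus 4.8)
The plan is to reduce everything to the characterisation of $J$ recorded in Remark~\ref{rmk:unique_J}, namely that for each $x \in A$ the element $J(x)$ is the \emph{unique} element of $A$ with $\scal{J(x),y}_\aB = P_0(x \bullet y)$ for all $y \in A$; uniqueness is available because the $\aB$-valued inner product is definite on each summand $\aB \tensor \End(V_\pi)$, hence on the algebraic direct sum $A$. For both identities I would simply verify that the proposed right-hand side has the defining property of the left-hand side and then invoke this uniqueness.

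For the idempotency $J\bigl(J(x)\bigr) = x$: apply the characterisation with $J(x)$ in place of $x$. It says that $J\bigl(J(x)\bigr)$ is the unique $z \in A$ with $\scal{z,y}_\aB = P_0\bigl(J(x) \bullet y\bigr)$ for all $y \in A$. But Corollary~\ref{cor:P0_product} asserts precisely $\scal{x,y}_\aB = P_0\bigl(J(x)\bullet y\bigr)$ for all $y$, so $z = x$ does the job, and uniqueness forces $J\bigl(J(x)\bigr) = x$.

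For the anti-multiplicativity $J(x \bullet y) = J(y) \bullet J(x)$: by Remark~\ref{rmk:unique_J} applied to $x \bullet y$ it suffices to check that $\scal{J(y)\bullet J(x),\,w}_\aB = P_0\bigl((x \bullet y)\bullet w\bigr)$ for all $w \in A$. Starting on the left, Theorem~\ref{thm:inner_product} (used once, to move $y$ across the inner product) gives $\scal{J(y)\bullet J(x),\,w}_\aB = \scal{J(x),\, y\bullet w}_\aB$; then the defining property of $J(x)$ from Remark~\ref{rmk:unique_J}, with the test element $y\bullet w$, gives $\scal{J(x),\,y\bullet w}_\aB = P_0\bigl(x\bullet(y\bullet w)\bigr)$; and finally associativity of the product $\bullet$ on $A$ rewrites this as $P_0\bigl((x\bullet y)\bullet w\bigr)$, as required.

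I do not expect a genuine obstacle: all the analytic content sits in Theorem~\ref{thm:inner_product} (and, upstream of it, in Lemma~\ref{lem:assoc+}), so this corollary is a purely formal bookkeeping consequence. The only care needed is feeding the right elements into the right slots of Theorem~\ref{thm:inner_product} and of the uniqueness statement, and invoking associativity at the very last step. If one prefers to avoid Remark~\ref{rmk:unique_J} and argue from the definitions of $\bullet$ and $J$ directly, the same two computations go through with $x,y$ restricted to single summands $\aB \tensor \End(V_\pi)$ and then extended (anti)linearly, but this merely adds indexing.
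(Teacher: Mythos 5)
Your proposal is correct and follows essentially the same route as the paper: both arguments are purely formal consequences of Theorem~\ref{thm:inner_product} (equivalently, Corollary~\ref{cor:P0_product} and Remark~\ref{rmk:unique_J}), associativity of $\bullet$, and the definiteness of the $\aB$-valued inner product. Your appeal to the uniqueness statement of Remark~\ref{rmk:unique_J} is just a repackaging of the paper's closing step ``the inner product separates points,'' so the two proofs coincide in substance.
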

\begin{proof}
	By applying Theorem~\ref{thm:inner_product} twice we get
	\begin{gather*}
		\lscal\aB{J \bigl( J(x) \bigr), \; z}
		= \scal{\one_\aB, J(x) \bullet z}_\aB 
		= \scal{x,z}_\aB
		\shortintertext{and}
		\lscal\aB{J(x \bullet y), \; z}
		= \lscal\aB{\one_\aB, \; x \bullet y \bullet z}
		= \lscal\aB{J(x), \; y \bullet z}
		= \lscal\aB{J(y) \bullet J(x), \; z}
	\end{gather*}
	for all $z \in A$. Since the inner product separates points, this yields the assertion.
\end{proof}

\subsection{Construction of Free Actions}
\label{constr free action}

Having the algebra $A$ and the involution on $A$ in hands, the construction of the C\Star al\-ge\-bra and the cleft action follows the outline presented in Section~\ref{sec:decomp}. We consider $A$ as a right pre-Hilbert $\aB$-module. The inner product is positive definite but, unless $G$ is finite, $A$ is not closed with respect to the induced norm
\begin{equation*}
	\norm{x}_2 := \norm{\lscal\aB{x,x}}^{1/2}_{\mathrm{op}} = \norm{P_0 \bigl( J(x) \bullet x \bigr)}_{\mathrm{op}}^{1/2},
	\qquad \qquad x \in A.
\end{equation*}
We denote by $\bar A$ the completion with respect to this norm. Equivalently, the correspondence $\bar A$ over $\aB$ is the direct sum of the previously discussed correspondences $\aB \tensor \End(V_\pi)$ with \mbox{$\pi \in \hat G$}.

Theorem~\ref{thm:inner_product} allows us to extend each left multiplication by $x \in A$ to an adjointable map on the completion $\bar A$, which we again denote by $\lambda[x]:\bar A \to \bar A$. Then the map 
\begin{equation*}
	\lambda: A \to \End(\bar A), 
	\quad 
	x \mapsto \lambda[x]
\end{equation*}
is an representation of the \Star algebra $A$ by adjointable operators on~$\bar A$. The vector $\one_\aB \in \bar A$ is clearly cyclic and separating for the subalgebra $\lambda(A)$. In particular, $\lambda$ is faithful and hence isometric on $\aB \subseteq A$.
We denote by $\aA$ the C\Star algebra generated by the range of~$\lambda$. 
To simplify the notation we identify the algebra $(A, \bullet)$ with the subalgebra $\lambda(A) \subseteq \aA$.  For sake of clarity, we extend the notation for the \Star algebra $A$ to the C\Star algebra $\End(\bar A)$, that is, for elements $x,y \in \End(\bar A)$ we write \mbox{$x \bullet y$} for their product and for $x \in \aA$ we write $J(x)$ for its adjoint.  Since for $x \in A$ we have $\norm{\lambda[x]}^2 \ge \norm{\lambda[x]\one_\aB}_2^2 = \norm{x}_2^2$, we may regard the C\Star algebra $\aA$ as a subset of~$\bar A$, so that $A \subseteq \aA \subseteq \bar A$.

On each direct summand $\aB \tensor \End(V_\rho) \subseteq \bar A$, $\pi \in \hat G$, we have a continuous action of $G$ by the usual right multiplication
\begin{equation*}
	U_g(x) := x \, (\one_\aB \tensor \pi_g^*)
\end{equation*}
for $g \in G$ and $x \in \aB \tensor \End(V_\pi)$. For each $g\in G$ the map $U_g$ is unitary on $\aB \tensor \End(V_\rho)$ with respect to the $\aB$-valued inner product. Taking direct sums and continuous extension, we obtain a strongly continuous unitary representation $g \mapsto U_g \in \End(\bar A)$ of the group $G$ on the correspondence $\bar A$. We denote by $\alpha = (\alpha_g)_{g\in G}$ the associated automorphism group on $\End(\bar A)$, \ie, we put
\begin{equation*}
	\alpha_g(x) := U_g \bullet x \bullet U_g^\star, 
	\qquad \qquad
	x \in \End(\bar A).
\end{equation*}
For each element $x \in A \subseteq \aA$ we have $\alpha_g(x) = U_g(x)$, that is, on the algebra $\aA \subseteq \bar A$ the actions $\alpha = (\alpha_g)_{g \in G}$ and $U = (U_g)_{g \in G}$ coincide. Since $A$ is dense in~$\aA$, it follows that $g \mapsto \alpha_g(x)$ is continuous for every $x \in \aA$. Summarizing, we have constructed a C\Star dynamical system $(\aA, G, \alpha)$. 

\begin{thm}
	The C\Star dynamical system $(\aA, G, \alpha)$ is weakly cleft with fixed point algebra $\aB$ and factor system $(\gamma, \omega)$.
	\label{thm:factor_2_sys}
\end{thm}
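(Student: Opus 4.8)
The plan is to verify the three assertions in turn: that $\aB$ is the fixed point algebra, that the system is weakly cleft, and that the factor system attached to a suitable choice of non-degenerate isometries is exactly $(\gamma,\omega)$. Throughout I use the concrete description of $\bar A$ as the Hilbert $\aB$-module completion of $\bigoplus_{\pi\in\hat G}\aB\tensor\End(V_\pi)$ and the fact, recorded in Section~\ref{constr free action}, that on $\aA\subseteq\bar A$ the action $\alpha$ coincides with $U$, which acts on the $\pi$-summand by $x\mapsto x(\one_\aB\tensor\pi_g^*)$. For the fixed point algebra, Schur orthogonality gives $\int_G(\one_\aB\tensor\pi_g^*)\,dg=0$ for every nontrivial $\pi\in\hat G$, so $\int_G U_g\,dg$ is the orthogonal projection of $\bar A$ onto the trivial summand $\aB$; since this integral computes the conditional expectation $P_0$, we get $\aA^G=P_0(\aA)\subseteq\aB$, and the reverse inclusion is clear, so $\aA^G=\aB$.

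For weak cleftness, fix $\pi\in\hat G$. Under $\End(V_\pi)\cong V_\pi\tensor\bar V_\pi$ the action $U$ on the summand $\aB\tensor\End(V_\pi)$ becomes $\id\tensor\id\tensor\bar\pi$, so this summand is entirely $\bar\pi$-isotypic, while (again by Schur orthogonality) no other summand carries $\bar\pi$-isotypic vectors; hence $A(\bar\pi)=\aB\tensor\End(V_\pi)$, a free right Hilbert $\aB$-module of rank $d_\pi^2$. Now invoke the canonical isomorphism of correspondences $\varphi_\pi\colon A(\bar\pi)\to A_2(\pi)$, $y\mapsto d_\pi\int_G\alpha_g(y)\tensor\pi_g\,dg$, from Section~\ref{sec:decomp}. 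A short computation with Schur orthogonality shows that $\varphi_\pi$ intertwines the natural right action of $\one_\aB\tensor\End(V_\pi)$ on its domain with the right action of $\one_\aA\tensor\End(V_\pi)$ on $A_2(\pi)$; being moreover right $\aB$-linear, $\varphi_\pi$ therefore satisfies $\varphi_\pi(x)=s_\pi\,x$ with $s_\pi:=\varphi_\pi(\one_\aB\tensor\one_\pi)\in A_2(\pi)$. As $\varphi_\pi$ is an isomorphism of right Hilbert $\aB$-modules, Lemma~\ref{lem:cleft_isom} forces $s_\pi$ to be a non-degenerate isometry, so $(\aA,G,\alpha)$ is weakly cleft.

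It remains to compute the factor system $(\gamma',\omega')$ attached via Equations~\eqref{eq:gamma_concrete} and~\eqref{eq:u_concrete} to the isometries $s_\pi$ just constructed (note $s_1=\one_\aB$ since $\varphi_1=\id$) and to match it with $(\gamma,\omega)$. For the coaction part it suffices, by Remark~\ref{rmk:gamma_unique}, to check $(b\tensor\one)s_\pi=s_\pi\,\gamma_\pi(b)$; this follows from the intertwining property of $\varphi_\pi$ once one observes that the left $\aB$-action on $A_2(\pi)$, transported through $\varphi_\pi$, is multiplication by $\gamma_\pi(b)$ on the summand $\aB\tensor\End(V_\pi)$ — which is precisely the left action built into the construction of Section~\ref{sec:associativity}. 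For the cocycle part one uses $\omega'(\pi,\rho)=s_{\pi\tensor\rho}^*\,(s_\pi)_{12}\,(s_\rho)_{13}$ together with the definition of the product $\bullet$ via $m_{\pi,\rho}(x\tensor y)=\omega(\pi,\rho)\,\gamma_\rho(x)\,y$ and the projections $P_{\sigma,\pi\tensor\rho}$: transporting through $\varphi_\pi$, $\varphi_\rho$, $\varphi_{\pi\tensor\rho}$ and comparing with Equation~\eqref{eq:right_mult} (which the construction reproduces) identifies $\omega'(\pi,\rho)$ with $\omega(\pi,\rho)$.

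I expect the main obstacle to be this last compatibility: showing that the canonical correspondence isomorphisms $\varphi_\pi$ intertwine the hand-built multiplication $\bullet$ on $\aA$ with the intrinsic multiplication $m_{\pi,\rho}(x\tensor y)=x_{12}\,y_{13}$ between generalized isotypic components. All the ingredients are present by design, but the verification must simultaneously reconcile the definitions of $\bullet$, of the multiplication maps, of the projections $P_{\sigma,\pi\tensor\rho}$ and of $\varphi_\pi$, with careful bookkeeping of the several tensor legs and of the two involutions in play (the structural involution $J$ on $\aA$ versus the naive adjoint on each $\aB\tensor\End(V_\pi)$). Everything else reduces to Schur orthogonality and routine algebra.
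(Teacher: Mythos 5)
Your proposal follows essentially the same route as the paper: identify the isotypic components of $\bar A$ via Schur orthogonality to get the fixed point algebra, obtain the non-degenerate isometries $s_\pi$ from the canonical isomorphism $A(\bar\pi)\cong A_2(\pi)$ combined with Lemma~\ref{lem:cleft_isom}, and then verify $(b\tensor\one)s_\pi=s_\pi\gamma_\pi(b)$ and $s_\pi\bullet s_\rho=s_{\pi\tensor\rho}\bullet\omega(\pi,\rho)$ to match the factor system. The final bookkeeping you flag as the main obstacle is likewise left to the reader in the paper, so your plan is complete at the same level of detail.
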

\begin{proof}
	In the first part of the proof we will show that for the action $U = (U_g)_{g \in G}$ on $\bar A$ the isotypic component of $\pi \in \hat G$ is given by the direct summand $\aB \tensor \End(\bar V_\pi)$. This verifies in particular that $\aB$ is the fixed point space. By Theorem~\ref{thm:inner_product} it follows that the canonical $\aB$-valued inner product on $\aA$ coincides with the inner product on the larger space $\bar A$, that is,
	\begin{equation*}
		\int_G \alpha_g \bigl(J(x) \bullet y \bigr) \;dg 
		= P_0 \bigl( J(x) \bullet y) 
		\overset{\ref{thm:inner_product}}= \scal{x,y}_\aB.
	\end{equation*}
	for all $x,y  \in \aA$.
	Using Lemma~\ref{lem:cleft_isom} we conclude that the dynamical system $(\aA, G, \alpha)$ is weakly cleft. In the second part of the proof we confirm that $(\gamma, \omega)$ is indeed a factor system of $(\aA, G, \alpha)$. 
	\begin{enumerate}
	\item 
		We consider the action $U = (U_g)_{g \in G}$ on the correspondence $\bar A$ over $\aB$. Obviously, all elements of $\aB \subseteq \bar A$ are fixed by the action. For an element $x \in \aB \tensor \End(V_\pi)$ in a direct summand of $\bar A$ with $\id \neq \pi \in \hat G$ the integral $\int_G U_g(x) \; dg = \int_G x \pi_g^* \; dg$ vanishes because $\pi$ is irreducible. Taking linear combinations and continuous extensions then shows that $\int_G U_g(x) \; dg = P_0(x)$ for all $x \in \bar A$. Adapting the arguments, we obtain for every $\pi \in \hat G$ and $x \in \bar A$
		\begin{equation*}
			d_\pi \int_G \Tr(\pi_g^*) U_g(x) \; dg = P_{\bar\pi}(x),
		\end{equation*}
		where $P_{\bar\pi}:\bar A \to \aB \tensor \End(\bar V_\pi)$ denotes the orthogonal projection onto the direct summand $\aB \tensor \End(\bar V_\pi) \subseteq \bar A$. We conclude that $\aB \tensor \End(\bar V_\pi)$ is the isotypic component of~$\pi$ in~$\bar A$ and hence also in $\aA \subseteq \bar A$.
	\item
		It remains to verify that $(\gamma, \omega)$ is a factor system of $(\aA, G, \alpha)$. According to the first part of the proof the isotypic component of $\bar\pi \in \hat G$ in $\aA$ is given by $\aB \tensor \End(V_\pi)$. As a right $\aB$-module $\aB \tensor \End(V_\pi)$ is generated by the element $\one_\aB \tensor \one_\pi$ of norm~1. The natural isomorphism of the isotypic and the generalized isotypic component together with Lemma~\ref{lem:cleft_isom} therefore provides us with the non-degenerate isometry $s_\pi \in A_2(\pi) \subseteq \aA \tensor \End(V_\pi)$ given by
		\begin{equation*}
			s_\pi 
			= d_\pi \int_G \one_\aB \tensor \pi_g^* \tensor \pi_g \; dg
			= (\one_\aB \tensor F),
		\end{equation*}
		where $F \in \End(V) \tensor \End(V)$ denotes the tensor flip. This family $s = (s_\pi)_{\pi \in \hat G}$ of isometries then gives rise to a factor system $(\tilde \gamma, \tilde \omega)$ by Equations~\eqref{eq:gamma_concrete} and~\eqref{eq:u_concrete}. For each $\pi \in \hat G$ the homomorphism $\tilde\gamma_\pi$ is uniquely determined by the left action of $\aB$ on $s_\pi$. For convenience we also write $\bullet$ for the multiplication on all $\aA \tensor \End(V_\pi)$, $\pi \in \hat G$. Then the left action of $\aB$ on $s_\pi$ is given by
		\begin{align*}
			(b \tensor \one_\pi) \bullet s_\pi 
			&= d_\pi \int_G  \bigl( b \bullet (\one_\aB \tensor \pi_g^*) \bigr) \tensor \pi_g 
			= d_\pi \int_G \bigl( \gamma_\pi(b) (\one_\aB \tensor \pi_g^*) \bigr) \tensor \pi_g
			\\
			&= \bigl( \gamma_\pi(b) \tensor \one_\pi \bigr) (\one_\aB \tensor F)
			= s_\pi (\one_\aB \tensor F) \bigl( \gamma_\pi(b) \tensor \one_\pi \bigr) (\one_\aB \tensor F)
			\\
			&= s_\pi \bullet \gamma_\pi(b)
		\end{align*}
		for every $b \in \aB$. Consequently, $\tilde \gamma_\pi = \gamma_\pi$. With some algebra involving the multiplication $\bullet$, which we leave to the reader, it is then straightforward to show that we also have
		\begin{equation*}
			s_\pi \bullet s_\rho
			= s_{\pi \tensor \rho} \bullet \omega(\pi,\rho)
		\end{equation*}
		for all $\pi, \rho \in \hat G$. This proves $\tilde \omega(\pi,\rho) = s_{\pi \tensor \rho}^* \bullet s_\pi \bullet s_\rho = \omega(\pi,\rho)$. Summarizing, we have shown that $(\gamma, \omega)$ is indeed the factor system associated with the isometries $s_\pi$, $\pi \in \hat G$, which finishes the proof.
		\qedhere
	\end{enumerate}
\end{proof}

This concrete representation theorem of weakly cleft actions finally allows us as a corollary to complete the proof of Corollary~\ref{cor:ergodic_subsyst}, which we asserted in Section~\ref{sec:factor_sys}. For convenience we repeat the relevant statement as a reminder in a slightly more general form.
\begin{cor}
	Let $(\aA, G, \alpha)$ be a C\Star dynamical system and $\aB_0 \subseteq \aB$ a unital C\Star sub\-al\-ge\-bra of fixed points. Furthermore, let $(\gamma, \omega)$ be a factor system such that 
	\begin{align*}
		\gamma_\pi(\aB_0) &\subseteq \aB_0 \tensor \End(V_\pi),
		&
		\omega(\pi,\rho)  &\in \aB_0 \tensor \End(V_\pi \tensor V_\rho),
	\end{align*}
	for all $\pi, \rho\in \hat G$. Then there is an $\alpha$-invariant subalgebra $\aA_0 \subseteq \aA$ such that the restricted dynamical  system $(\aA_0, G, \alpha|_{\aA_0})$ is weakly cleft and $\aA$ is generated by $\aA_0$ and $\aB$.
	\label{cor:remaining}
\end{cor}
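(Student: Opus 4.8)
The plan is to realise $\aA_0$ as the closed $\ast$-subalgebra of $\aA$ spanned by the ``$\aB_0$-graded part''. Since $(\aA,G,\alpha)$ admits a factor system it is weakly cleft, so by Theorems~\ref{thm:factor_sys_concrete} and~\ref{thm:factor_2_sys} I would first reduce to the explicit model of Section~\ref{sec:construction}: the two factor systems in play being identical, the conjugating unitaries in Theorem~\ref{thm:factor_sys_concrete} may all be taken to be $\one$, so the equivalence fixes $\aB$ pointwise and we may assume $(\aA,G,\alpha)$ is literally the system built in Section~\ref{sec:construction} from $(\gamma,\omega)$. Thus $A=\bigoplus_{\pi\in\hat G}\aB\tensor\End(V_\pi)$ carries the product $\bullet$, the involution $J$ and the $G$-action $\alpha_g=U_g$ of that section, and $\aB_0\subseteq\aB=\aB\tensor\End(V_1)$ sits inside $A\subseteq\aA$.

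Next I would put $A_0:=\bigoplus_{\pi\in\hat G}\aB_0\tensor\End(V_\pi)$ and check that it is a unital, $\alpha$-invariant $\ast$-subalgebra of $(A,\bullet,J)$. Closure under $\bullet$ follows from $m_{\pi,\rho}(x\tensor y)=\omega(\pi,\rho)\,\gamma_\rho(x)\,y$ together with the hypotheses $\omega(\pi,\rho)\in\aB_0\tensor\End(V_\pi\tensor V_\rho)$, $\gamma_\rho(\aB_0)\subseteq\aB_0\tensor\End(V_\rho)$, and the fact that $P_{\sigma,\pi\tensor\rho}$ acts only on the $\End$-legs. Invariance under $\alpha$ holds because $U_g$ acts on the $\pi$-summand by $x\mapsto x(\one\tensor\pi_g^*)$, which preserves $\aB_0\tensor\End(V_\pi)$. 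For closure under $J$ I would invoke Remark~\ref{rmk:unique_J}: for $x\in A_0$ the element $J(x)$ is the unique one with $\scal{J(x),y}_\aB=P_0(x\bullet y)$ for all $y\in A$; since $x\bullet y\in A_0$ and $P_0(A_0)\subseteq\aB_0$ when $y\in A_0$, these inner products lie in $\aB_0$, and testing against $y=\one_\aB\tensor S$ forces the $\aB$-coefficients of $J(x)$ into $\aB_0$. Then $\aA_0:=\overline{\lambda(A_0)}\subseteq\aA$ is an $\alpha$-invariant C$^*$-subalgebra containing $\aB_0$.

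It then remains to identify the restricted system. Since $\aA_0$ is $\alpha$-invariant, $\aA_0^G=P_0(\aA_0)=\overline{P_0(\lambda(A_0))}=\aB_0$, as $P_0$ maps the $\pi$-summand of $A_0$ to $\aB_0$ if $\pi$ is trivial and to $0$ otherwise. For weak cleftness I would note that the non-degenerate isometries $s_\pi=\one_\aB\tensor F\in A_2(\pi)$ exhibited in the proof of Theorem~\ref{thm:factor_2_sys} have all their $\aA$-entries in $\aB_0\tensor\End(V_\pi)\subseteq A_0$, hence lie in $\aA_0\tensor\End(V_\pi)$; thus $s_\pi\in A_2^{\aA_0}(\pi)=A_2^{\aA}(\pi)\cap\bigl(\aA_0\tensor\End(V_\pi)\bigr)$, and since $s_\pi^*s_\pi=\one$ and $s_\pi s_\pi^*x=x$ already for $x\in A_2^{\aA}(\pi)$, the element $s_\pi$ is a non-degenerate isometry for $(\aA_0,G,\alpha|_{\aA_0})$. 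By Equations~\eqref{eq:gamma_concrete} and~\eqref{eq:u_concrete} this moreover gives $\aA_0$ the factor system $\bigl(\gamma_\pi|_{\aB_0},\omega(\pi,\rho)\bigr)$, as claimed in Corollary~\ref{cor:ergodic_subsyst}. Finally, $\aA$ is generated by $\aA_0$ and $\aB$ because already $A=\Span\bigl(A_0\bullet\aB\bigr)$: by Remark~\ref{rmk:subalg_B} the product of $\one_\aB\tensor T\in\aB_0\tensor\End(V_\pi)$ with $b\in\aB$ equals $b\tensor T$, and such elements span $\aB\tensor\End(V_\pi)$; applying the homomorphism $\lambda$ gives $\aA=\overline{\lambda(A)}\subseteq C^*(\aA_0\cup\aB)$.

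The step I expect to be the main obstacle is the closure of $A_0$ under the involution, i.e.\ ruling out that the $\ast$-operation of $\aA$ leaks out of the $\aB_0$-graded part; here the intrinsic characterisation of the adjoint in Remark~\ref{rmk:unique_J} (or, alternatively, tracing the construction of $J$ in Section~\ref{sec:associativity} through the two hypotheses on $(\gamma,\omega)$) seems essential, a naive componentwise argument being unavailable. A lesser point that needs care is the legitimacy of the reduction to the constructed model, namely that the equivalence furnished by Theorem~\ref{thm:factor_sys_concrete} may be taken to fix $\aB$ — which it can, precisely because the two factor systems coincide and the conjugating unitaries may all be chosen equal to $\one$.
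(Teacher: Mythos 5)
Your proposal is correct and follows essentially the same route as the paper: reduce via Theorems~\ref{thm:factor_sys_concrete} and~\ref{thm:factor_2_sys} to the model of Section~\ref{sec:construction}, set $A_0 = \bigoplus_{\pi \in \hat G} \aB_0 \tensor \End(V_\pi)$, verify it is an $\alpha$-invariant \Star subalgebra (with Remark~\ref{rmk:unique_J} handling the involution, exactly as the paper indicates), take its closure to get $\aA_0$, and obtain generation from $x \bullet b = b \tensor x$. Your explicit verification that the isometries $s_\pi = \one_\aB \tensor F$ land in $\aA_0 \tensor \End(V_\pi)$ is just a slightly more hands-on version of the paper's direct appeal to Theorem~\ref{thm:factor_2_sys} for the restricted factor system.
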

\begin{proof}%[Proof of Corollary~\ref{cor:ergodic_subsyst}, converse implication]
	By Theorem~\ref{thm:factor_2_sys} and Theorem~\ref{thm:factor_sys_concrete} we may assume that $\aA$ admits an $\alpha$\ndash invariant dense subalgebra of the form $A = \bigoplus_{\pi \in \hat G} \aB \tensor \End(V_\pi)$ with the multiplication
	\begin{equation*}
		x \bullet y = \sum_{\sigma \in \hat G} P_\sigma \bigl( \omega(\pi, \rho) \,  \gamma_\rho(x) \, y \bigr)
	\end{equation*}
	and the action $\alpha_g(x) = x (\one_\aB \tensor \pi_g^*)$ for all $x \in \aB \tensor \End(V_\pi)$, $y \in \aB \tensor \End(V_\rho)$, $g \in G$, and $\pi, \rho \in \hat G$. By the hypotheses on $(\gamma, \omega)$ the subset
	\begin{equation*}
		A_0 := \bigoplus_{\pi \in \hat G} \aB_0 \tensor \End(V_\pi) 
	\end{equation*}
	is an $\alpha$-invariant \Star subalgebra of~$A$ (\cf also Remark~\ref{rmk:unique_J}). Let $\aA_0 \subseteq \aA$ be the closure of $A_0$ with respect to the operator norm. By Theorem~\ref{thm:factor_2_sys}, the dynamical system restricted to $\aA_0$ is weakly cleft with the restriction of $(\gamma, \omega)$ to $\aB_0$ as factor system. The space $A$ is linearly generated by all products $x \bullet b = (b \tensor x)$ with $x \in \one_\aB \tensor \End(V_\pi)$, $\pi \in \hat G$, and $b \in \aB$. Since $A_0$ contains all elements $x \in \one_{\aB} \tensor \End(V_\pi)$, $\pi \in \hat G$, the algebra $A$ is generated by $A_0$ and $\aB$. Taking closures, $\aA$ is generated by $\aA_0$ and~$\aB$.
\end{proof}

\section*{Acknowledgment}

We would like to acknowledge the Center of Excellence in Analysis and Dynamics
Research (Academy of Finland, decision no.~271983 and no.~1138810) for supporting this research.

%\appendix

%\newpage
\bibliographystyle{abbrv}
\bibliography{short,part2}

\end{document}